
\documentclass[10pt,a4paper,preprint,review]{article}

\usepackage{amssymb}
\usepackage{amsmath}
\usepackage{amsthm}
\usepackage[margin=2.5cm]{geometry}





\def\Z{\mathbb{Z}}
\def\R{\mathbb{R}}
\def\C{\mathbb{C}}
\def\H{\mathbb{H}}
\def\F{\mathbb{F}}

\def\a{\alpha}
\def\b{\beta}

\def\d{\delta}

\def\k{\kappa}


\def\g{\mathfrak{g}}

\def\p{\mathfrak{p}}

\def\b{\mathfrak{b}}
\def\gl{\mathfrak{gl}}

\def\so{\mathfrak{so}}

\def\su{\mathfrak{su}}

\def\sp{\mathfrak{sp}}

\def\diff{\partial}
\def\codiff{\partial^*}

\def\Abdle{\mathcal{A}}
\def\Dbdle{\mathcal{D}}

\def\Gbdle{\mathcal{G}}
\def\Hbdle{\mathcal{H}}

\def\Tbdle{\mathcal{T}}
\def\Vbdle{\mathcal{V}}


\def\scal{\mathrm{scal}}

\def\W{\mathsf{W}}
\def\Cot{\mathsf{C}}

\def\:{\lrcorner}
\def\#{\sharp}

\def\tens{\otimes}

\def\dsum{\oplus}

\def\alt{\wedge}

\def\sym{\odot}

\def\prod{\times}

\def\isom{\cong}

\theoremstyle{plain}
\newtheorem{Theorem}{Theorem}[section]
\newtheorem{Lemma}[Theorem]{Lemma}

\newtheorem{Proposition}[Theorem]{Proposition}

\newtheorem*{TheoremA}{Theorem A}

\theoremstyle{definition}
\newtheorem{Definition}{Definition}[section]
\newtheorem{Remark}[Definition]{Remark}
\newtheorem{Example}[Definition]{Example}

\def\no{\noindent}


%
{\begin{list}{}%
         {\setlength{\leftmargin}{#1}}%
         \item[]%
}
{\end{list}}


\begin{document}

\title{On quaternionic contact Fefferman spaces}

\author{Jesse Alt}


\maketitle

\begin{abstract}
We investigate the Fefferman spaces of conformal type which are induced, via parabolic geometry, by the quaternionic contact (qc) manifolds introduced by O.Biquard. Equivalent characterizations of these spaces are proved: as conformal manifolds with symplectic conformal holonomy of the appropriate signature; as pseudo-Riemannian manifolds admitting conformal Killing fields satisfying a conformally invariant system of conditions analog to G. Sparling's criteria; and as the total space of a $SO(3)$- or $S^3$-bundle over a qc manifold with the conformally equivalent metrics defined directly by Biquard. Global as well as local results are acquired.
\end{abstract}





\section{Introduction}

Generalized Fefferman constructions, broadly considered, are functors from geometries of one type to those of another. In \cite{Fef76}, C. Fefferman introduced the prototypical example by associating to any strictly pseudo-convex domain $\Omega \subset \C^n$ a canonical conformal class of Lorentzian metrics on $S^1 \times \partial \Omega$. This was later generalized to a construction for abstract pseudo-convex (integrable) CR manifolds of hypersurface type (cf. e.g. \cite{BDS77}, \cite{Lee}, \cite{Baum06}).

Recently, it was observed in the context of parabolic geometry that Fefferman's construction fit into a broader family of natural constructions relating geometries of different types (cf. \cite{C06}). These are defined via Cartan geometries:
\begin{Definition}
Given a closed subgroup $P$ of a Lie group $G$, a \textit{Cartan geometry of type $(G,P)$} (or \textit{modelled on the homogeneous space $G/P$}) is given, for a smooth manifold $M$ of the same dimension as $G/P$, by a principal $P$ bundle $\pi: \Gbdle \rightarrow M$, equipped with a \textit{Cartan connection} $\omega$. That is, $\omega \in \Omega^1(\Gbdle,\g)$ satisfies:
\begin{align}
R_p^*(\omega) = \mathrm{Ad}(p^{-1}) &\circ \omega, \,\, \mathrm{for} \,\, \mathrm{all} \,\, p \in P; \label{ad-inv}\\
\omega(\tilde{X}) = X, \,\, \mathrm{for} \,\, \mathrm{any} \,\, X \in \p, \, \tilde{X} \,\, &\mathrm{its} \,\, \mathrm{fundamental} \,\, \mathrm{vector} \,\, \mathrm{field} \,\, \mathrm{on} \,\, \Gbdle; \label{fund-vfs}\\
\omega(u) : T_u\Gbdle \rightarrow \g \,\, \mathrm{is} \,\, \mathrm{a} \,\, \mathrm{linear}& \,\, \mathrm{isomorphism} \,\, \mathrm{for} \,\, \mathrm{all} \,\,u \in \Gbdle. \label{parallelism}
\end{align}
\end{Definition}
\no Given a Cartan geometry $(\Gbdle \rightarrow M, \omega)$ of some type $(G,P)$, the ``generalized Fefferman construction'' determines a Cartan geometry $(\tilde{\Gbdle} \rightarrow \tilde{M},\tilde{\omega})$ of type $(\tilde{G},\tilde{P})$ as well as a fibration $\tilde{M} \rightarrow M$ with fiber $(\tilde{P} \cap G)/P$, for any inclusion $G \hookrightarrow \tilde{G}$ and any closed subgroup $\tilde{P} \subset \tilde{G}$ such that $G$ acts locally transitively on $\tilde{G}/\tilde{P}$, and the intersection $\tilde{P} \cap G$ is contained in $P$ (for details, see section 3).\\

In particular, this can be applied to associate a conformal manifold to any non-degnerate CR manifold $(N,\Hbdle,J)$, as both are parabolic geometries geometries (see section 2), and well known to have associated Cartan geometries. We denote the types of these geometries by $(G^{cr},P^{cr})$ and $(G^{co},P^{co})$, respectively. Then up to quotient by a finite center, the pair $(G^{cr},P^{cr})$ is given, for a pseudo-convex CR manifold of (real) signature $(2p,2q)$, by $G^{cr} = SU(p+1,q+1)$, with $P^{cr}$ the subgroup stabilizing a complex null line; for a conformal manifold of signature $(2p+1,2q+1)$, we have $G^{co} = O(2p+2,2q+2)$, and $P^{co}$ is the stabilizer of a real null line. By classical results, the inclusion $G^{cr} \hookrightarrow G^{co}$ and the subgroups $P^{cr},P^{co}$ satisfy the necessary conditions, and the Fefferman construction associates to any pseudo-convex CR manifold $N$ of signature $(2p,2q)$ a conformal structure of signature $(2p+1,2q+1)$ on the total space of an $S^1$-bundle $F_{cr} \rightarrow N$. We refer to the result in the following as a \textit{(conformal) CR Fefferman space}.\\

A rather more involved problem, on the other hand, is to relate the naturally induced Cartan geometry on $F_{cr}$ with the canonical (normal) Cartan  geometry of the induced conformal structure, and to relate the latter to the conformal metric given in the classical construction of Fefferman. For this, see \cite{CG06a}, where A. \v{C}ap and R. Gover showed that the Cartan connection of the Fefferman construction agrees with the normal conformal Cartan connection precisely when the CR structure to begin with is integrable, and that it is equivalent to the original Fefferman metric. The results presented here for quaternionic contact Fefferman spaces parallel the work of \v{C}ap and Gover of CR Fefferman spaces.\\

Quaternionic contact manifolds (qc manifolds), introduced by O. Biquard in \cite{B00}, are another interesting example of a parabolic geometry. In many ways, they give the quaternionic analog to CR and conformal geometries, a view which is emphasized in the introductory chapter of \cite{B00} and which is suggested by the homogeneous model of its related Cartan geometry, as described in example \ref{qc parabolics} below. They are defined as follows (cf. definition 2.1 of \cite{IMV07}):

\begin{Definition} \label{Biquard's definition} A qc manifold is a $4n+3$ dimensional smooth manifold $M$ ($n \geq 1$), together with a codimension three distribution $\Dbdle$ with a $CSp(1)Sp(n)$ structure, i.e. we have:\\

i) a fixed conformal class $[g]$ of positive definite metrics on $\Dbdle$;\\

ii) a $2$-sphere bundle $\mathbb{Q}$ over $M$ of almost complex structures on $\Dbdle$, such that, locally we have $\mathbb{Q} = \{aI_1 + bI_2 + cI_3 \, \vert \, a^2+b^2+c^2=1\}$, where the almost complex structures $I_s$ satisfy the commutator relations of the imaginary quaternions: $I_1 I_2 = -I_2 I_1 = I_3$;\\

iii) $\Dbdle$ is locally the kernel of a one-form $\{\eta^s\} = (\eta^1,\eta^2,\eta^3)$ with values in $\R^3$ and the following compatibility condition holds:
\begin{align}
2g(I_s(u),v) = d\eta^s(u,v), \, \, \mathrm{for} \,\, s=1,2,3, \, \, \mathrm{and} \,\, u,v \in \Dbdle. \label{qc condition}
\end{align}
\no For $n=1$, the following \emph{integrability condition}, due to \cite{Duch}, is required:\\

iv) The one-form $\{\eta^s\}$ can be chosen such that $\{(d\eta^s)_{\vert_{\Dbdle}}\}$ form a local oriented orthonormal basis of $\Lambda^2_+(\Dbdle)^*$ and vector fields $\xi_1, \xi_2, \xi_3$ exist satisfying
\begin{align}
\xi_s \: \eta^r = \delta_{sr} \, \, \mathrm{and} \,\, (\xi_s \: d\eta^r)_{\vert_{\Dbdle}} = -(\xi_r \: d\eta^s)_{\vert_{\Dbdle}} \,\, \mathrm{for} \,\, s,r=1,2,3. \label{qc integrability}
\end{align}
\end{Definition}

We note that, for $n > 1$, Biquard proved the existence and uniqueness, given a local choice of a 1-form $\{\eta^s\}$ as in condition (iii), of vector fields $\{\xi_s\}$ satisfying (\ref{qc integrability}). In analogy with CR geometry, we call the one-form $\{\eta^s\}$ a \emph{qc contact form} and the $\{\xi_s\}$ its \emph{Reeb vector fields}. For fixed $g \in [g]$, Biquard proved the existence of a canonical affine connection with torsion, which may be seen as the analog of the Tanaka-Webster connection for a choice of pseudo-hermitian form on a pseudo-convex CR manifold. He used this to construct CR and conformal structures on a natural $S^2$ bundle, respectively on an $S^3$ bundle, over $M$, which are independent of the choice of $g$ (cf. theorems II.5.1 and II.6.1, respectively, of \cite{B00}). The resulting CR manifold is called the CR twistor space, and the conformal metric is called the Fefferman metric of the qc manifold (or qc Fefferman metric, for short).\\

We study here the generalized Fefferman construction, via parabolic geometry, which naturally associates to a $4n+3$ dimensional qc manifold a conformal manifold of signature $(4n+3,3)$, which we call the qc Fefferman space (cf. section 4.1). There are a number of reasons for interest in these constructions; here the emphasis lies on equivalent characterizations of the conformal Fefferman spaces which emphasize various aspects of their interesting geometry. Precisely, our main result is the following:

\begin{TheoremA} Let $(F,f)$ be a pseudo-Riemannian manifold of signature $(4n+3,3)$ for some $n \geq 1$. Then the following are equivalent:\\

i) The conformal manifold $(F,[f])$ is a qc Fefferman space.\\

ii) The conformal holonomy is reduced $Hol(F,[f]) \subseteq Sp(n+1,1)$, and the integrable rank 3 distribution induced on $F$ by this reduction is spanned by complete vector fields, with regular compact leaves, which all are either simply connected or have fundamental group $\Z_2$ and trivial monodromy.\\

iii) $(F,f)$ admits linearly independent, complete light-like conformal Killing fields $\Bbbk_1, \Bbbk_2 \in \mathfrak{X}(F)$ with $f(\Bbbk_1,\Bbbk_2) = 0$, such that:\\

\hspace{0,5cm} iii-a) $\Bbbk_i \: \mathsf{C} = \Bbbk_i \: \mathsf{W} = 0$ for $i=1,2$;\\

\hspace{0,5cm} iii-b) $\chi_{1,2} := \mathsf{P}(\Bbbk_1,\Bbbk_2) + \frac{1}{4}\lambda_1\lambda_2 - \frac{1}{4}\Bbbk_1(\lambda_2) - \frac{1}{4}\Bbbk_2(\lambda_1) = 0$;\\

\hspace{0,5cm} iii-c) $\beta_i := \mathsf{P}(\Bbbk_i,\Bbbk_i) - \frac{1}{2}\Bbbk_i(\lambda_i) + \frac{1}{4}\lambda_i^2 < 0$ for $i=1,2,3$ and $\beta_1\beta_2 = -\beta_3$; \\

\no (Where $\Bbbk_3$ is a conformal Killing field defined via $\Bbbk_1$ and $\Bbbk_2$, and the $\lambda_i$ are their conformal factors, see section 4.1 and proposition \ref{quaternionic Sparling} for precise definitions.) Moreover, $\Bbbk_3$ is likewise complete, the leaves generated by the span of $\Bbbk_1, \Bbbk_2, \Bbbk_3$ are regular and compact, and all are either simply connected or have fundamental group $\Z_2$ and trivial monodromy.\\

iv) $(F,f)$ is conformally equivalent to Biquard's Fefferman metric. That is, $F$ admits an $S^3$ or an $SO(3)$ fibration $p:F \rightarrow M$ over a qc manifold $(M,\Dbdle,\mathbb{Q},[g])$, and $\tilde{f} = e^{2\varphi}f$ is given by
\begin{align}
\tilde{f} = p^*g - 2\sum_{s=1}^3 p^*\eta^s \sym (\sigma^s + \frac{\scal}{32n(n+2)}p^*\eta^s), \label{qc Fefferman metric}
\end{align}
for some metric $g \in [g]$, where $\{\eta^s\}$ is a qc contact form compatible with $g$, $\{\sigma^s\}$ is the connection form induced on $p:F \rightarrow M$ by the Biquard connection of $g$, and $\scal$ is the qc-scalar curvature.
\end{TheoremA}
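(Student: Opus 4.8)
The plan is to prove Theorem A as a cycle of implications, $(i)\Rightarrow(ii)\Rightarrow(iii)\Rightarrow(iv)\Rightarrow(i)$, since each single arrow exploits a different structural description of the same object. The central conceptual object throughout is the reduction of the normal conformal Cartan connection to the subgroup $Sp(n+1,1)\subset O(4n+4,4)$; essentially everything is a matter of reading this reduction in four different languages (holonomy, distributions with integrability, conformally invariant field equations, and an explicit metric). I would begin by carefully setting up, in the statement-preparatory section referenced as 4.1, the precise meanings of $\Bbbk_3$, the conformal factors $\lambda_i$, and the tensors $\mathsf{P},\mathsf{W},\mathsf{C}$ (Schouten, Weyl, Cotton), so that conditions (iii-a)--(iii-c) have unambiguous content, and I would record Proposition \ref{quaternionic Sparling} as the technical bridge between the conformal Killing field data and the Cartan-geometric data.

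For $(i)\Rightarrow(ii)$ I would compute the holonomy reduction directly from the Fefferman construction: since the qc Fefferman space arises by the functorial procedure from the inclusion $G^{qc}\hookrightarrow G^{co}$ with $Sp(n+1,1)$ the relevant group, the induced conformal Cartan connection takes values in $\sp(n+1,1)$, forcing $Hol(F,[f])\subseteq Sp(n+1,1)$; the parallel complex/quaternionic structure tensors induced by this reduction then cut out a parallel rank-$3$ distribution whose integrability follows from the flatness of the relevant curvature components, and completeness and the leaf topology follow from the $S^3$- or $SO(3)$-fibration structure. For $(ii)\Rightarrow(iii)$ the strategy is to translate the parallel objects guaranteed by the holonomy reduction into conformal Killing fields via the standard tractor-to-tensor correspondence: a parallel section of the standard tractor bundle (or the adjoint tractor bundle) corresponds to a normal conformal Killing field, and the algebraic relations in $\sp(n+1,1)$ among the three almost complex structures produce exactly the quadratic relations (iii-b), (iii-c) and the insertion conditions (iii-a). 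The implication $(iii)\Rightarrow(iv)$ is where I would invert Sparling-type criteria: the conditions (iii-a)--(iii-c) are designed to be precisely what is needed to run an adapted-frame construction, building from $\Bbbk_1,\Bbbk_2,\Bbbk_3$ the fibration $p\colon F\to M$, descending a qc structure to $M$, and then verifying by a curvature-normalization computation that in the distinguished scale the metric $f$ is conformal to the explicit expression \eqref{qc Fefferman metric}, with the scalar-curvature coefficient $\tfrac{\scal}{32n(n+2)}$ emerging from matching the Schouten tensor to the Biquard connection data.

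The implication $(iv)\Rightarrow(i)$ should be the most direct: given Biquard's metric \eqref{qc Fefferman metric} explicitly, I would verify that the normal conformal Cartan connection of $[\,\tilde f\,]=[f]$ coincides with the one produced by the generalized Fefferman construction, which is the qc analogue of the \v{C}ap--Gover comparison in the CR case; integrability of the qc structure (condition (iv) of Definition \ref{Biquard's definition}, automatic for $n>1$ and imposed for $n=1$) is exactly what guarantees this agreement, closing the cycle. I expect the genuine obstacle to lie in $(iii)\Rightarrow(iv)$: the Sparling-type conditions are local-differential in character, so reconstructing a \emph{global} qc base with the correct $S^3$ or $SO(3)$ fibration requires controlling the leaf space of the span of $\Bbbk_1,\Bbbk_2,\Bbbk_3$, and here the topological hypotheses (regular compact leaves, simply connected or fundamental group $\Z_2$ with trivial monodromy) must be used carefully to distinguish the $S^3$ case from the $SO(3)$ case and to guarantee that the quotient is a manifold carrying a bona fide qc structure rather than merely a local one; a secondary difficulty is the precise curvature bookkeeping needed to pin down the exact conformal factor and the scalar-curvature normalization in \eqref{qc Fefferman metric}.
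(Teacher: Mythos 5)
There is a genuine gap, and it sits at the load-bearing joints of your cycle. Your step $(i)\Rightarrow(ii)$ claims that the holonomy reduction $Hol(F,[f])\subseteq Sp(n+1,1)$ follows ``directly'' from functoriality of the Fefferman construction. But $Hol(F,[f])$ is by definition the holonomy of the \emph{normal} conformal Cartan connection $\omega^{co}$ of $(F,[f])$, whereas the construction only hands you the induced connection $\overline{\omega^{qc}}$, which a priori is some conformal Cartan connection with holonomy in $G^{qc}$. To conclude anything about $Hol(F,[f])$ you must first prove $\overline{\omega^{qc}}=\omega^{co}$, i.e.\ that the induced connection is torsion-free and normal. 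That statement is theorem \ref{normality}, the technical core of the paper: it requires the algebraic lemmas \ref{del1 identity} and \ref{del2 identity} on codifferentials under graded inclusions, harmonic-curvature and Kostant BBW arguments, and the trace formulae of lemma \ref{trace formulae}. You do acknowledge a ``\v{C}ap--Gover comparison'' — but you place it in $(iv)\Rightarrow(i)$ and call that arrow ``the most direct,'' which inverts the actual difficulty: that comparison \emph{is} the normality theorem, and without it your $(i)\Rightarrow(ii)$ cannot start. (The same issue recurs in the converse direction: a holonomy reduction gives, via propositions \ref{local reductions} and \ref{global reductions}, a Cartan geometry of type $(G^{qc},P^{qc})$ inducing $\omega^{co}$, but one must again invoke normality of this reduction to identify it with the canonical geometry of a qc manifold.)

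The second problem is $(iii)\Rightarrow(iv)$, which you describe as an ``adapted-frame construction'' plus a ``curvature-normalization computation.'' The paper never proves this implication directly, and for good reason: extracting the explicit metric \eqref{qc Fefferman metric}, including the coefficient $\tfrac{\scal}{32n(n+2)}$, is not a frame computation from the Killing fields but goes through the Weyl-structure machinery of section 6 — proposition \ref{Fefferman Weyl structures} (inducing exact Weyl structures on the Fefferman space), the resulting algebraic formula \eqref{Feff metric algebraic formula}, and the computation of the qc Weyl connection $\nabla^{qc}=\nabla^{B}+\alpha^{qc}$ in proposition \ref{QC Weyl connection}, which is itself outsourced to the companion paper \cite{alt2}. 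In the paper's architecture the equivalences are proved pairwise — $(ii)\Leftrightarrow(iii)$ via proposition \ref{quaternionic Sparling} (your $(ii)\Rightarrow(iii)$ matches this part and is fine), $(i)\Leftrightarrow(ii)$ via theorem \ref{normality}, and $(i)\Leftrightarrow(iv)$ via section 6 — so each hard ingredient is isolated. Your cycle is logically admissible, but as written it hides both hard ingredients inside arrows that you estimate as easy, so the proposal as it stands does not constitute a proof strategy that could be completed without essentially reconstructing sections 5 and 6 of the paper.
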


After background material on parabolic geometries in section 2, and general results about Fefferman constructions and their ``converse'', Fefferman reductions, in section 3, we proceed with the proof of these equivalences. Section 4 derives some geometric properties of a conformal manifold with symplectic holonomy which will be needed in the sequel, and then proves the equivalence of (ii) and (iii). Section 5 proves the equivalence of (i) and (ii), which thanks to the general results of section 3, amounts to relating the normality conditions of the two types of parabolic geometries. In section 6, we give a general recipe -- under some mild algebraic assumptions -- for relating the Weyl structures of a parabolic geometry to Weyl structures of the related Fefferman space. In particular, we see how the form of the metric claimed in (iv) relates to the Weyl connection on $M$ determined by a choice of metric $g \in [g]$. The equivalence of (i) and (iv) then follows from computing the qc Weyl connection, a result of independent interest which is done in \cite{alt2}.\\

\no \textbf{Acknowledgements:} The core results here (section 5, and much of sections 3,4) are from my dissertation \cite{alt}, parts of which have been expanded and re-worked, with some errors corrected. I would therefore like to thank in particular my PhD advisor at Humboldt University, Helga Baum, for introducing me to conformal geometry, as well as the other two reviewers, Andreas \v{C}ap and Rod Gover, for useful input during and after the defense. At Humboldt, I was supported by the Deutsche Forschungsgemeinschaft (DFG) International Research Training Group ``Arithmetic and Geometry'' (GRK 870) during my PhD work. After that, I completed this text while supported by the DFG Schwerpunktprogramm 1154 ``Global Differential Geometry'', working on Helga Baum's project ``Lorentzian and conformal manifolds with special holonomy''.

\section{Parabolic geometry: background, examples and conventions}

A \emph{parabolic geometry} is a Cartan geometry of parabolic type $(G,P)$, i.e. $G$ is a real or complex semi-simple Lie group, and $P \subset G$ is a parabolic subgroup in the sense of representation theory. For our purposes, the following definition of parabolic subgroup is sufficient: the Lie algebra $\g$ of $G$ is endowed with a \emph{$\vert k \vert$-grading} for some natural number $k$, i.e. we have a vector space decomposition $\g = \g_{-k} \dsum \ldots \dsum \g_k$, the Lie bracket satisfies $[\g_i,\g_j] \subset \g_{i+j}$, and the sub-algebra $\g_- = \g_{-k} \dsum \ldots \dsum \g_{-1}$ is generated by $\g_{-1}$; the Lie algebra of the subgroup $P$ is given by $\p = \g_0 \dsum \ldots \dsum \g_k$, and $P$ is the subgroup preserving the associated filtration $\{\g^i\}$ of $\g$: $$P = \{ \, g \in G \,\, \vert \,\, \mathrm{Ad}(g)(\g^i) \subset \g^i := \g_i \dsum \ldots \dsum \g_k, \,\, \mathrm{for} \,\, \mathrm{all} \,\, -k \leq i \leq k \, \}.$$ An auxiliary subgroup of some importance, denoted by $G_0$, is defined as all elements whose adjoint action preserves the grading of $\g$. The Lie algebra of $G_0$ is the sub-algebra $\g_0$.\\

The systematic study of parabolic geometries goes back to the work of N. Tanaka, cf. \cite{Tan79}, while the literature over the past few decades has expanded considerably. Our purpose here is limited to recalling the main results from this theory which will be of importance in the sequel, and fixing notation. For proofs of the foundational results, see \cite{CS99}. For a survey of recent results and literature, see \cite{C06}.\\

For a general Cartan geometry $(\Gbdle,\pi,M,\omega)$, the curvature two-form $K^{\omega}$ is defined by the structure equation $$K^{\omega}(u,v) = d\omega(u,v) + [\omega(u),\omega(v)], \,\, \mathrm{for} \,\, u,v \in T_p\Gbdle, p \in \Gbdle.$$ Equivalently, one can consider the curvature function $\k^{\omega} \in C^{\infty}(\Gbdle, \alt^2\g^* \tens \g)$ defined for any $p \in \Gbdle, X,Y \in \g$, by $\k^{\omega}(p)(X,Y) = K^{\omega}(\omega_p^{-1}(X),\omega_p^{\-1}(Y))$. From properties (\ref{ad-inv}) and (\ref{fund-vfs}), it follows that $K^{\omega}$ is $P$-equivariant and horizontal, and we have a $P$-equivariant function $\k^{\omega} \in C^{\infty}(\Gbdle, \alt^2(\g/\p)^* \tens \g)$.\\

In the parabolic case, the isomorphism $\g/\p \isom \g_-$ induces a $P$-module structure on $\g_-$ and an isomorphism of $P$-modules, $\alt^n(\g/\p)^* \tens \g \isom C^n(\g_-,\g)$, where the latter space is the $n$th co-chain group in the complex computing $H^*(\g_-,\g)$, the Lie algebra cohomology of $\g_-$ with coefficients in $\g$. The differential $\diff: C^n(\g_-,\g) \rightarrow C^{n+1}(\g_-,\g)$ defining this cohomology is given, for $\varphi \in C^n(\g_-,\g)$ and $X_0, \ldots, X_n \in \g_-$, by:
\begin{align*}
(\diff \varphi)(X_0,\ldots,X_n) :=& \sum_{i=0}^n (-1)^i [X_i,\varphi(X_0,\ldots,\hat{X_i},\ldots,X_n)]\\
&+ \sum_{i < j} (-1)^{i+j} \varphi([X_i,X_j],X_0,\ldots,\hat{X_i},\ldots,\hat{X_j},\ldots,X_n).
\end{align*}

While the differential $\diff$ is only $G_0$-equivariant, but not $P$-equivariant, one can construct a codifferential $\codiff: C^n(\g_-,\g) \rightarrow C^{n-1}(\g_-,\g)$, adjoint to $\diff$ with respect to a positive definite inner product (hermitian in the complex case) on $C^*(\g_-,\g)$, which is $P$-equivariant -- see, e.g., 2.5, 2.6 and 2.13 of \cite{CS99}. In particular, we can define the \emph{Kostant Laplacian} (or quabla operator) $\Box = \diff \codiff + \codiff \diff$. This is a $G_0$-equivariant, self-adjoint endomorphism of the co-chain groups, determining a $G_0$-invariant Hodge decomposition $C^*(\g_-,\g) = \mathrm{im}(\diff) \dsum \mathrm{ker}(\Box) \dsum \mathrm{im}(\codiff)$, and a isomorphism of $G_0$-modules $\mathrm{ker}(\Box) \isom H^*(\g_-,\g)$.\\

A parabolic geometry (or its Cartan connection) is called \emph{normal} if $\codiff \circ \k^{\omega} = 0$. We note here the following useful expression for the codifferential acting on an element $\varphi \in C^2(\g_-,\g)$, computed in 2.5 of \cite{CS99}. For $\{e_{\a}\}$ any basis of $\g_-$, by general results there exists a unique basis $\{e^{\beta}\}$ of the sub-algebra $\p_+ := \g_1 \dsum \ldots \dsum \g_k$ which is dual with respect to the Killing form $B_{\g}$, in other words $B_{\g}(e_{\a},e^{\beta}) = \d_{\a}^{\beta}$. Then we have the following formula, which we use to define the terms $\codiff_1$ and $\codiff_2$:
\begin{align}
(\codiff \varphi)(X) &= \sum_{\a} [\varphi(X,e_{\a}),e^{\a}] - \frac{1}{2}\sum_{\a} \varphi([X,e^{\a}]_-,e_{\a}), \,\, \mathrm{for} \,\, \mathrm{all} \,\, X \in \g_- \label{codiff formula}\\
&=: (\codiff \varphi)_1(X) - \frac{1}{2}(\codiff \varphi)_2(X). \label{codiff 1 and 2 formula}
\end{align}

For a parabolic geometry, we can also use the $\vert k \vert$-grading of $\g$ to decompose the curvature function $\k = \k^{\omega}$ by \emph{homogeneity}: Letting $\k^{(l)}(X,Y) := (\k(X,Y))_{\g_{i+j+l}}$ for $X \in \g_i, Y \in \g_j$, we get $\k = \sum_{l = k-2}^{3k} \k^{(l)}$. A parabolic geometry (or its Cartan connection) is called \emph{regular} if its curvature function satisfies $\k^{(l)} = 0$ for all $l \leq 0$. It is called \textit{torsion free} if $\k(X,Y) \in \p$ for all $X,Y \in \g_-$, and a torsion-free connection is automatically regular.\\

A basic fact about parabolic geometries is that, under the assumption of regularity, they induce underlying geometric structures essentially defined on the manifold $M$. These are so called regular infinitesimal flag structures of type $(\g,\p)$. For a smooth manifold $M$ of the same dimension as $\g/\p$, let a filtration of $TM$ by distributions $T^{-k}M \supset \ldots \supset T^{-1}M$ be given, with $\mathrm{rk}(T^iM) = \mathrm{dim}(\g_i \dsum \ldots \dsum \g_{-1})$. Denoting $\mathrm{gr}_i(TM) := T^iM/T^{i+1}M$, the \emph{associated graded tangent bundle} is $\mathrm{Gr}(TM) := \mathrm{gr}_{-k}(TM) \dsum \ldots \dsum \mathrm{gr}_{-1}(TM)$. If the Lie bracket of vector fields satisfies $[\Gamma(T^iM),\Gamma(T^jM)] \subset \Gamma(T^{i+j}M)$, we call the filtration \textit{almost regular}. Then the Lie bracket induces a well-defined tensor $L$ called \emph{the generalized Levi-form}, $L \in \Gamma(\mathrm{gr}_i(TM)^* \alt \mathrm{gr}_j(TM)^* \tens \mathrm{gr}_{i+j}(TM)).$

\begin{Definition} Let $\g$ be a $\vert k \vert$-graded semi-simple Lie algebra, $\p=\g_0\dsum\ldots\dsum\g_k$, and $M$ a smooth manifold with $\mathrm{dim}(M) = \mathrm{dim}(\g/\p)$. A \emph{regular infinitesimal flag structure of type $(\g,\p)$} is given by: (i) an almost regular filtration $T^{-k}M \supset \ldots \supset T^{-1}M$ of $TM$, such that $(\mathrm{Gr}(T_xM),L(x)) \isom (\g_-,[,]_-)$, for all $x \in M$; (ii) a reduction of the frame bundle of $\mathrm{Gr}(TM)$ to a group $G_0$ with Lie algebra $\g_0$.
\end{Definition}

The following fundamental result for parabolic geometries is originally due to Tanaka \cite{Tan79}, cf. Section 3 of \cite{CS99} for a proof:

\begin{Theorem} \label{fund thm} Given $\g$ a $\vert k \vert$-graded semi-simple Lie algebra and $M$ a smooth manifold endowed with a regular infinitesimal flag structure of type $(\g,\p)$, there exists for some Lie group $G$ having Lie algebra $\g$ and subgroup $P$, a parabolic geometry of type $(G,P)$ inducing this structure. If $H^1_l(\g_-,\g) = 0$ for all $l > 0$, then this parabolic geometry is unique up to isomorphism.
\end{Theorem}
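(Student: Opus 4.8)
The plan is to prove the theorem in two stages, following the general philosophy of Tanaka's theory: first construct \emph{some} parabolic geometry inducing the given regular infinitesimal flag structure, and then analyze uniqueness under the cohomological hypothesis. For the existence part, I would first recover the group-level data from the infinitesimal flag structure. The reduction of the frame bundle of $\mathrm{Gr}(TM)$ to $G_0$ supplies a principal $G_0$-bundle $\Gbdle_0 \to M$, and the filtration of $TM$ together with the isomorphism $(\mathrm{Gr}(T_xM),L(x)) \isom (\g_-,[,]_-)$ gives a soldering of this bundle to $\g_-$. The key idea is to build the desired $P$-bundle $\Gbdle$ as a prolongation: since $P$ is an extension of $G_0$ by the nilpotent group $P_+ = \exp(\p_+)$, one expects $\Gbdle$ to be an affine bundle over $\Gbdle_0$ modeled on $\p_+$, and the Cartan connection $\omega$ to be assembled degree by degree along the grading.

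The heart of the construction is an inductive normalization procedure on the homogeneity of the candidate connection. I would start from an arbitrary principal connection and a soldering form, producing a first approximation to $\omega$ whose curvature function $\k$ may have components in all homogeneities $\k^{(l)}$ with $l \geq k-2$. The plan is then to successively modify $\omega$ by adding $\p_+$-valued one-forms of the appropriate homogeneity so as to kill the lowest-degree obstructions. Here the crucial mechanism is that the freedom in choosing $\omega$ at each step is governed by $C^1(\g_-,\g)$ while the curvature obstruction lives in $C^2(\g_-,\g)$, and the Kostant Laplacian $\Box$ together with the Hodge decomposition $C^*(\g_-,\g) = \mathrm{im}(\diff) \dsum \mathrm{ker}(\Box) \dsum \mathrm{im}(\codiff)$ controls exactly which obstructions can be removed by a change of connection. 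Regularity of the flag structure ensures the process begins in homogeneity $\geq 1$; imposing normality $\codiff \circ \k = 0$ at each degree fixes the ambiguity in $\mathrm{im}(\diff)$ and yields a well-defined normal connection in the limit after finitely many steps (the grading is finite).

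For the uniqueness statement, I would compare two normal parabolic geometries $(\Gbdle,\omega)$ and $(\Gbdle',\omega')$ inducing the same underlying structure. After identifying the two $P$-bundles via an isomorphism covering the identity on $M$ and respecting the $G_0$-reductions, the difference $\omega' - \omega$ is a $\p_+$-valued horizontal one-form, i.e. a section of a bundle associated to $C^1(\g_-,\g)$ whose values lie in $\p_+$. Analyzing the effect of such a difference on the curvature, one finds that the lowest nonvanishing homogeneous component of $\omega' - \omega$ contributes to the curvature difference through $\diff$, while normality forces this contribution into $\mathrm{ker}(\codiff)$. Hence the lowest component lies in $\ker(\diff) \cap \mathrm{im}(\diff)^{\perp}$, and the hypothesis $H^1_l(\g_-,\g) = 0$ for all $l > 0$ forces it to vanish. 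Iterating up the grading shows $\omega' = \omega$ after an appropriate bundle automorphism, giving uniqueness up to isomorphism.

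The main obstacle I expect is the inductive normalization step: one must verify carefully that at each homogeneity the obstruction really is a cocycle (so that the Hodge-theoretic splitting applies) and that the $P$-equivariance of $\codiff$ makes the normality condition well-defined on the associated bundles rather than merely on the fiber algebra. Keeping track of $P$-equivariance versus the weaker $G_0$-equivariance of $\diff$ throughout the construction is the delicate bookkeeping that makes the argument work; this is precisely where the properties of $\codiff$ recalled before equation~(\ref{codiff formula}) are indispensable. Since the detailed verification is standard and carried out in Section~3 of \cite{CS99}, I would present the argument at the level of this structural outline and refer there for the technical estimates.
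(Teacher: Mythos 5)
The paper offers no proof of this theorem at all: it is quoted as a foundational result due to Tanaka, with the reader referred to Section 3 of \cite{CS99} for the proof. Your outline is a faithful structural sketch of exactly that standard argument (prolongation of the $G_0$-bundle plus inductive normalization via the Kostant Laplacian and Hodge decomposition, with $H^1_l(\g_-,\g)=0$ killing the lowest-homogeneity difference of two normal connections in the uniqueness step), and it defers the technical details to the very same reference, so it is entirely consistent with the paper's treatment.
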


The following results will be important for applications in the sequel, because they allow us to draw conclusions about the structure of the curvature tensor of a parabolic geometry from algebraic, essentially algorithmically computable information. First we have, as a corollary of the Bianchi identity for Cartan connections, cf. corollary 4.10 of \cite{CS99}:

\begin{Proposition} \label{dell of lowest component}
Let $(\Gbdle \rightarrow M, \omega)$ be a parabolic geometry, and let $\k = \sum_l \k^{(l)}$ be the decomposition of the curvature function by homogeneity. Then if $\k^{(j)}$ is identically zero for all $j < i$, then $\diff \circ \k^{(i)}$ is identically zero as well.
\end{Proposition}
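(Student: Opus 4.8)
The plan is to derive the statement from the second Bianchi identity for the Cartan connection $\omega$, rewritten as a pointwise algebraic identity for the curvature function $\k$, and then to isolate the homogeneous component of weight $i$. First I would put the Bianchi identity into algebraic form. Writing $\zeta_X := \omega^{-1}(X)$ for the constant vector field associated to $X \in \g$, the defining properties (\ref{ad-inv})--(\ref{parallelism}) together with the structure equation give, for $X,Y \in \g_-$, the bracket relation $[\zeta_X,\zeta_Y] = \zeta_{[X,Y]-\k(X,Y)}$. Feeding this into the Jacobi identity $\sum_{\mathrm{cyc}}[\zeta_X,[\zeta_Y,\zeta_Z]]=0$ and cancelling the contribution $\sum_{\mathrm{cyc}}[X,[Y,Z]]=0$ coming from the Jacobi identity in $\g$ itself, one is left with an identity whose left-hand side is exactly $\diff \k$ and whose right-hand side collects a first-order term and a quadratic term; schematically
\[ \diff\k(X,Y,Z) = \sum_{\mathrm{cyc}} \big( (\nabla^{\omega}_X \k)(Y,Z) - \k(\k(X,Y)_-,Z) \big), \]
where $\nabla^{\omega}_X$ denotes the derivative along $\zeta_X$ and $\k(X,Y)_-$ the $\g_-$-component of $\k(X,Y)$. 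This is corollary 4.10 of \cite{CS99}, and the precise signs are irrelevant for what follows.

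The second step is homogeneity bookkeeping. Since $\diff$ is $G_0$-equivariant and $G_0$ preserves the grading of $\g$, the differential $\diff$ preserves homogeneity, so the weight-$i$ part of the left-hand side is precisely $\diff(\k^{(i)})$. It therefore suffices to show that, under the hypothesis $\k^{(j)}\equiv 0$ for $j<i$, neither term on the right contributes in weight $i$. The quadratic term is bilinear in $\k$, and since every nonvanishing factor has weight $\geq i$, the composite $\k(\k(\cdot,\cdot)_-,\cdot)$ has weight $\geq 2i$; in the range of (positive) homogeneities carrying curvature in the regular setting relevant throughout this section, $2i > i$, so this term drops out of the weight-$i$ equation.

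The one genuinely non-formal point, and the step I expect to be the main obstacle, is the weight estimate for the derivative term. Here I would argue as follows: for $X\in\g_{-p}$ with $p\geq 1$, $Y\in\g_b$ and $Z\in\g_c$, the element $(\nabla^{\omega}_X\k^{(l)})(Y,Z)$ is obtained by differentiating the $\g_{b+c+l}$-valued function $u \mapsto \k^{(l)}(u)(Y,Z)$ along $\zeta_X$, and hence still takes values in $\g_{b+c+l}$. Viewed as a trilinear map in $(X,Y,Z)$ it therefore has weight $(b+c+l)-(-p+b+c) = l+p \geq l+1$; in other words, differentiating along a negative-degree direction strictly raises homogeneity. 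Consequently the weight-$i$ part of $\sum_{\mathrm{cyc}}(\nabla^{\omega}_X\k)(Y,Z)$ only involves the components $\k^{(l)}$ with $l<i$, all of which vanish by hypothesis. Combining the three observations, the weight-$i$ component of the Bianchi identity collapses to $\diff(\k^{(i)})=0$, which is the assertion. The delicate bookkeeping is thus entirely concentrated in verifying that both correction terms live strictly above weight $i$; once that is in place the conclusion is immediate.
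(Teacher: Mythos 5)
Your proposal follows exactly the route the paper intends: the paper offers no proof of its own for this proposition, presenting it as ``a corollary of the Bianchi identity for Cartan connections'' and citing corollary 4.10 of \cite{CS99}, and that corollary is obtained precisely by the argument you give --- rewrite the Bianchi identity as a pointwise algebraic identity for $\k$, observe that $\diff$ preserves homogeneity, and check that neither the derivative term nor the quadratic term can contribute in weight $i$. Your estimate for the derivative term is correct and is indeed the crux: differentiating $u \mapsto \k^{(l)}(u)(Y,Z)$ along $\omega^{-1}(X)$ with $X \in \g_{-p}$ leaves the value in the same grading component while the input degree drops by $p$, so the weight is raised by $p \geq 1$, and hence only the components $\k^{(l)}$ with $l < i$ (which vanish by hypothesis) could feed into weight $i$.

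The one discrepancy is your treatment of the quadratic term, and you half-acknowledge it yourself: the inequality $2i > i$ holds only for $i \geq 1$, and your appeal to ``the regular setting'' imports a hypothesis that the proposition, as stated, does not contain (neither regularity nor positivity of $i$ is assumed). This restriction is not removable by the same bookkeeping: in weight $i \leq 0$ the quadratic term can pair components $\k^{(l)}$ and $\k^{(m)}$ with $l+m=i$ and $i \leq l,m \leq 0$, and, for instance, at $i=0$ the weight-zero part of the Bianchi identity reads $\diff\k^{(0)} = \pm\sum_{\mathrm{cyc}}\k^{(0)}(\k^{(0)}(X,Y)_-,Z)$, which need not vanish; so for $\vert k \vert$-gradings with $k \geq 3$ the unrestricted statement, not merely your proof of it, is in doubt at non-positive homogeneities. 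Two facts make this harmless in the present paper. First, for $k \leq 2$ --- which covers the conformal, CR and qc geometries considered here --- the grading bounds kill the quadratic term in non-positive weights as well: a nonzero pairing would force the inner curvature value into $\g_{-2}$ with $l=0$, and the outer value would then land in degree $\leq -3$, below $\g_{-k}$. Second, every application of the proposition in the paper has $i \geq 1$ (in the proof of theorem \ref{normality} it is applied with $i=2$, after torsion-freeness, hence regularity, is in hand; proposition \ref{harmonic curvature}(i) concerns regular geometries). So your proof covers everything the paper actually uses, but you should state the hypothesis $i \geq 1$ (equivalently, include $\k^{(j)}=0$ for all $j \leq 0$ among the assumptions) explicitly, rather than leaving it implicit in the word ``regular''.
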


In particular, if the parabolic geometry $(\Gbdle,\omega)$ is normal, this implies that the lowest non-zero homogeneous component of the curvature, $\k^{(i)}$, satisfies $\Box \circ \k^{(i)} = 0$. For a normal parabolic geometry, the \emph{harmonic curvature $\k_H$} is defined to be the image of $\k$ under the projection $\mathrm{ker}(\codiff) \rightarrow \mathrm{ker}(\codiff)/\mathrm{im}(\codiff) \isom H^2(\g_-,\g)$. Using the isomorphism $\mathrm{ker}(\Box) \isom H^2(\g_-,\g)$, we may therefore identify the lowest degree component $\k^{(i)}$ with an element of the homogeneity $i$ component of the second cohomology, $\k^{(i)} \in H^2_i(\g_-,\g) := H^2(C^*_i(\g_-,\g),\diff_{C^*_i})$. By Kostant's version of the Bott-Borel-Weil Theorem, the module $H^2(\g_-,\g)$ is completely reducible as a $P$-module (and thus may be considered as a $G_0$-module with trivial action of $P_+$), and the irreducible $G_0$-submodules may be determined via algorithms (cf. \cite{Sil}, \cite{Yam}).\\

On the other hand, knowledge via these algebraic results of the harmonic curvature $\k_H$, may be used to draw conclusions about the structure of the full curvature $\k$, as explained by the next proposition. The first statement follows from inductive application of proposition \ref{dell of lowest component} (cf. 4.11 in \cite{CS99}), while the proof of the second statement relies on the more advanced machinery of curved Bernstein-Gelfand-Gelfand sequences for parabolic geometries (cf. \cite{CSS01}, \cite{CD01}; the formulation here follows corollary 3.2 of \cite{C05}):

\begin{Proposition} \label{harmonic curvature}
Let $(\Gbdle \rightarrow M,\omega)$ be a regular, normal parabolic geometry of type $(G,P)$, with curvature $\k$ and harmonic curvature $\k_H$. Then: (i) $\k$ vanishes identicially if and only if $\k_H$ does. (ii) Suppose a $P$-submodule $\mathbb{E} \subset \mathrm{ker}(\codiff) \subset C^2(\g_-,\g)$ is given, and that $\k_H$ has values in $\mathbb{E}_0 := \mathbb{E} \cap \mathrm{ker}(\Box)$. If either $\omega$ is torsion-free, or if we have $\codiff(\varphi \: \psi) \in \mathbb{E}$ for any $\varphi, \psi \in \mathbb{E}$ (where $\varphi \: \psi$ is the alternation of the map $(X_0,X_1,X_2) \mapsto \psi(\varphi(X_0,X_1)_-,X_2)$ for $X_i \in \g_-$), then the curvature $\k$ has values in $\mathbb{E}$.
\end{Proposition}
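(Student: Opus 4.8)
The plan is to prove the two parts separately, with part (i) serving as a warm-up whose minimal-homogeneity argument is reused in part (ii). For part (i) one direction is immediate: since $\k_H$ is by definition the image of $\k$ under harmonic projection, $\k \equiv 0$ forces $\k_H \equiv 0$. For the converse I would argue by contradiction. Assume $\k \not\equiv 0$ and let $\k^{(i)}$ be the lowest non-vanishing homogeneous component. Since all lower components vanish, proposition \ref{dell of lowest component} gives $\diff \k^{(i)} = 0$, while normality ($\codiff \k = 0$) together with the fact that $\codiff$ preserves homogeneity yields $\codiff \k^{(i)} = 0$; hence $\Box \k^{(i)} = 0$. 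As the harmonic projection preserves homogeneity, $\k^{(i)}$ coincides with the homogeneity-$i$ part of $\k_H$, which vanishes by hypothesis, contradicting the minimality of $i$.

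For part (ii) I would run an induction on homogeneity, reconstructing $\k$ component by component out of $\k_H$. The structural input is that normality places $\k$ in $\ker(\codiff) = \ker(\Box) \dsum \mathrm{im}(\codiff)$, so $\k$ has no $\mathrm{im}(\diff)$ part and each component splits as $\k^{(l)} = (\k_H)^{(l)} + \mu^{(l)}$ with $(\k_H)^{(l)}$ harmonic and $\mu^{(l)} \in \mathrm{im}(\codiff)$. The harmonic piece lies in $\mathbb{E}_0 \subseteq \mathbb{E}$ by assumption. For the coexact piece, using $\codiff\mu^{(l)} = 0$ and $\diff(\k_H)^{(l)} = 0$ one gets $\Box \mu^{(l)} = \codiff\diff\k^{(l)}$, hence $\mu^{(l)} = \Box^{-1}\codiff\diff\k^{(l)}$, where $\Box^{-1}$ is the inverse of $\Box$ on $\mathrm{im}(\codiff)$.

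The engine is the Bianchi identity for Cartan connections, which rewrites $\diff\k^{(l)}$ as a quadratic cup-product term $\sum_{a+b=l}\k^{(a)} \: \k^{(b)}$ plus covariant-derivative terms $\nabla\k^{(l')}$ with $l' < l$. A homogeneity count (using $a,b \geq i \geq 1$ and the fact that $\nabla$ raises homogeneity) shows every term on the right involves only components $\k^{(j)}$ with $j < l$, so the induction closes. Since $\Box$, $\codiff$ and therefore $\Box^{-1}$ on $\mathrm{im}(\codiff)$ are $P$-equivariant and $\mathbb{E}$ is a $P$-submodule, applying $\Box^{-1}\codiff$ preserves values in $\mathbb{E}$; the covariant-derivative contributions are precisely the terms organized by the splitting operator of the curved BGG machinery, and this is the content imported from corollary 3.2 of \cite{C05}. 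By naturality of that operator and the $P$-invariance of $\mathbb{E}$, these derivative terms stay inside $\mathbb{E}$ with no further hypothesis — this automatic control being the essential reason one needs the BGG resolution rather than the purely algebraic argument of part (i).

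The main obstacle is the quadratic cup-product term, which is exactly where the two alternative assumptions enter, and I note that the abstract $\varphi \: \psi$ in the statement is precisely the cup product $\k(\k(\cdot,\cdot)_-,\cdot)$ appearing in the Bianchi identity. If $\omega$ is torsion-free then $\k(X,Y) \in \p$, so $\k(X,Y)_- = 0$ and every cup-product term vanishes identically, leaving only the derivative terms already handled by BGG. Otherwise, by the inductive hypothesis $\k^{(a)}, \k^{(b)} \in \mathbb{E}$ for $a,b < l$, and the assumption $\codiff(\varphi \: \psi) \in \mathbb{E}$ for $\varphi, \psi \in \mathbb{E}$ guarantees that $\codiff$ of the cup-product term again lands in $\mathbb{E}$, after which $\Box^{-1}$ keeps it there. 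I expect the genuinely delicate point to be invoking the BGG reconstruction precisely enough to justify that the covariant-derivative terms require no extra assumption, since that — rather than the comparatively transparent cup-product bookkeeping — is what forces the use of the BGG resolution.
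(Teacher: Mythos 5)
Your part (i) is correct, and it is exactly the route the paper indicates: the paper itself gives no proof of proposition \ref{harmonic curvature}, but attributes (i) to inductive application of proposition \ref{dell of lowest component} (4.11 of \cite{CS99}), which is precisely your minimal-homogeneity contradiction, and attributes (ii) to corollary 3.2 of \cite{C05}, whose proof it explicitly says ``relies on the more advanced machinery of curved Bernstein-Gelfand-Gelfand sequences.''

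For part (ii) there is a genuine gap, and it sits exactly at the step the paper flags as requiring that machinery. Your algebraic reduction is fine: normality gives $\k^{(l)} = (\k_H)^{(l)} + \mu^{(l)}$ with $\mu^{(l)} \in \mathrm{im}(\codiff)$ and $\Box\mu^{(l)} = \codiff\diff\k^{(l)}$, the homogeneity count in the Bianchi identity is right, and the quadratic terms are indeed the cup products $\varphi \: \psi$, killed by torsion-freeness or controlled by the hypothesis $\codiff(\varphi \: \psi) \in \mathbb{E}$. What is not justified is the claim that applying $\Box^{-1}\codiff$ keeps values in $\mathbb{E}$. First, $\Box$ is \emph{not} $P$-equivariant: the paper itself notes that $\diff$ is only $G_0$-equivariant, hence so is $\Box = \diff\codiff + \codiff\diff$. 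Second, and decisively, equivariance of an operator together with invariance of a submodule does not imply that the operator maps the submodule into itself; an equivariant map sends a submodule onto \emph{some} submodule, not necessarily into the given one (consider the flip on $V \dsum V$ and the submodule $V \dsum 0$). What your step actually needs is $\codiff\diff(\mathbb{E}) \subseteq \mathbb{E}$, equivalently that $\Box$ preserves $\mathbb{E} \cap \mathrm{im}(\codiff)$ --- and this is neither a hypothesis nor a consequence of $\mathbb{E}$ being a $P$-submodule of $\mathrm{ker}(\codiff)$; note that $\mathbb{E}$ need not even split along the Hodge decomposition, which is only $G_0$-invariant. The same problem afflicts the derivative terms: the directional derivative of an $\mathbb{E}$-valued equivariant function is indeed $\mathbb{E}$-valued, but the Bianchi identity feeds the \emph{alternation} of those derivatives into $C^3(\g_-,\g)$ and then through $\codiff$, and there is no a priori reason that $\codiff$ of such a term lands back in $\mathbb{E} \subset C^2(\g_-,\g)$; moreover these terms survive in the torsion-free case, so torsion-freeness does not rescue the induction either. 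Your stated justification --- ``naturality of the BGG splitting operator'' with ``no further hypothesis,'' described as ``the content imported from corollary 3.2 of \cite{C05}'' --- is circular, since corollary 3.2 of \cite{C05} is the very statement being proved. In short, the skeleton of your induction has the right shape, but the two invariance claims it leans on (for $\Box^{-1}$ and for the derivative terms) are precisely the nontrivial content that the curved BGG argument supplies, and in your write-up they are asserted rather than established.
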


\begin{Remark} \label{torsion free harmonic curvature}
Note that the final hypothesis is trivially satisfied for the submodule of torsion-free curvatures: Taking $\mathbb{E} = (\alt^2 (\g_-)^* \tens \p \cap \mathrm{ker}(\codiff))$, we have $(\varphi \: \psi)(X_0,X_1,X_2) = 0$ for all $\varphi, \psi \in \mathbb{E}$. In particular, proposition \ref{harmonic curvature} implies that the canonical parabolic geometry is torsion-free whenever its harmonic curvature $\k_H$ takes values in $\p$.
\end{Remark}

We mention briefly the so-called tractor bundles related to a parabolic geometry $(\Gbdle \rightarrow M, \omega)$ of type $(G,P)$. For any finite dimensional representation $\rho^G: G \rightarrow Gl(V)$, the restriction $\rho$ to a representation of $P$ defines the associated \emph{tractor bundle} $\mathcal{V} = \mathcal{V}(M) := \Gbdle \times_{\rho} V$. Via the principal bundle connection determined on the extension of $\Gbdle$ to $G$ (cf. definition \ref{reduction and holonomy}), $\mathcal{V}$ inherits an affine connection $\nabla^{\mathcal{V}}$ induced by $\omega$. More generally, $P$-invariant objects and constructions on the representation space $V$ can be carried over naturally to $\mathcal{V}$.\\

The most important tractor bundle is the \emph{adjoint tractor bundle} $\Abdle$ induced by the adjoint representation of $G$ on $\g$. From the $P$-invariant filtration of $\g$, we get a filtration by subbundles: $$\Abdle = \Abdle^{-k} \supset \Abdle^{-k+1} \supset \ldots \supset \Abdle^k.$$ The quotients $\Abdle_i := \Abdle^i/\Abdle^{i+1}$ define grading components of the \emph{associated graded adjoint bundle} $\mathrm{Gr}(\Abdle) := \dsum_{i=-k}^k \Abdle_i$. The quotient bundle $\Abdle/\Abdle^0$ is isomorphic to $TM$, determining a natural projection $\Pi: \Abdle \rightarrow TM$, so that any section of the adjoint tractor bundle determines a vector field on the base manifold.

Finally, let us mention the parabolic geometries which are the main subject of this work: conformal, CR and qc structures.

\begin{Example} \label{qc parabolics} The parabolic geometries corresponding to conformal and CR structures are well-explored in the literature. See for example chapter 8 of \cite{Sha} and section 4.14 of \cite{CS99} for discussion; sections 0.6 and 0.7 of \cite{FelHab} provide an exposition of many of the standard results in conformal Cartan geometry which we'll need. Here we fix the matrix representations of the groups $G$ and $P$ which occur in the homogeneous models of conformal, CR and qc geometries.\\

\no For $\F = \R, \C$ or $\H$ and non-negative integers $p \geq q$, we let $\F^{p+1,q+1}$ be the vector space $\F^{p+q+2}$ endowed with the indefinite hermitian scalar product $\prec,\succ_{p+1,q+1}$ and quadratic form $Q_{p+1,q+1}$ given as follows (we denote the standard ordered basis vectors as $e_0, \ldots, e_{p+q+1}$ and the components of a vector accordingly):
\begin{align}
\prec x,y \succ_{p+1,q+1} = x^t Q_{p+1,q+1}\overline{y} := \sum_{a=0}^q (x_a \overline{y}_{p+q+1-a} + x_{p+q+1-a} \overline{y}_a) + \sum_{a=q+1}^p x_a \overline{y}_a. \label{quad form Q}
\end{align}
\no Then the parabolic geometry associated to a conformal structure of signature $(p,q)$ is of type $(G_{p,q}^{co},P^{co})$, where $G_{p,q}^{co} := PO(\R^{p+1,q+1}) := O(\R^{p+1,q+1})/\{\pm \mathrm{Id}\}$ and $P^{co} = \mathrm{Stab}_{G_{p,q}^{co}}(\R e_0)$. For a pseudo-convex CR structure of (real) signature $(2p,2q)$ with $p+q$ even, we will assume that there is an associated parabolic geometry of type $(G_{p,q}^{cr},P^{cr})$, where $G_{p,q}^{cr} := SU(\C^{p+1,q+1})/\{\pm \mathrm{Id}\}$ and $P^{cr} = \mathrm{Stab}_{G_{p,q}^{cr}}(\C e_0)$ (this is slightly different from the standard homogeneous model, which is given by factoring out the center of $SU(\C^{p+1,q+1})$, which is isomoprhic to $\Z_{p+q+2}$, but for the CR structures we consider, we can always take this model).\\

\no Now given a qc manifold $(M,\Dbdle,\mathbb{Q},[g])$ of dimension $4n+3$ as in definition \ref{Biquard's definition}, we evidently have a filtration $TM =: T^{-2}M \supset \Dbdle =: T^{-1}M$ of the tangent bundle (so we automatically have $[\Gamma(T^iM),\Gamma(T^jM)] \subset \Gamma(T^{i+j}M)$), and a reduction of the structure group of $\mathrm{Gr}(TM)$ to $CSp(1)Sp(n)$. The condition (\ref{qc condition}) means precisely that the generalized Levi-form on $\mathrm{Gr}(TM)$ is pointwise isomorphic to the quaternionic Heisenberg algebra $\mathrm{Im}(\H) \dsum \H^n$.\\

\no The quaternionic Heisenberg algebra naturally appears as the nilpotent subalgebra $\g^{qc}_-$ of the $\vert 2 \vert$-graded Lie algebra $\g^{qc} := \sp(\H^{n+1,1}) \isom \sp(n+1,1)$. One calculates, namely:
\begin{align}
\g^{qc} = \{ \left(\begin{array}{ccc}
            -\overline{a} & z & q \\
            \overline{x} & A_0 & -\overline{z}^t \\
            \overline{p} & -x^t & a
                \end{array}\right)  \, \vert \, a \in \H, A_0 \in \sp(n), p,q \in \mathrm{Im}(\H), x,z^t \in \H^n \}. \label{matrix form}
\end{align}
This gives an obvious grading by elements of (off) ``diagonal'' form, and in particular we get the isomorphisms $\g^{qc}_- \isom \mathrm{Im}(\H) \dsum \H^n$, $\g^{qc}_0 \isom \R \dsum \sp(1) \dsum \sp(n)$, and $\p^{qc}_+ = \g_1 \dsum \g_2 \isom (\g^{qc}_-)^*$. We will use these isomorphisms also for economy of notation, e.g. writing $z \in \g^{qc}_1$ to denote the matrix as above in which only $z$ is non-zero, or $(a,A_0) \in \g^{qc}_0$, $p \in \g^{qc}_{-2}$, etc. Straightforward matrix calculations verify that the Lie bracket respects the gradings.\\

\no If we take $\p^{qc} = \g^{qc}_0 \dsum \p^{qc}_+$, this shows that a qc manifold naturally has a regular infinitesimal flag structure of type $(\g^{qc}, \p^{qc})$. On the other hand, given such a flag structure, we get a distribution $\Dbdle$ of co-rank $3$ as required, and a $G^{qc}_0$-structure on this for some group having Lie algebra $\g^{qc}_0$. In particular, this guarantees the satisfaction of (i) and (ii) in definition \ref{Biquard's definition}. Finally, by regularity (i.e. the Levi-form agrees with the Lie bracket of the Heisenberg algebra), we can locally fix a trivialization of $TM/\Dbdle$, such that the $\R^3$-valued one-form corresponding to $TM \rightarrow TM/\Dbdle$ satisfies def. \ref{Biquard's definition}(iii). In dimension $7$, the structure induced by a regular flag structure need not satisfy def. \ref{Biquard's definition}(iv); this condition is equivalent to torsion-freeness of the canonical Cartan connection, as was observed in 3.10 of \cite{CapSouc}.\\

\no In any case, every qc manifold determines a regular infinitesimal flag structure of type $(\g^{qc},\p^{qc})$. Using the algorithms for computing Lie algebra cohomology based on Kostant's version of Bott-Borel-Weil (cf. \cite{Sil} or \cite{Yam}), we see that $H^1_l(\g^{qc}_-,\g^{qc}) = 0$ for all $l \geq 0$, and so we also have a canonical parabolic geometry for any qc manifold. If we take $G^{qc} = PSp(\H^{n+1,1}) = Sp(\H^{n+1,1})/\{\pm \mathrm{Id}_{n+2} \}$ and $P^{qc} := \mathrm{Stab}_{G^{cr}}(\H e_0)$, then the Lie algebras of $G^{qc}$ and $P^{qc}$ clearly correspond to $\g^{qc}$ and $\p^{qc}$, respectively. Moreover, one sees that the subgroup $G^{qc}_0$, of elements whose adjoint action preserve the grading of $\g^{qc}$, is isomorphic to $CSp(1)Sp(n)$ and $P^{qc} = G^{qc}_0 \ltimes \mathrm{exp}(\p^{qc}_+)$. It follows from theorem \ref{fund thm} that every qc manifold $M$ has a unique regular and normal parabolic geometry of type $(G^{qc},P^{qc})$, and we will use the notation $(\Gbdle^{qc} \rightarrow M,\omega^{qc})$ to denote this one (similar notation applies to the canonical Cartan geometries of conformal and CR manifolds).\\

\no In fact, the connection $\omega^{qc}$ is even torsion-free, a property which will be important later. For $n > 1$, this follows from purely algebraic considerations. One computes with Kostant's version of BBW that in these cases, $H^2(\g^{qc}_-,\g^{qc})$ has exactly one $G^{qc}_0$-irreducible component of homogeneity 2, which is contained in $(\g^{qc}_{-1})^* \alt (\g^{qc}_{-1})^* \tens \g^{qc}_0$, and so by proposition \ref{harmonic curvature} and remark \ref{torsion free harmonic curvature}, $\omega^{qc}$ is torsion-free. For $n=1$, the second cohomology also has an irreducible component of homogeneity 1, but the existence of the Biquard connection (which follows from condition (iv) in definition \ref{Biquard's definition}, as shown in \cite{Duch}) and a short calculation, show that the curvature component of homogeneity 1 vanishes, cf. \cite{alt2}.
\end{Example}

\section{Fefferman constructions and holonomy}

We begin by recalling the abstract Fefferman construction, which can be carried out for Cartan geometries of general type \cite{C06}. Suppose a geometry $(\Gbdle \rightarrow M, \omega)$ of type $(G,P)$ is given (for now, not necessarily assumed parabolic), and an embedding $G \hookrightarrow \tilde{G}$. For any closed subgroup $\tilde{P} \subset \tilde{G}$, a Cartan geometry of type $(\tilde{G},\tilde{P})$ may be constructed whenever $G$ acts locally transitively on $\tilde{G}/\tilde{P}$ (i.e. $\tilde{\g} = \g + \tilde{\p}$), and $P \supset (G \cap \tilde{P})$.\\

Namely, defining $\tilde{M} := \Gbdle/(G \cap \tilde{P})$ automatically gives $(\Gbdle \rightarrow \tilde{M},\omega)$ the structure of a Cartan geometry of type $(G, G \cap \tilde{P})$. The extension $\tilde{\Gbdle} = \Gbdle \times_{G \cap \tilde{P}} \tilde{P}$ defines a principal $\tilde{P}$ bundle over $\tilde{M}$, and $\omega$ can be canonically extended to $\tilde{\omega} \in \Omega^1(\tilde{\Gbdle},\tilde{\g})$ by requiring that $\tilde{\omega}$ satisfy (\ref{ad-inv}) and (\ref{fund-vfs}) with respect to $\tilde{P}$ and $\tilde{\p}$, respectively. Finally, the property (\ref{parallelism}) holds for $\tilde{\omega}$, by local transitivity, so we have a Cartan geometry of type $(\tilde{G},\tilde{P})$.\\

Note that by construction, the curvature quantities  of $\omega$ and $\tilde{\omega}$ (e.g. the curvature functions $\k$ and $\tilde{\k}$, respectively) are related in a simple way. Consider the projection $\pi_{\p}: \g/(\tilde{\p} \cap \g) \rightarrow \g/\p$. Then for any $u \in \Gbdle \subset \tilde{\Gbdle}$, identifying $X,Y \in \tilde{\g}/\tilde{\p} \isom \g/(\tilde{\p} \cap \g)$ (by local transitivity), we have
\begin{align}
\tilde{\k}(u)(X,Y) = \k(u)(\pi_{\p}(X),\pi_{\p}(Y)). \label{Fefferman curvature relation}
\end{align}
\no Via $\tilde{P}$-equivariance, this completely determines the curvature function $\tilde{\k}$. In particular, we see that a Fefferman space $(\tilde{\Gbdle},\tilde{\omega})$ induced by a geometry of type $(G,P)$, must satisfy $\tilde{\k}(u)(X,.)=0$ for all points $u$ of the sub-bundle $\Gbdle \subset \tilde{\Gbdle}$ and all $X \in \p$. This section is devoted to determining, in a sufficiently general setting, the conditions which are sufficient. Thus our question is, given a Cartan geometry of type $(\tilde{G},\tilde{P})$, is it (locally or globally) isomorphic to the generalized Fefferman space of a Cartan geometry of some appropriate type $(G,P)$. We say in this case that there is \emph{a Fefferman reduction to $(G,P)$}.

\subsection{Correspondence spaces and twistor reductions}

The main step in answering this question involves the notion of correspondence spaces and twistor spaces for Cartan geometries (these are distinct from the CR twistors defined in \cite{B00}). These were introduced for parabolic geometries in \cite{C05}:

\begin{Definition} Let $H \subset B$ be closed subgroups of a Lie group $G$. For a Cartan geometry $(\Gbdle,\pi_B,N,\omega)$ of type $(G,B)$, denote the natural projection $\Gbdle \rightarrow M := \Gbdle/H$ by $\pi_H$. Then $(\Gbdle,\pi_H,M,\omega)$ is the \textit{correspondence space of type $(G,H)$ induced by $(\Gbdle,\pi_B,N,\omega)$}. A Cartan geometry of type $(G,H)$ is said to admit a (local) \emph{twistor reduction to $(G,B)$} if it is (locally) isomorphic to the correspondence space induced by some Cartan geometry of type $(G,B)$.
\end{Definition}

Correspondence spaces occur at an intermediate stage of the general Fefferman construction, so we first need to find necessary and sufficient conditions for a Cartan geometry to admit a twistor reduction. A global result is accomplished, for a sufficiently general setting for our purposes, with the following lemma and with theorem \ref{Twistor reduction}:

\begin{Lemma} \label{Twistor Lemma} Let $(\Gbdle,\pi_{H},M,\omega)$ be a Cartan geometry of type $(G,H)$, $M$ connected, and let $B \subset G$ be a connected closed subgroup, such that $H \vartriangleleft B$ and $B/H$ is compact. There exists a smooth manifold $N$ such that $\Gbdle$ is the total space of a principal $B$ bundle $\pi_B:\Gbdle \rightarrow N$ and $M$ is the total space of a principal $B/H$ bundle $\pi_{B/H}:M \rightarrow N$, with $\pi_B = \pi_{B/H} \circ \pi_H$ and $\omega$ respecting the fundamental vector fields of the $B$ action, if and only if the following conditions hold: (i) $\k^{\omega}(\b,\b) = \{0\}$; (ii) The \emph{$\b$-constant vector fields} $\{\omega^{-1}(X) \mid X \in \b\}$ are complete; (iii) $\omega_{\mid T\mathcal{L}} \in \Omega^1(\mathcal{L},\b)$ has trivial monodromy for all leaves $\mathcal{L} \subset \Gbdle$ generated by $\b$-constant vector fields; (iv) The induced foliation $\{\pi_{H}(\mathcal{L}) \subset M\}$ has trivial leaf holonomy.
\end{Lemma}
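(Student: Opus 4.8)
\emph{The plan.} The substance of the lemma is the ``if'' direction, so I would concentrate on building $N$ from the four hypotheses and only indicate at the end why each condition is forced by the bundle structure. The guiding idea is that the $\b$-constant vector fields should become the fundamental vector fields of the sought principal $B$-action, so the whole proof is really about integrating the infinitesimal action they define and showing the resulting orbit space is a manifold. As a first step, using the structure equation one computes, for $X,Y \in \g$, that $\omega([\omega^{-1}(X),\omega^{-1}(Y)]) = [X,Y] - \k^{\omega}(X,Y)$. Restricting to $X,Y \in \b$ and invoking hypothesis (i), the right-hand side equals $[X,Y] \in \b$, so the $\b$-constant vector fields are closed under bracket and $[\omega^{-1}(X),\omega^{-1}(Y)] = \omega^{-1}([X,Y])$. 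Hence they span an involutive distribution of rank $\dim \b$; by Frobenius it integrates to a foliation of $\Gbdle$ whose leaves $\mathcal{L}$ carry the absolute parallelism $\omega|_{T\mathcal{L}} \colon T\mathcal{L} \to \b$. The same computation shows $K^{\omega}$ vanishes on $T\mathcal{L}$, so $\omega|_{T\mathcal{L}}$ satisfies the Maurer--Cartan equation: each leaf is a flat Cartan geometry of type $(B,\{e\})$.

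\emph{Integrating to a free $B$-action (uses (ii), (iii)).} The $\b$-constant fields define an infinitesimal right action of $\b$; completeness (ii) lets me integrate it, via Palais' theorem, to a genuine smooth right action of the simply connected group $\tilde{B}$ covering $B$. Restricted to $\h \subset \b$ this recovers the given principal $H$-action, since there the $\b$-constant fields are exactly the fundamental fields of $\pi_{H}$; I then check that this forces the kernel $Z$ of $\tilde{B} \to B$ to act trivially, so the action descends to $B$. The orbits are precisely the leaves, and on a leaf the orbit map $B \to \mathcal{L}$ pulls $\omega|_{T\mathcal{L}}$ back to the Maurer--Cartan form of $B$, so the development of the flat leaf is governed by $B$ and its monodromy group is exactly the stabiliser. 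Hypothesis (iii) (trivial monodromy) therefore says every stabiliser is trivial: the $B$-action is free and each leaf is diffeomorphic to $B$.

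\emph{From foliation to fibration (uses (iv) and compactness of $B/H$).} Since $H \vartriangleleft B$, the free $B$-action descends to a free action of the group $B/H$ on $M = \Gbdle/H$, whose orbits are the leaves $\pi_{H}(\mathcal{L}) \isom B/H$ and are therefore compact. A free action with compact orbits need not be proper, and a foliation by compact leaves need not be a fibration, and this is where (iv) enters: trivial leaf holonomy together with compactness of the leaves lets me invoke a Reeb-type stability theorem to conclude that the leaf space $N$ is a Hausdorff smooth manifold and that $\pi_{B/H}\colon M \to N$ is a locally trivial bundle, which freeness upgrades to a principal $B/H$-bundle. Composing with the principal $H$-bundle $\Gbdle \to M$ and again using $H \vartriangleleft B$ exhibits $\Gbdle \to N$ as a principal $B$-bundle with $\pi_{B} = \pi_{B/H}\circ\pi_{H}$; that $\omega$ respects the fundamental fields of this $B$-action is immediate, since by construction they are the $\b$-constant fields $\omega^{-1}(X)$.

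\emph{Converse and the main obstacle.} For necessity I would read the four conditions off the bundle structure: the fundamental fields being $\omega$-constant gives (i) via the bracket identity above and gives completeness (ii); the fibres, carrying the Maurer--Cartan form, have trivial monodromy (iii); and a fibre bundle has trivial leaf holonomy (iv). I expect the main obstacle to be the global passage from the infinitesimal, leafwise data to an honest double fibration $\Gbdle \to M \to N$ with $N$ a manifold, i.e. establishing properness of the action and Hausdorffness of the leaf space. Frobenius and Palais only produce the foliation and a possibly non-proper action; it is the interplay of the two global conditions --- the \emph{vertical} monodromy (iii) inside $\Gbdle$ and the \emph{transverse} leaf holonomy (iv) on $M$, together with compactness of $B/H$ --- that must rule out the pathologies (irrationally wound leaves, or compact leaves of unbounded holonomy) and force the clean principal bundle structure.
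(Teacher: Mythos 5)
Your overall architecture matches the paper's proof: condition (i) gives the bracket identity $\omega([\omega^{-1}(X),\omega^{-1}(Y)])=[X,Y]-\k^{\omega}(X,Y)$ and hence an infinitesimal $\b$-action; (ii) and Palais' theorem integrate it to an action of the universal cover $\tilde{B}$ whose orbits are the leaves; $H\vartriangleleft B$ pushes everything down to $M$; and (iv), together with compactness of $B/H$ and a standard result on simple foliations (the paper cites Sharpe), produces the leaf space $N$ and the bundle structures; the converse is read off in the same way in both proofs. However, there is a genuine gap at the crucial step where the $\tilde{B}$-action is shown to descend to $B$. You claim that, because the restriction of the infinitesimal action to $\h$ integrates to the given principal $H$-action, the kernel $Z=\ker(\tilde{B}\rightarrow B)$ is forced to act trivially. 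This does not follow: $Z\cong\pi_1(B)$ is a central discrete subgroup of $\tilde{B}$ which in general has nothing to do with the subgroup of $\tilde{B}$ integrating $\h$. The hypotheses allow $H=\{e\}$ (compactness of $B/H$ then just means $B$ is compact), in which case your restriction argument carries no information at all, while $\pi_1(B)$ can be nontrivial; already for $B=S^1$ or $B=SO(3)$ the flow of a $\b$-constant field need not close up with the periods of $B$ --- a leaf can be a nontrivial cover of $B$ --- and nothing about the $H$-action can rule this out.

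The descent has to be extracted from the monodromy hypothesis (iii), and this is exactly what the paper does. Trivial monodromy plus the global fundamental theorem of calculus give, on each leaf, a global primitive $f_u:(\mathcal{L}_u,u)\rightarrow(B,e)$ with $f_u^{*}\omega_B=\omega_{\mid\mathcal{L}_u}$; composing with the orbit map $g_u:\tilde{B}\rightarrow\mathcal{L}_u$, $b\mapsto u\cdot b$, yields a map $f_u\circ g_u:\tilde{B}\rightarrow B$ whose Darboux derivative is the Maurer--Cartan form, so by uniqueness of primitives it is the universal covering $\tilde{B}\rightarrow B$. From this the paper identifies the isotropy groups of the $\tilde{B}$-action with $\pi_1(B)$, which yields \emph{simultaneously} that the action factors through $B$ and that $B$ acts simply transitively on the leaves. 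Note that your own later step --- ``the monodromy group is exactly the stabiliser, so (iii) gives freeness'' --- cannot repair the descent, because it already presupposes the orbit map $B\rightarrow\mathcal{L}$ of a descended $B$-action; as written, your use of (iii) is circular. The correct order is the paper's: (iii) first produces the developing map on each leaf, and both descent and freeness fall out of comparing it with the covering $\tilde{B}\rightarrow B$.
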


\begin{proof} ($\Rightarrow$) Assuming $\Gbdle$ is the total space of a principal $B$ bundle, the right action of $B$ induces a Lie algebra homomorphism of $\b$ into the vector fields on $\Gbdle$ via fundamental vector fields, which must be given by $X \mapsto \omega^{-1}(X)$ since $\omega$ is assumed to respect the fundamental vector fields of this action. A direct calculation shows that this map is a Lie algebra homomorphism if and only if $\k^{\omega}(\b,\b) = \{0\}$. And condition (ii) must then also hold, because the $\b$-constant vector fields $\omega^{-1}(X)$ are fundamental vector fields of a global action, and therefore complete.\\

\no Furthermore, since $B$ is connected, the fibers of $\pi_B: \Gbdle \rightarrow N$ correspond to the leaves $\mathcal{L}$ generated by $\b$-constant vector fields. Since $B$ acts simply and transitively on the fibers, choosing any point $u \in \mathcal{L}$ determines a function $f_u \in C^{\infty}(\mathcal{L},B)$ by $f_u: u.b \mapsto b$, and we see that the Darboux derivative of $f_u$ is given by $f_u^*\omega_B = \omega_{\mid \mathcal{L}}.$ By the global fundamental theorem of calculus (cf. theorem 3.7.14 in \cite{Sha}), the monodromy representation $\Phi_{\omega_{\mid \mathcal{L}}}: \pi_1(\mathcal{L},u) \rightarrow B$ must be trivial, as in condition (iii). Lastly, if $\pi_{B/H}:M \rightarrow N$ is a fiber bundle with $\pi_B = \pi_{B/H} \circ \pi_H$, then any fiber of $\pi_{B/H}$ must coincide with the image under $\pi_H$ of a fiber of $\pi_B$, and hence the $\pi_H(\mathcal{L})$ foliate $M$ and must have trivial leaf holonomy.\\

\no ($\Leftarrow$) We saw above that condition (i) implies that $X \mapsto \omega^{-1}(X)$ gives a Lie algebra homomorphism from $\b$ into the vector fields on $\Gbdle$, which we shall denote with $\phi$. By property (\ref{parallelism}) defining a Cartan connection, the distribution $T^{\b}\Gbdle := \mathrm{im}(\phi)$ has constant rank equal to the dimension of $\b$, and by (ii), the vector fields spanning this distribution are complete. Thus there exists a unique locally free action of $\tilde{B}$, the universal covering group of $B$, on $\Gbdle$, such that the induced infinitesimal action equals $\phi$ (\cite{Pal}, for the proof cf. also II.3.1 in \cite{Hir}). And it follows that leaves $\mathcal{L}_u$ of the distribution $T^{\b}\Gbdle$, coincide with orbits $\tilde{B}(u)$ of the action.\\

\no Now, by conditions (i) and (iii), the global fundamental theorem of calculus implies that $\omega_{\mathcal{L}_u}$ is a Darboux derivative, in particular there exists a uniquely determined, smooth function $f_u: (\mathcal{L}_u,u) \rightarrow (B,e)$, such that $f_u^*\omega_B = \omega_{\mathcal{L}_u}$. On the other hand, using the $\tilde{B}$ action, $b \mapsto u \cdot b$ gives a smooth map $g_u: \tilde{B} \rightarrow \mathcal{L}_u$, for which one sees that $g_u^*\omega_{\mathcal{L}_u} = \omega_{\tilde{B}}$. Thus we get a smooth map $f_u \circ g_u : (\tilde{B}, \tilde{e}) \rightarrow (B,e),$ with Darboux derivative $\omega_{f_u \circ g_u} = \omega_{\tilde{B}}$. By uniqueness of the primitive, $f_u \circ g_u$ must be the universal covering map, from which it follows that all isotropy groups $\tilde{B}_u$ are isomorphic to $\pi_1(B,e)$, so the $\tilde{B}$-action factors through a $B$-action on $\Gbdle$, which is simple and transitive on the leaves of the distribution $T^{\b}\Gbdle$.\\

\no Since $H \vartriangleleft B$, the $B$-action on $\Gbdle$ preserves the fibers of $\pi_H$, i.e. for any $u, u' \in \Gbdle$ and any $b \in B$, we have $\pi_H(u.b) = \pi_H(u'.b)$ whenever $\pi_H(u) = \pi_H(u')$. Thus we get a well-defined $B$-action on $M$ and it follows that the images $\pi_H(\mathcal{L}_u) \subset M$ of leaves $\mathcal{L}_u = B(u) \subset \Gbdle$ foliate $M$. Since $B$ acts simply transitively on each $\mathcal{L}_u$, we get induced simply transitive actions of $B/H$ on each of these leaves in $M$ (in particular, they are compact). Given condition (iv), a standard result on simple foliations (cf. e.g. corollary 2.8.6 of \cite{Sha}), says that projection onto the leaf space $N$ gives $M$ the structure of a smooth fiber bundle (hence in our case, a principal $B/H$ bundle), which we denote $\pi_{B/H}: M \rightarrow N$. Using local trivializations of $\pi_H:\Gbdle \rightarrow M$ and $\pi_{B/H}:M \rightarrow N$, and the simply transitive action of $B$ on the fibers of $\pi_{B/H} \circ \pi_H$, the rest follows.
\end{proof}

\begin{Theorem} \label{Twistor reduction} Let $(\Gbdle,\pi_H,M,\omega)$ be a Cartan geometry of type $(G,H)$, $M$ connected, and let $B \subset G$ be a connected closed subgroup, such that $H \vartriangleleft B$ and $B/H$ is compact. Then $(\Gbdle,\omega)$ admits a twistor reduction to $(G,B)$ if and only if (i) $\k^{\omega} (\b,\g) = \{0\}$; and (ii)-(iv) of lemma \ref{Twistor Lemma} hold.
\end{Theorem}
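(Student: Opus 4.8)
The plan is to observe that, beyond the purely bundle-theoretic content already packaged in Lemma \ref{Twistor Lemma}, a twistor reduction to $(G,B)$ demands exactly one further ingredient: that the given $\omega$ be a genuine Cartan connection for the $B$-principal structure, i.e. that it satisfy the three axioms (\ref{ad-inv})--(\ref{parallelism}) with respect to the full $B$-action. Axiom (\ref{parallelism}) is inherited verbatim, since $\omega$ is already a pointwise linear isomorphism as the Cartan connection of the $(G,H)$-geometry; and axiom (\ref{fund-vfs}) holds because the $B$-bundle produced by the Lemma has the $\b$-constant fields $\omega^{-1}(X)$ as its fundamental vector fields, so that $\omega(\omega^{-1}(X)) = X$. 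Thus I would reduce the whole theorem to controlling the equivariance axiom (\ref{ad-inv}): we already have $R_h^*\omega = \mathrm{Ad}(h^{-1})\circ\omega$ for $h \in H$, and the task is to upgrade this to all of $B$. This is precisely the place where hypothesis (i) of the theorem, $\k^{\omega}(\b,\g) = \{0\}$, is strictly stronger than hypothesis (i) of the Lemma, $\k^{\omega}(\b,\b) = \{0\}$, which sufficed only to build the bundle.

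For the ($\Leftarrow$) direction I would assume (i) together with (ii)--(iv) of the Lemma; since $\k^{\omega}(\b,\g) = 0$ certainly forces $\k^{\omega}(\b,\b) = 0$, Lemma \ref{Twistor Lemma} applies and supplies the $B$-bundle $\pi_B:\Gbdle \rightarrow N$ and the $B/H$-bundle $M \rightarrow N$. To obtain (\ref{ad-inv}), fix $X \in \b$, set $\zeta := \omega^{-1}(X)$, and let $\Phi_t$ be its flow, which by the Lemma's construction is right multiplication by $\exp(tX)$. Cartan's formula together with $\zeta \: \omega = X$ constant gives that the Lie derivative $\frac{d}{dt}\big|_{t=0}\Phi_t^*\omega$ equals $\zeta \: d\omega$; substituting the structure equation $d\omega(\zeta,v) = K^{\omega}(\zeta,v) - [\omega(\zeta),\omega(v)]$ and using $K^{\omega}(\zeta,v) = \k^{\omega}(X,\omega(v)) = 0$ (this is exactly where (i) enters, as $X \in \b$), I find $\frac{d}{dt}\big|_{t=0}\Phi_t^*\omega = -\mathrm{ad}(X)\circ\omega$. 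Since $\mathrm{ad}(X)$ is a fixed endomorphism of $\g$ commuting with pullback, this propagates to the linear ODE $\frac{d}{dt}\Phi_t^*\omega = -\mathrm{ad}(X)\circ(\Phi_t^*\omega)$, whose solution is $\Phi_t^*\omega = \mathrm{Ad}(\exp(-tX))\circ\omega$. Hence (\ref{ad-inv}) holds for every $b \in \exp(\b)$; as $B$ is connected it is generated by such elements, and the relation $R_{b_1 b_2}^* = R_{b_1}^* R_{b_2}^*$ shows equivariance is closed under products, so (\ref{ad-inv}) holds on all of $B$. Therefore $(\Gbdle,\pi_B,N,\omega)$ is a Cartan geometry of type $(G,B)$, and its correspondence space is exactly $(\Gbdle,\pi_H,\Gbdle/H = M,\omega)$, yielding the desired twistor reduction.

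For ($\Rightarrow$), a twistor reduction presents $(\Gbdle,\pi_H,M,\omega)$ as the correspondence space of a $(G,B)$-geometry, so $\Gbdle$ is a $B$-bundle on which $\omega$ is a $(G,B)$-Cartan connection with $M = \Gbdle/H$; the bundle-theoretic conditions (ii)--(iv) then follow from the forward direction of Lemma \ref{Twistor Lemma}, while horizontality of the curvature of any Cartan geometry---$K^{\omega}$ annihilates the fundamental field $\omega^{-1}(X)$ for $X \in \b$---gives $\k^{\omega}(\b,\g) = \{0\}$, which is (i). The substance of the proof, and the only step genuinely beyond Lemma \ref{Twistor Lemma}, is the passage from infinitesimal to global equivariance in the ($\Leftarrow$) direction; I expect the main obstacle to be the integration of $\frac{d}{dt}\Phi_t^*\omega = -\mathrm{ad}(X)\circ(\Phi_t^*\omega)$ and the reduction to a generating set of $B$, both of which rest essentially on the connectedness of $B$ assumed in the hypotheses.
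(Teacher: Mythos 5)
Your proposal is correct and follows essentially the same route as the paper: both invoke Lemma \ref{Twistor Lemma} to produce the principal $B$-bundle (noting that the theorem's condition (i) implies the lemma's), then reduce the whole theorem to the equivariance axiom (\ref{ad-inv}), which is deduced from $\k^{\omega}(\b,\g)=\{0\}$ by computing the Lie derivative of $\omega$ along the fundamental fields $\omega^{-1}(X)$ and integrating using connectedness of $B$, with necessity of (i) coming from horizontality of the curvature. The only difference is mechanical: the paper packages the integration as a homomorphism $\Psi_u : B \rightarrow Gl(\g)$ determined by its differential $-\mathrm{ad}$, whereas you solve the flow ODE $\frac{d}{dt}\Phi_t^*\omega = -\mathrm{ad}(X)\circ(\Phi_t^*\omega)$ and then pass to products of exponentials; these amount to the same argument.
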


\begin{proof} By lemma \ref{Twistor Lemma}, conditions (i)-(iv) guarantee (and (ii)-(iv) are necessary) that $\Gbdle$ is the total space of a principal $B$ bundle $\pi_B:\Gbdle \rightarrow N$, such that $M = \Gbdle/H$ and the fundamental vector fields of the $B$ action on $\Gbdle$ are respected by $\omega$. Thus the only property remaining to ensure that $(\Gbdle,\pi_B,N,\omega)$ is a Cartan geometry of type $(G,B)$, is (\ref{ad-inv}) with respect to $B$. We claim this is equivalent to condition (i). Consider, for an arbitrary point $u \in \Gbdle$ and $b \in B$, the linear map on $\g$ defined by $X \mapsto (R_b^*\omega)(u)(\omega^{-1}(X))$. By the properties of a $B$-action, it follows that this defines a homomorphism $\Psi_u: B \rightarrow Gl(\g).$ Hence, since $B$ is connected, it is determined by the Lie algebra homomorphism $(\Psi_u)_*: \b \rightarrow \gl(\g)$ which it induces infinitesimally. Substituting $b = exp(tY)$ for arbitrary $Y \in \b$, and differentiating at $t=0$ gives, for $X \in \g$:
\begin{align*}
\left.\frac{d}{dt}\right\vert_{t=0} (R_{exp(tY)}^*\omega)(\omega^{-1}(X)(u)) &= L_{\tilde{Y}}(\omega)(\omega^{-1}(X))(u) \\
&= - \omega([\tilde{Y},\omega^{-1}(X)])(u) = \k^{\omega}(Y,X) - [Y,X].
\end{align*}
\no Thus we see that condition (i) holds if and only if $(\Psi_u)_* = -\mathrm{ad}$, which is equivalent, by the above, to $Ad(B)$-equivariance of $\omega$, i.e. to condition (\ref{ad-inv}).
\end{proof}

\subsection{Holonomy reduction}

In the Fefferman construction, the second step is the extension of the $(\tilde{P} \cap G)$ bundle to a $\tilde{P}$ bundle. The obstruction to doing the ``converse'' of this step is captured in the notion of reduction, which is naturally related to holonomy:

\begin{Definition} \label{reduction and holonomy} Let $(\tilde{\Gbdle} \rightarrow \tilde{M},\tilde{\omega})$ be a Cartan geometry of type $(\tilde{G},\tilde{P})$. (i) For a closed subgroup $G \subset \tilde{G}$ which acts locally transitively on $\tilde{G}/\tilde{P}$, a \textit{reduction of $(\tilde{\Gbdle},\tilde{\omega})$ to $G$} is given by a Cartan geometry $(\Gbdle \rightarrow M,\omega)$ of type $(G,\tilde{P} \cap G)$ and a reduction $\iota: \Gbdle \hookrightarrow \tilde{\Gbdle}$ such that $\iota^*\tilde{\omega} = \omega$. (ii) The \textit{holonomy of $\tilde{\omega}$} is defined as $Hol(\tilde{\omega}) := Hol(\tilde{\omega}^{ext})$, where $\tilde{\omega}^{ext}$ is the \textit{extension of $\tilde{\omega}$} to a $\tilde{G}$ principal bundle as follows: For $\tilde{\Gbdle}^{ext} := \tilde{\Gbdle} \times_{\tilde{P}} \tilde{G}$ and $j: \tilde{\Gbdle} \hookrightarrow \tilde{\Gbdle}^{ext}$ the obvious inclusion, $\tilde{\omega}^{ext} \in \Omega^1(\tilde{\Gbdle}^{ext},\tilde{\g})$ is the unique principal bundle connection such that $j^*\tilde{\omega}^{ext} = \tilde{\omega}$.
\end{Definition}

The following result follows via the usual holonomy reduction principle for principal connections. See proposition 55 and the proof in \cite{alt}; some verifications which were overlooked there were subsequently carried out in the proof of proposition 5.1 in \cite{hamsag}, which in particular clarifies the necessity of global transitivity of $G$ on $\tilde{G}/\tilde{P}$:

\begin{Proposition} \label{G reduction} Let $G \subset \tilde{G}$ be a closed subgroup which acts globally transitively on $\tilde{G}/\tilde{P}$. A Cartan geometry of type $(\tilde{G},\tilde{P})$ admits a reduction to $G$ if and only if $Hol(\tilde{\omega}) \subseteq G$.
\end{Proposition}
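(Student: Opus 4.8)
The plan is to deduce the statement from the classical holonomy reduction principle for principal connections, applied to the extended connection $\tilde{\omega}^{ext}$ on the principal $\tilde{G}$ bundle $\tilde{\Gbdle}^{ext} = \tilde{\Gbdle} \times_{\tilde{P}} \tilde{G}$, and then to transport the resulting reduction of structure group back and forth across the inclusion $j: \tilde{\Gbdle} \hookrightarrow \tilde{\Gbdle}^{ext}$. Throughout I use that $\tilde{M}$ is connected (so $Hol(\tilde{\omega}) = Hol(\tilde{\omega}^{ext})$ is well defined up to conjugacy) and I write $\g \subseteq \tilde{\g}$ for the Lie algebras and $\tilde{P} \cap G \subseteq \tilde{P}$ for the relevant subgroups.

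For the direction ($\Rightarrow$), suppose a reduction $\iota: \Gbdle \hookrightarrow \tilde{\Gbdle}$ of type $(G, \tilde{P} \cap G)$ with $\iota^*\tilde{\omega} = \omega$ is given. First I would lift $\iota$ to the structure-group extensions: the associated bundle $\Gbdle^{ext} := \Gbdle \times_{\tilde{P} \cap G} G$ is a principal $G$ bundle carrying the canonical extension $\omega^{ext}$ of $\omega$, and the map $[u,g] \mapsto [\iota(u),g]$ realizes $\Gbdle^{ext}$ as a $G$-subbundle of $\tilde{\Gbdle}^{ext}$ covering the identity of $\tilde{M}$. The key point is that $\tilde{\omega}^{ext}$ restricts to $\omega^{ext}$ on this subbundle: both are $G$-equivariant, both reproduce the fundamental vector fields of $\g$, and on the image of $\Gbdle$ (the points $[u,e]$) they agree because $\iota^*\tilde{\omega} = \omega$ pulls back $\omega^{ext}$; $G$-equivariance then forces agreement on all of $\Gbdle^{ext}$. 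Hence $\tilde{\omega}^{ext}$ is reducible to $\Gbdle^{ext}$, its horizontal curves through points of $\Gbdle^{ext}$ remain in $\Gbdle^{ext}$, and therefore $Hol(\tilde{\omega}^{ext}) \subseteq G$.

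For the converse ($\Leftarrow$), assume $Hol(\tilde{\omega}^{ext}) \subseteq G$. Since $G$ is closed in $\tilde{G}$ and $\tilde{M}$ is connected (and paracompact), the reduction theorem for principal connections yields a principal $G$-subbundle $P_G \subseteq \tilde{\Gbdle}^{ext}$ — the $G$-saturation of a holonomy subbundle — to which $\tilde{\omega}^{ext}$ is reducible, so that $\tilde{\omega}^{ext}$ is $\g$-valued along $P_G$. I would then set $\Gbdle := j(\tilde{\Gbdle}) \cap P_G$ and define $\omega := \iota^*\tilde{\omega}$ for the inclusion $\iota: \Gbdle \hookrightarrow \tilde{\Gbdle}$. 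Since $j^*\tilde{\omega}^{ext} = \tilde{\omega}$ and $\tilde{\omega}^{ext}$ is $\g$-valued along $P_G$, the form $\omega$ is automatically $\g$-valued. The remaining Cartan-connection axioms for $(\Gbdle,\omega)$ descend by restriction: equivariance and reproduction of fundamental vector fields hold for the subgroup $\tilde{P}\cap G$, and for the parallelism axiom I combine the dimension count $\dim\Gbdle = \dim\tilde{M} + \dim(\tilde{P}\cap G) = \dim G = \dim\g$ (using local transitivity for $\dim\tilde{M} = \dim G/(\tilde{P}\cap G)$) with injectivity of $\omega(u)$ inherited from $\tilde{\omega}(u)$, to conclude that $\omega(u): T_u\Gbdle \to \g$ is a linear isomorphism.

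The main obstacle, and the precise place where \emph{global} transitivity enters, is to verify in the converse that $\Gbdle = j(\tilde{\Gbdle}) \cap P_G$ really is a principal $(\tilde{P}\cap G)$-subbundle lying over all of $\tilde{M}$, and not merely some smaller locus. Fiberwise over $x \in \tilde{M}$, identifying $\tilde{\Gbdle}^{ext}_x \isom \tilde{G}$ so that $j(\tilde{\Gbdle}_x) \isom \tilde{P}$ and $P_{G,x} \isom g_x G$, the intersection is modelled on $\tilde{P} \cap g_x G$, which is a single $(\tilde{P}\cap G)$-coset and is nonempty exactly when $g_x \in \tilde{P}G$. Global transitivity of $G$ on $\tilde{G}/\tilde{P}$ is precisely the statement $\tilde{G} = G\tilde{P} = \tilde{P}G$, which guarantees nonemptiness over every $x$ and hence that $\Gbdle \to \tilde{M}$ is surjective; local transitivity alone would control only an open subset and could leave $\Gbdle$ failing to cover $\tilde{M}$. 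I would also record the mild hypotheses needed for the reduction theorem (closedness of $G$, connectedness and paracompactness of the base) and check that the two constructions are mutually inverse so the biconditional is genuinely established. This intersection-and-surjectivity verification is exactly the delicate step flagged in \cite{hamsag}.
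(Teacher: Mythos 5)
Your proposal is correct and takes essentially the same route as the paper: the paper's proof (deferred to proposition 55 of \cite{alt}, with the overlooked verifications completed in proposition 5.1 of \cite{hamsag}) is precisely the classical holonomy reduction principle applied to the extended connection $\tilde{\omega}^{ext}$, followed by intersecting the resulting principal $G$-subbundle with $j(\tilde{\Gbdle})$. Moreover, your identification of the fiberwise nonemptiness of $j(\tilde{\Gbdle}) \cap P_G$ --- i.e.\ the condition $\tilde{G} = \tilde{P}G$ --- as the precise point requiring \emph{global} rather than local transitivity is exactly the clarification the paper attributes to \cite{hamsag}.
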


Thus we get the following global characterization of a class of Cartan geometries admitting Fefferman reductions:

\begin{Theorem} \label{Fefferman reduction} Let $(\tilde{\Gbdle} \rightarrow \tilde{M}, \tilde{\omega})$ be a Cartan geometry of type $(\tilde{G},\tilde{P})$, $\tilde{M}$ connected, $G$ a closed subgroup of $\tilde{G}$ which is transitive on $\tilde{G}/\tilde{P}$, and $P \subset G$ a closed, connected subgroup such that $\tilde{P} \cap G \vartriangleleft P$ and $P/(G \cap \tilde{P})$ is compact. Then $(\tilde{\Gbdle},\tilde{\omega})$ admits a global Fefferman reduction to $(G,P)$ if and only if $Hol(\tilde{\omega}) \subseteq G$ and the reduction to $G$ satisfies the hypotheses of theorem \ref{Twistor reduction} for $G \cap \tilde{P} \vartriangleleft P$.
\end{Theorem}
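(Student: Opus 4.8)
The plan is to realise a Fefferman reduction as the two-step composition of a holonomy reduction (Proposition \ref{G reduction}) followed by a twistor reduction (Theorem \ref{Twistor reduction}), mirroring the factorization of the Fefferman construction itself. Recall from the beginning of this section that the construction applied to a geometry $(\Gbdle \rightarrow N,\omega)$ of type $(G,P)$ proceeds in two stages: first one forms the correspondence space $\tilde{M} := \Gbdle/(G \cap \tilde{P})$, obtaining $(\Gbdle \rightarrow \tilde{M},\omega)$ of type $(G, G \cap \tilde{P})$; then one extends the structure group to $\tilde{\Gbdle} := \Gbdle \times_{G \cap \tilde{P}} \tilde{P}$, carrying the canonically extended connection $\tilde{\omega}$. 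The hypotheses $\tilde{P} \cap G \vartriangleleft P$ and $P/(G \cap \tilde{P})$ compact are precisely those needed to invoke Theorem \ref{Twistor reduction} with $H = G \cap \tilde{P}$ and $B = P$, so the whole argument amounts to concatenating the two cited results with careful bookkeeping of which bundle sits over which base.

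For the direction ($\Leftarrow$), assume $Hol(\tilde{\omega}) \subseteq G$ and that the reduction to $G$ satisfies the hypotheses of Theorem \ref{Twistor reduction}. Since $G$ acts globally transitively on $\tilde{G}/\tilde{P}$, Proposition \ref{G reduction} furnishes a reduction $\iota: \Gbdle \hookrightarrow \tilde{\Gbdle}$ to a geometry $(\Gbdle \rightarrow \tilde{M},\omega)$ of type $(G, G \cap \tilde{P})$ with $\iota^*\tilde{\omega} = \omega$; the base is $\Gbdle/(G \cap \tilde{P}) = \tilde{M}$ because $\iota$ covers the identity. As a reduction of structure group from $\tilde{P}$ to $G \cap \tilde{P}$, this canonically identifies $\tilde{\Gbdle} \isom \Gbdle \times_{G \cap \tilde{P}} \tilde{P}$, and by uniqueness of the extended Cartan connection (it is determined by its restriction $\omega$ on $\Gbdle$ together with (\ref{ad-inv}) and (\ref{fund-vfs}) for $\tilde{P}, \tilde{\p}$) the identification carries $\tilde{\omega}$ to the Fefferman extension of $\omega$. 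Applying Theorem \ref{Twistor reduction} with $H = G \cap \tilde{P}$, $B = P$ then exhibits $(\Gbdle \rightarrow \tilde{M},\omega)$ as the correspondence space of some $(\Gbdle \rightarrow N,\omega)$ of type $(G,P)$, with $N = \Gbdle/P$. Composing the two stages shows $(\tilde{\Gbdle},\tilde{\omega})$ is exactly the Fefferman space of $(\Gbdle \rightarrow N,\omega)$.

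For the direction ($\Rightarrow$), a Fefferman reduction provides a geometry $(\Gbdle \rightarrow N,\omega)$ of type $(G,P)$ whose Fefferman space is $(\tilde{\Gbdle},\tilde{\omega})$. Reading off the intermediate correspondence space $(\Gbdle \rightarrow \tilde{M},\omega)$ of type $(G, G \cap \tilde{P})$ and the inclusion $\Gbdle \hookrightarrow \tilde{\Gbdle} = \Gbdle \times_{G \cap \tilde{P}} \tilde{P}$, one obtains a reduction to $G$ with $\iota^*\tilde{\omega} = \omega$, whence $Hol(\tilde{\omega}) \subseteq G$ by Proposition \ref{G reduction}. Moreover, the fact that $(\Gbdle \rightarrow \tilde{M},\omega)$ is a correspondence space means it admits a twistor reduction to $(G,P)$, so the hypotheses of Theorem \ref{Twistor reduction} hold for it: conditions (ii)--(iv) are necessary by Lemma \ref{Twistor Lemma}, while condition (i), $\k^{\omega}(\b,\g) = \{0\}$, holds because $\omega$ is an honest type-$(G,P)$ connection and hence $Ad(P)$-equivariant, which is exactly the equivalence established within the proof of Theorem \ref{Twistor reduction}. (Since any two reductions to $G$ differ only by the $\tilde{P}$-action and the twistor conditions are intrinsic to the reduced geometry, the choice of reduction is immaterial.)

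I expect the one genuinely delicate point to be the compatibility verification in ($\Leftarrow$): that the abstract holonomy reduction produced by Proposition \ref{G reduction} truly exhibits $\tilde{\Gbdle}$ as the associated bundle $\Gbdle \times_{G \cap \tilde{P}} \tilde{P}$ with $\tilde{\omega}$ equal to its canonical extension, so that re-extending the twistor-reduced geometry recovers $(\tilde{\Gbdle},\tilde{\omega})$ itself and not merely an abstractly isomorphic copy. Beyond this, the content is entirely carried by the two cited results, and the theorem is essentially their composition under the stated normal-subgroup and compactness hypotheses.
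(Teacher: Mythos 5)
Your proposal is correct and takes essentially the same route as the paper, which presents this theorem as an immediate consequence of composing Proposition \ref{G reduction} (holonomy reduction to $G$) with Theorem \ref{Twistor reduction} (twistor reduction from $G \cap \tilde{P}$ to $P$), exactly the two-step factorization you describe. Your explicit check of the ``delicate point''---that the holonomy reduction identifies $\tilde{\Gbdle}$ with $\Gbdle \times_{G \cap \tilde{P}} \tilde{P}$ carrying the canonical extension of $\omega$, by uniqueness of the extended Cartan connection---is sound and merely makes explicit the bookkeeping the paper leaves implicit.
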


\begin{Remark} \label{local remark} In the author's dissertation (cf. proposition 57 of \cite{alt}), it was observed that, starting with a parabolic geometry of type $(\tilde{G},\tilde{P})$, and given a closed subset $G \subset \tilde{G}$ acting transitively on $\tilde{G}/\tilde{P}$ and $P \subset G$ closed with $\tilde{P} \cap G \subset P$, then $Hol(\tilde{\omega}) \subseteq G$ and condition (i) of theorem \ref{Twistor reduction} are suffficient to guarantee that the geometry of type $(\tilde{G},\tilde{P})$ has a local Fefferman reduction to $(G,P)$. The proof amounts to noticing that the relevant parts of the proofs of proposition 2.6 and theorem 2.7 in \cite{C05}, transfer with only minor changes to this setting, and we omit it here. We will implicitly make use of this fact in the sequel, to note that the conditions on holonomy and curvature in many cases are sufficient to give a local version of our results, and that the global assumptions of regularity, completeness, etc. can just be omitted if we're only interested in local geometry.
\end{Remark}

\section{Geometry of qc Fefferman spaces}

\subsection{The qc Fefferman construction and holonomy reduction}

Now we introduce the specific Fefferman constructions which we'll be considering in the sequel and collect some notation. In general, $n \geq 1$ will be fixed and $(M,\Dbdle,\mathbb{Q},[g])$ is a qc manifold of dimension $4n+3$, with canonical parabolic geometry of type $(G^{qc},P^{qc})$, distinguished by the notation $(\Gbdle^{qc} \rightarrow M,\omega^{qc})$. We have natural inclusions $$G^{qc} \hookrightarrow G^{cr} = G^{cr}_{2n+2,2} \hookrightarrow G^{co} = G^{co}_{4n+4,4},$$ and these groups are meant whenever subscripts are omitted. Accordingly, we write $(N,\Hbdle,J)$ to indicate a CR manifold of dimension $4n+5$, with Levi-form of (real) signature $(4n+2,2)$, and $(F,[f])$ will be a conformal manifold of signature $(4n+3,3)$. We denote the canonical parabolic geometries of types $(G^{cr},P^{cr})$ and $(G^{co},P^{co})$ in the same way as for a qc manifold, e.g. $(\Gbdle^{cr} \rightarrow N,\omega^{cr})$.\\

It is well known that $G^{qc}$ is locally transitive on $G^{cr}/P^{cr}$ and on $G^{co}/P^{co}$ (as is $G^{cr}$), while the relations $P^{qc} \subset (P^{cr} \cap G^{qc}) \subset (P^{co} \cap G^{qc})$ follow immediately from the definitions given in example \ref{qc parabolics}. (We note moreover that the subgroup $P^{co} \cap G^{qc}$ is normal in $P^{qc}$, with $P^{qc}/(P^{co} \cap G^{qc}) \isom SO(3)$, while $P^{co} \cap G^{cr} \vartriangleleft P^{cr}$ and $P^{cr}/(P^{co} \cap G^{cr}) \isom U(1) = S^1$. Information about the inclusions at the Lie algebra level is given in appendix A.) Hence from the general construction, $(\Gbdle^{qc} \rightarrow M,\omega^{qc})$ induces a Cartan geometry of CR type $(G^{cr},P^{cr})$, and one of conformal type $(G^{co},P^{co})$, and we will denote these as $(\widetilde{\Gbdle^{qc}} \rightarrow N_{qc},\widetilde{\omega^{qc}})$ and $(\overline{\Gbdle^{qc}} \rightarrow F_{qc},\overline{\omega^{qc}})$, respectively.\\

Especially the second Cartan geometry is of interest to us, and we refer to it (as well as to the conformal structure $(F_{qc},[f_{qc}])$ it induces) as \textit{the qc Fefferman space (of $M$)}. In section 5, we show that the Cartan geometry $(\overline{\Gbdle^{qc}} \rightarrow F_{qc},\overline{\omega^{qc}})$ is isomorphic to the canonical Cartan geometry of $(F_{qc},[f_{qc}])$ (and the corresponding fact for $(\widetilde{\Gbdle^{qc}} \rightarrow N_{qc},\widetilde{\omega^{qc}})$ and its induced CR structure $(N_{qc},\Hbdle_{qc},J_{qc})$), which in particular implies that $Hol(F_{qc},[f_{qc}]) := Hol(\omega^{co}) \subseteq G^{qc}$, i.e. (i) $\Rightarrow$ (ii) of theorem A. For the moment, we prove part of the converse: If $Hol(F,[f]) \subseteq G^{qc}$, then $(\Gbdle^{co} \rightarrow F,\omega^{co})$ locally has a Fefferman reduction to $(G^{qc},P^{qc})$, and the corresponding global result. Note that this does not yet establish (ii) $\Rightarrow$ (i) of theorem A, for which we need to show that the Cartan geometry of type $(G^{qc},P^{qc})$ which (locally) induces $(\Gbdle^{co} \rightarrow F,\omega^{co})$ is isomorphic to the canonical Cartan geometry of a qc manifold, i.e. normal.

\begin{Proposition} \label{local reductions}
If $Hol(F,[f]) \subseteq G^{cr}$, then its canonical conformal Cartan geometry $(\Gbdle^{co},\omega^{co})$ admits a local Fefferman reduction to $(G^{cr},P^{cr})$. If $Hol(F,[f]) \subseteq G^{qc}$, then it admits a local Fefferman reduction to $(G^{qc},P^{qc})$. In these cases, we denote the Cartan geometries of types $(G^{cr},P^{cr})$ and $(G^{qc},P^{qc})$, which locally induce $(\Gbdle^{co} \rightarrow F,\omega^{co})$, by $(\Gbdle^{cr} \rightarrow N_{co},j_{cr}^*\omega^{co})$ and $(\Gbdle^{qc} \rightarrow M_{co},j_{qc}^*\omega^{co})$, respectively.
\end{Proposition}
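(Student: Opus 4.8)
The plan is to handle both reductions uniformly and to route everything through Remark \ref{local remark}: for $(G,P) = (G^{cr},P^{cr})$ or $(G^{qc},P^{qc})$, a local Fefferman reduction of the normal conformal geometry $(\Gbdle^{co},\omega^{co})$ to $(G,P)$ exists as soon as (a) $G$ acts (locally) transitively on $G^{co}/P^{co}$ and $P^{co}\cap G \subset P$; (b) $Hol(\omega^{co}) \subseteq G$; and (c) condition (i) of Theorem \ref{Twistor reduction} holds for the reduced connection, i.e. $\k^\omega(\b,\g) = \{0\}$ with $\b = \p^{cr}$ (resp. $\p^{qc}$) and $\g = \g^{cr}$ (resp. $\g^{qc}$). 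Item (a) is exactly what was recorded for these groups in Section 4.1 and Example \ref{qc parabolics}, and (b) is the hypothesis, so the entire proof reduces to verifying (c).

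First I would make the $G$-reduction explicit. Applying Proposition \ref{G reduction} in its local form (Remark \ref{local remark}), the holonomy hypothesis furnishes an embedding $\iota:\Gbdle \hookrightarrow \Gbdle^{co}$ of a Cartan geometry of type $(G, G\cap P^{co})$ with $\omega = \iota^*\omega^{co}$; its curvature function is then the restriction $\k^\omega = \iota^*\k^{co}$, and by the holonomy reduction it takes values in $\g \subset \g^{co}$. Since $\k^\omega$ is horizontal, (c) is equivalent to the vanishing of $\k^{co}$ on the image of $\b$ in $\g^{co}/\p^{co} \isom \g^{co}_{-1}$. Using the Lie-algebra inclusions of Appendix A, I would identify this image as the vertical subspace $V$ of the twistor fibration: it is the image of the fibre subalgebra $\u(1)$ (resp. $\sp(1) = \mathrm{Im}(\H)$), a null line (resp. a null $3$-plane) inside $\g^{co}_{-1}$, reflecting the Fefferman phenomenon that the ``degree-zero'' fibre directions of the source geometry become genuine tangent directions after passing to the $\vert 1 \vert$-graded conformal algebra. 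Thus (c) becomes the concrete claim $\k^{co}(v,\cdot)=0$ for all $v \in V$.

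The crux, and the step I expect to be the main obstacle, is this last claim: that a \emph{normal} conformal curvature whose values are constrained by $Hol \subseteq G$ must annihilate the fibre directions $V$. Normality gives $\delp \k^{co} = 0$, so $\k^{co}$ differs from its $\Box$-harmonic part only by a $\diff$-exact term; the harmonic part is identified, via Kostant's theorem, with the conformal Weyl curvature inside $H^2(\g^{co}_-,\g^{co})$, which the holonomy reduction forces into the $\g$-valued summand. I would first establish the purely algebraic statement that a $\delp$-closed, $\g$-valued harmonic $2$-cochain on $\g^{co}_-$ vanishes on $V$ -- checked either by direct computation in the matrix model \eqref{matrix form} of $\g^{qc}\subset\g^{cr}\subset\g^{co}$ (tracking how the $\vert 1\vert$-grading of $\g^{co}$ cuts across the $\vert 2\vert$-grading of $\g^{qc}$), or, more invariantly, by a Bott--Borel--Weil analysis of the relevant $G_0$-modules.

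The genuine difficulty is then the propagation from the harmonic part to the full curvature. The obvious tool, Proposition \ref{harmonic curvature}(ii), does not apply verbatim, because ``vanishing on $V$'' is not a $P^{co}$-invariant condition ($V$ is preserved only by $Sp(n+1,1)\cap P^{co}$), so it cannot be packaged into a $P^{co}$-submodule $\mathbb{E}$ of the conformal cochains. I would circumvent this either by running the Bernstein--Gelfand--Gelfand propagation argument $H$-equivariantly on the reduced geometry, where $V = \b/\h$ \emph{is} invariant, or by a tractor-calculus argument: the reduction provides parallel tractor structures $\mathbb{I},\mathbb{J},\mathbb{K}$ whose products with the canonical tractor project onto the $\Bbbk_s$, so that parallelity together with the algebraic properties of the normal tractor curvature forces the contractions $\Bbbk_s \: \Omega$ to vanish. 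Making this step rigorous while keeping the bookkeeping of the two incompatible gradings uniform across the CR and qc cases is where the real work lies; the CR case ($V$ one-dimensional) parallels \v{C}ap--Gover and can be used to organize the qc case, whose vertical $3$-plane contains it.
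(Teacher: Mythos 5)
Your skeleton coincides with the paper's: apply Proposition \ref{G reduction} (in the local form of Remark \ref{local remark}) to obtain the $G$-reduction $\iota:\Gbdle \hookrightarrow \Gbdle^{co}$, and then verify condition (i) of Theorem \ref{Twistor reduction}, which by horizontality of the Cartan curvature amounts to showing that $\k^{co}$ annihilates insertions of the directions that span $\p^{cr}$ (resp. $\p^{qc}$) modulo $\p^{co}$. Up to that point your proposal is correct. The genuine gap is that you leave exactly this step unproved: you offer two sketches (an ``$H$-equivariant'' BGG propagation, and a tractor argument) and explicitly defer the real work. But no harmonic-curvature or BGG analysis is needed here at all; the step is closed in one stroke by a citable result, which the paper states as Proposition \ref{parallel adjoint Tractors} (Gover, prop.\ 2.2 of \cite{G06}): a $\nabla^{\Abdle}$-parallel section $s$ of $\Abdle^{co}$ projects to a conformal Killing field $\Bbbk = \Pi^{co}\circ s$ satisfying $\Bbbk \: K^{co} = 0$, i.e.\ $\k^{co}(u)(s(u),\cdot)=0$ for all $u$. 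The holonomy hypothesis hands you exactly such sections: a parallel, $f^{\Tbdle}$-skew complex structure on $\Tbdle^{co}$ (resp.\ three of them with the quaternionic relations) is, under $\so(\Tbdle^{co},f^{\Tbdle}) \isom \Abdle^{co}$, a parallel adjoint tractor $s$ (resp.\ $s_1,s_2,s_3$). The only remaining points are algebraic: on the reduced bundle $\Gbdle^{cr}\subset\Gbdle^{co}$ (resp.\ $\Gbdle^{qc}$) the equivariant function $s$ is constant on fibres, and $\p^{cr} = \ll s(u),\p^{co}\cap\g^{cr}\gg$ (resp.\ $\p^{qc} = \ll s_1(u),s_2(u),s_3(u),\p^{co}\cap\g^{qc}\gg$). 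Together with $\k^{co}(\p^{co},\cdot)=0$ this is precisely condition (i).

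A secondary criticism: your primary route (identify the harmonic part, then propagate) is a detour that, as you yourself observe, collapses on the invariance problem --- ``vanishing on $V$'' is not a $P^{co}$-submodule condition, so Proposition \ref{harmonic curvature}(ii) cannot be invoked --- and the paper never needs that machinery for this proposition; harmonic-curvature and Kostant/BBW arguments belong to the converse direction (Theorem \ref{normality}), where normality of the \emph{induced} geometry is at stake. Your second alternative (``parallel tractor structures force $\Bbbk_s \: \Omega = 0$'') is indeed the right mechanism, but as written it is an assertion rather than an argument: its content is exactly Proposition \ref{parallel adjoint Tractors}, and without invoking or proving that result your proof is incomplete.
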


\begin{proof} The parabolic subgroup $P^{co} \subset G^{co}$ may be identified with a subgroup of $O(\R^{4n+4,4})$ and thus we have the standard conformal tractor bundle $\Tbdle^{co} = \Gbdle^{co} \times_{P^{co}} \R^{4n+8}$, induced by the restriction of the standard representation of $O(\R^{4n+4,4})$. $\Tbdle^{co}$ has a canonical linear connection $\nabla^{\Tbdle}$ induced by $\omega^{co}$, and a $\nabla^{\Tbdle}$-parallel metric $f^{\Tbdle}$ of signature $(4n+4,4)$ induced by invariance from $Q_{4n+4,4}$. Reduction of the conformal holonomy of $(F,[f])$ to $G^{cr}$ (resp. to $G^{qc}$) implies the existence of a $\nabla^{\Tbdle}$-parallel complex structure on $\Tbdle^{co}$ which is skew-symmetric with respect to $f^{\Tbdle}$ (resp. is equivalent to existence of three such complex structures satisfying the quaternionic commutator relations). One also has the adjoint tractor bundle $\Abdle^{co} = \Gbdle^{co} \times_{P^{co}} \g^{co} \isom \so(\Tbdle^{co},f^{\Tbdle})$, with its induced covariant derivative $\nabla^{\Abdle}$, and such complex structures can be identified with $\nabla^{\Abdle}$-parallel sections of $\Abdle^{co}$. Denote such an adjoint tractor by $s$ (resp. $s_1,s_2,s_3$). Now apply the following result, proposition 2.2 of \cite{G06}:

\begin{Proposition} (\cite{G06}) \label{parallel adjoint Tractors}
Let $s \in \Gamma(\Abdle^{co})$ be a $\nabla^{\Abdle}$-parallel section and let $\Bbbk$ denote the underlying vector field given by $\Bbbk = \Pi^{co} \circ s$ for $\Pi^{co}: \Abdle^{co} \rightarrow TF$. Then $\Bbbk$ is a conformal Killing field which also satisfies
\begin{align}
\Bbbk \: K^{co} = 0 \label{star}
\end{align}
\noindent for the curvature two-form $K^{co} \in \Omega^2(F;\Abdle^{co})$ of $\omega^{co}$. Moreover, this gives a bijection between parallel sections of $\Abdle^{co}$ and conformal Killing fields satisfying (\ref{star}).
\end{Proposition}

\no Now, by definition of the curvature function $\k^{co}$, the $\Abdle^{co}$-valued curvature 2-form $K^{co}$, and the projection $\Pi^{co}: \Abdle^{co} \rightarrow TF$, we have $\k^{co}(u)(s(u),.) = K^{co}(\Bbbk(\pi(u)),.)=0$ (and the respective identities for $s_1, s_2,s_3$) for any $u \in \Gbdle^{co}$ (identifying the section $s \in \Gamma(\Abdle^{co})$ with the corresponding $P^{co}$-equivariant, $\g^{co}$-valued function on $\Gbdle^{co}$). From the definitions of $G^{cr}$ and $P^{cr}$ (resp. of $G^{qc}$ and $P^{qc}$), we see first that restricted to the $P^{co} \cap G^{cr}$ reduction $\Gbdle^{cr} \subset \Gbdle^{co}$, the function $s$ is constant on fibers (and hence globally constant), and second that $\p^{cr} = \ll s(u) , \p^{co} \cap \g^{cr} \gg$ for an arbitrary $u \in \Gbdle^{cr}$. (Resp. $s_1,s_2,s_3$ are constant on $\Gbdle^{qc}$, and $\p^{qc} = \ll s_1(u) , s_2(u) , s_3(u) , \p^{co} \cap \g^{qc} \gg$ for any $u \in \Gbdle^{qc} \subset \Gbdle^{co}$.) This gives condition (i) of theorem \ref{Twistor reduction}, and hence the local Fefferman reduction to $(G^{cr},P^{cr})$ (resp. $(G^{qc},P^{qc})$).
\end{proof}

The global conditions (ii)-(iv) of theorem \ref{Twistor reduction} also simplify in this case; we'll discuss them for the reduction to $G^{qc}$, the $G^{cr}$ case being handled similarly. We claim first that the $\p^{qc}$-constant vector fields on $\Gbdle^{qc}$ are complete whenever the $\Bbbk_i$ are. If $\Bbbk_i$ is complete, then this means it induces a one-paramter family of global conformal diffeomorphisms of $(F,[f])$. By uniqueness, these induce automorphisms of the canonical Cartan geometry $(\Gbdle^{co} \rightarrow F,\omega^{co})$, and differentiating determines a global vector field $\tilde{\Bbbk}_i$ on $\Gbdle^{co}$, which is a lift of $\Bbbk_i$ (and hence right-invariant) and complete. This vector field corresponds to the adjoint tractor $s_i$ under the bijection between $\Gamma(\Abdle^{co})$ and the right-invariant vector fields on $\Gbdle^{co}$: the function $\omega^{co} \circ \tilde{\Bbbk}_i \in C^{\infty}(\Gbdle^{co},\g^{co})$ is $P^{co}$-equivariant and corresponds to the section $s_i \in \Gamma(\Abdle^{co})$ (cf. 3.1 of \cite{Cap inf aut}). In the proof of proposition \ref{local reductions}, we noted that the $s_i$ are constant on $\Gbdle^{qc}$ and that $\p^{qc} = \ll s_1(u),s_2(u),s_3(u),\p^{co} \cap \g^{qc} \gg$ for any $u \in \Gbdle^{qc}$. Hence a $\p^{qc}$-constant vector field of $\Gbdle^{qc}$ is a linear combination of the $\tilde{\Bbbk}_i$ and fundamental vector fields, and therefore complete. Applying theorem \ref{Twistor reduction}, we get:

\begin{Proposition} \label{global reductions}
If $Hol(F,[f]) \subseteq G^{qc}$, then $(\Gbdle^{co} \rightarrow F,\omega^{co})$ admits a global Fefferman reduction to $(G^{qc},P^{qc})$ if and only if, in addition, the conformal Killing fields $\Bbbk_1, \Bbbk_2, \Bbbk_3$ determined by this conformal holonomy reduction are complete, and the leaves they generate are regular, compact, and either all simply connected or all having fundamental group $\Z_2$. In the second case, the monodromy with respect to the Cartan connection must also be trivial.
\end{Proposition}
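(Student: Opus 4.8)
The plan is to invoke Theorem \ref{Twistor reduction} directly, since Proposition \ref{local reductions} has already verified its condition (i) (the curvature condition $\k^{\omega}(\b,\g) = \{0\}$) under the hypothesis $Hol(F,[f]) \subseteq G^{qc}$. What remains is to translate the abstract global conditions (ii)--(iv) of that theorem, stated for the $\b$-constant vector fields on $\Gbdle^{qc}$ with $\b = \p^{qc}$, into the geometrically meaningful statements about the conformal Killing fields $\Bbbk_1, \Bbbk_2, \Bbbk_3$ on $F$ asserted in the proposition. The identification $\tilde{P} \cap G = P^{co} \cap G^{qc} \vartriangleleft P^{qc} = P$, together with the fact (noted before the proposition) that $\p^{qc} = \ll s_1(u), s_2(u), s_3(u), \p^{co} \cap \g^{qc} \gg$, is the bridge: a $\p^{qc}$-constant vector field decomposes into a combination of the lifts $\tilde{\Bbbk}_i$ and of fundamental vector fields for $\p^{co} \cap \g^{qc}$.

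First I would dispose of condition (ii), completeness of the $\p^{qc}$-constant vector fields: this is exactly the content of the paragraph immediately preceding the proposition, where it is shown that completeness of the $\Bbbk_i$ lifts to completeness of the $\tilde{\Bbbk}_i$, and the fundamental vector fields of the compact-fibered bundle are automatically complete, so any linear combination is complete. Thus (ii) holds precisely when the $\Bbbk_i$ are complete. Next I would address the leaves: the leaves generated by the $\p^{qc}$-constant fields project under $\pi_{\p^{co} \cap \g^{qc}}$ onto the leaves in $M_{co}$ generated by the span of $\Bbbk_1, \Bbbk_2, \Bbbk_3$, so regularity and compactness of the leaves (as required for conditions (iii), (iv) of Theorem \ref{Twistor reduction}) correspond to regularity and compactness of the leaves downstairs. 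Here I would use that $P^{qc}/(P^{co} \cap G^{qc}) \isom SO(3)$, so $B/H$ is compact and the general machinery applies.

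The subtle point, and what I expect to be the main obstacle, is condition (iii) of Theorem \ref{Twistor reduction}: triviality of the monodromy of $\omega_{\mid T\mathcal{L}}$ on each leaf $\mathcal{L} \subset \Gbdle^{qc}$. This is where the dichotomy ``simply connected or fundamental group $\Z_2$'' arises. The universal covering group of $P^{qc}$ (more precisely of the structure group acting along the leaves) is governed by the covering $S^3 \rightarrow SO(3)$ with kernel $\Z_2$: a leaf generated by $\Bbbk_1, \Bbbk_2, \Bbbk_3$ downstairs is modelled on a quotient of $S^3$, so its fundamental group is either trivial (if the leaf is $S^3$ itself, corresponding to an $S^3$-fibration) or $\Z_2$ (if the leaf is $SO(3) = \R P^3$, corresponding to an $SO(3)$-fibration). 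In the simply connected case the monodromy condition (iii) is automatic; in the $\Z_2$ case one must additionally demand that the monodromy representation $\Phi_{\omega_{\mid \mathcal{L}}}: \pi_1(\mathcal{L}) \rightarrow P^{qc}$ be trivial, since $\pi_1(\mathcal{L}) = \Z_2$ need not map trivially a priori. This explains the extra clause ``the monodromy with respect to the Cartan connection must also be trivial'' in the second case.

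Finally I would note that condition (iv) of Theorem \ref{Twistor reduction}, triviality of the leaf holonomy of the induced foliation on $M_{co}$, is subsumed once the leaves are regular and compact with the stated topology, by the standard foliation argument already cited in the proof of Lemma \ref{Twistor Lemma} (corollary 2.8.6 of \cite{Sha}): a simple foliation by compact leaves with trivial holonomy gives the principal $B/H \isom SO(3)$ bundle structure. Assembling these translations, the equivalence follows: $Hol(F,[f]) \subseteq G^{qc}$ gives (i) of Theorem \ref{Twistor reduction} via Proposition \ref{local reductions}, and the remaining stated conditions on the $\Bbbk_i$ are precisely the reformulations of (ii)--(iv), so the global Fefferman reduction to $(G^{qc}, P^{qc})$ exists if and only if those conditions hold.
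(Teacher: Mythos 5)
Your proposal follows essentially the same route as the paper: the paper's own proof of this proposition consists of nothing more than the completeness discussion in the paragraph immediately preceding it, followed by a direct appeal to theorem \ref{Twistor reduction}, with the curvature condition (i) already secured in the proof of proposition \ref{local reductions}; your translation of conditions (ii)--(iv) of that theorem into statements about $\Bbbk_1,\Bbbk_2,\Bbbk_3$ and the leaves they generate is exactly the translation the paper leaves implicit. One side remark should be corrected, though, since it is mathematically false as stated: a compact leaf of the distribution spanned by $\Bbbk_1,\Bbbk_2,\Bbbk_3$ is \emph{not} automatically of fundamental group $0$ or $\Z_2$ merely because it is ``a quotient of $S^3$''. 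Completeness of the fields only yields a locally free action of the simply connected group $Sp(1)\cong S^3$ along the leaves, and its compact quotients $S^3/\Gamma$ include lens spaces and other spherical space forms with arbitrary finite fundamental group; the restriction $\pi_1\in\{0,\Z_2\}$ is therefore a genuine \emph{hypothesis} of the proposition -- it is precisely what excludes the quotients over which no principal bundle structure with the required structure group can exist -- and not a consequence of the setup, so the dichotomy does not ``arise'' but is imposed. Finally, a notational slip worth fixing: the leaves generated by $\Bbbk_1,\Bbbk_2,\Bbbk_3$ are submanifolds of $F$ (their leaf space is $M_{co}$), so condition (iv) of theorem \ref{Twistor reduction} concerns the foliation of $F$, not a foliation of $M_{co}$.
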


\subsection{Geometry of the Fefferman reductions}

In addition to the existence of a parabolic geometry of qc type inducing $(\Gbdle^{co},\omega^{co})$, we'd like to know explicitly how the qc structure can be recovered directly from the conformal manifold $(F,[f])$. For this we need to look closer at the adjoint tractors $s_i$ and the connection $\nabla^{\Abdle}$, and so we recall a few standard facts here. (For details, see e.g. Sections 0.6 and 0.7 of \cite{FelHab}.) Any choice of a metric $f \in [f]$ determines an isomorphism $$\Abdle^{co} \isom TF \dsum (\R \times F) \dsum \so(F,f) \dsum T^*F.$$ Using this, write  $[s_i]_f = (\gamma_i,-\alpha_i,\mathbf{K}_i,\Bbbk_i)^t$, for $\gamma_i \in \Omega^1(F), \alpha_i \in C^{\infty}(F), \mathbf{K}_i \in \Gamma(\so(F,f))$ and $\Bbbk_i \in \mathfrak{X}(F)$. Under this identification, denoting with $\nabla$ the Levi-Civita connection for $f$ and $\mathsf{P}$ its Schouten tensor (the symmetric $(0,2)$-tensor determined, for $m := \mathrm{dim}(F)$, by $\mathrm{Ric} + (m-2)\mathsf{P}+ \mathrm{tr}(\mathsf{P})f = 0$), then the connection $\nabla^{\Abdle}$ acts, for $v \in TF$, according to the following expression:
\begin{align}
[\nabla^{\Abdle}_v s_i]_f = \left(\begin{array}{c}
                \nabla_v \gamma_i + \alpha_i \mathsf{P}(v) + \mathsf{P}(v) \circ \mathbf{K}_i \\
                -\gamma_i(v) - v(\alpha_i) + \mathsf{P}(v,\Bbbk_i) \\
                v \alt \gamma_i + \nabla_v \mathbf{K}_i - \Bbbk_i \alt \mathsf{P}(v) \\
                - \alpha_i v - \mathbf{K}_i(v) + \nabla_v \Bbbk_i
                  \end{array}\right). \label{adjoint tractor connection}
\end{align}

In particular, for a parallel adjoint tractor, one computes directly from the last line of $(\ref{adjoint tractor connection})=0$:
\begin{align}
(L_{\Bbbk_i} f)(u,v) &= \Bbbk_i(f(u,v)) - f([\Bbbk_i,u],v) - f(u,[\Bbbk_i,v]) = 2\alpha_if(u,v); \label{rot identity} \\
f(d^{\sharp}\Bbbk_i(u),v) &:= d(f(\Bbbk_i,.))(u,v) = 2f(\mathbf{K}_i(u),v).
\end{align}
\no In particular, the conformal Killing field $\Bbbk_i = \Pi^{co} \circ s_i$ of a parallel adjoint tractor is Killing with respect to a metric $f \in [f]$ if and only if the component $\alpha_i$ vanishes for the decomposition $[s_i]_f$. In any case, one sees from the second line of (\ref{adjoint tractor connection}) that the component $\gamma_i$ is also determined by $\Bbbk_i$ whenever $s_i$ is parallel: $\gamma_i = \mathsf{P}(\Bbbk_i) - d\alpha_i$. (This is a special case of the so-called splitting operator from BGG sequences.) A result of these considerations is the following, cf. lemma 2.5 of \cite{CG06b}:

\begin{Proposition} \label{adjoint tractor trace} (\cite{CG06b}) Let $s_i \in \Abdle^{co}$ be a parallel adjoint tractor and $\Bbbk_i = \Pi^{co} \circ s_i$ the corresponding conformal Killing field. If $\Bbbk_i$ is Killing with respect to a metric $f \in [f]$, then we have the following identity for the component $\mathbf{K}_i$ of $[s_i]_f$ and any pseudo-orthonormal basis $\{e_a\}$ ($\varepsilon_a := f(e_a,e_a) = \pm 1$):
\begin{align}
\sum_{a=1}^{m} \varepsilon_a K^{co}(\mathbf{K}_i(e_a),e_a) = 0. \label{trace formula}
\end{align}
\end{Proposition}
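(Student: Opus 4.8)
The plan is to exploit that $s_i$ is parallel in two complementary ways: algebraically, via the splitting $[s_i]_f$, and differentially, by differentiating the contracted curvature identity supplied by Proposition \ref{parallel adjoint Tractors}. First I would record the consequences of $\nabla^{\Abdle}s_i = 0$ in the scale $f$. Since $\Bbbk_i$ is Killing for $f$, the middle component vanishes, $\alpha_i = 0$, and the bottom line of (\ref{adjoint tractor connection}) gives $\nabla_v\Bbbk_i = \mathbf{K}_i(v)$; that is, the $\so(F,f)$-slot $\mathbf{K}_i$ of $s_i$ is precisely $\nabla\Bbbk_i$ (skew-symmetric, as it must be, which also guarantees the trace in (\ref{trace formula}) is independent of the chosen basis). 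The second line likewise fixes $\gamma_i = \P(\Bbbk_i)$, though this will not be needed.

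Next I would turn the target trace into a divergence of the curvature. By Proposition \ref{parallel adjoint Tractors} we have $\Bbbk_i \: K^{co} = 0$, i.e. $K^{co}(\Bbbk_i, v) = 0$ for all $v$. Covariantly differentiating this identity of $\Abdle^{co}$-valued one-forms in a direction $w$ and using $\nabla_w\Bbbk_i = \mathbf{K}_i(w)$ together with $K^{co}(\Bbbk_i,\cdot)=0$ gives
\begin{align*}
(\nabla_w K^{co})(\Bbbk_i, v) = -K^{co}(\mathbf{K}_i(w), v).
\end{align*}
Setting $w = v = e_a$ and summing against $\varepsilon_a$ over a pseudo-orthonormal basis yields
\begin{align*}
\sum_{a} \varepsilon_a K^{co}(\mathbf{K}_i(e_a), e_a) = -\sum_{a}\varepsilon_a (\nabla_{e_a}K^{co})(\Bbbk_i, e_a),
\end{align*}
so the assertion is equivalent to the vanishing of the $\Bbbk_i$-contraction of the covariant divergence of the tractor curvature.

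Finally I would show this divergence contraction vanishes using normality, $\codiff\k^{co} = 0$, of the canonical conformal Cartan connection. The idea is that for a normal geometry the differential Bianchi identity together with $\codiff\k^{co} = 0$ rewrites the symmetric trace $\sum_a\varepsilon_a(\nabla_{e_a}K^{co})(\,\cdot\,, e_a)$ in terms of expressions quadratic in the curvature $K^{co}$; contracting such terms with $\Bbbk_i$ and invoking $\Bbbk_i \: K^{co} = 0$ once more annihilates them. In conformal language this is the familiar fact that, modulo the contraction conditions $\Bbbk_i \: \W = \Bbbk_i \: \Cot = 0$ which follow from (\ref{star}), the divergence of the Weyl curvature is the Cotton tensor and the divergence of the Cotton tensor is of Bach type, so that the $\Bbbk_i$-contraction of the tractor-curvature divergence collapses onto contractions of $\Bbbk_i$ with these harmonic pieces, all of which are preserved and annihilated by $\Bbbk_i$. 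I expect this last step to be the main obstacle: the precise bookkeeping translating the Lie-algebraic normality $\codiff\k^{co}=0$ and the Bianchi identity into the statement that only terms quadratic in $K^{co}$ survive is delicate, because the conformal codifferential mixes the Weyl and Cotton slots and one must track exactly which traces normality eliminates.
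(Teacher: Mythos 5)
Your reduction steps are correct and, once unwound, are in substance the paper's opening move: with $\alpha_i=0$ and $\mathbf{K}_i=\nabla\Bbbk_i$, differentiating $K^{co}(\Bbbk_i,\cdot)=0$ turns the claimed trace into $-\sum_a\varepsilon_a(\nabla_{e_a}K^{co})(\Bbbk_i,e_a)$, and since the curvature of the normal connection has only a Weyl slot and a Cotton slot, everything reduces to the two identities $\sum_a\varepsilon_a\W(\nabla_{e_a}\Bbbk_i,e_a,u,v)=0$ and $\sum_a\varepsilon_a\Cot(\nabla_{e_a}\Bbbk_i,e_a,u)=0$. The Weyl identity goes through exactly as you suggest: differentiate $\W(u,v,\Bbbk_i,\cdot)=0$ and use the contracted Bianchi identity $\sum_a\varepsilon_a(\nabla_{e_a}\W)(u,v,w,e_a)=(3-n)\Cot(u,v,w)$ together with $\Cot(u,v,\Bbbk_i)=0$ (which follows from $\Bbbk_i\:\Cot=0$ and the cyclic identity of the Cotton tensor). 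The genuine gap is the Cotton identity, and your proposed mechanism for it is not what happens. Normality is a purely algebraic trace condition whose content is already exhausted in saying that $K^{co}$ consists of $\W$ and $\Cot$; the differential tool is the Riemannian Bianchi identity, and if you push the $\Bbbk_i$-contracted divergence of $\Cot$ through it (write $\Cot$ as a divergence of $\W$ and commute the two divergences), the commutator terms do vanish --- but only after decomposing the Riemann tensor into Weyl plus Schouten parts and invoking trace-freeness of $\W$, since $\Bbbk_i$ lands in Schouten factors where $\Bbbk_i\:K^{co}=0$ is useless. More seriously, a term survives that is \emph{linear} in curvature, namely $-\sum_a\varepsilon_a\Cot(u,e_a,\mathbf{K}_i(e_a))$, which by the cyclic identity is $\tfrac{1}{2}\sum_a\varepsilon_a\Cot(\mathbf{K}_i(e_a),e_a,u)$: a multiple of the very trace you are trying to annihilate. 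Nothing ``collapses onto harmonic pieces killed by $\Bbbk_i$''; the computation is self-referential, and it closes only if one observes that the surviving coefficient is $\tfrac12\neq 1$, an argument your sketch does not contain.

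The paper fills this gap with an ingredient your proposal never touches: the remaining slot of the parallelness equation $\nabla^{\Abdle}s_i=0$. Its $\so(F,f)$-component, with $\alpha_i=0$, $\gamma_i=\P(\Bbbk_i)$, $\mathbf{K}_i=\nabla\Bbbk_i$, yields the Hessian identity (\ref{second derivative}), $\nabla_u\nabla_v\Bbbk_i-\nabla_{\nabla_u v}\Bbbk_i=(\Bbbk_i\alt\P(u))(v)-(u\alt\P(\Bbbk_i))(v)$. The paper then differentiates the already-established Weyl trace identity in a normal frame, substitutes this Hessian identity, and kills every resulting term using the symmetries of $\W$ and $\Bbbk_i\:\W=0$. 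So to complete your route you must either import this additional consequence of $\nabla^{\Abdle}s_i=0$ or carry out the self-referential divergence bookkeeping above; as written, the final step asserts a vanishing that simply does not follow from $\Bbbk_i\:K^{co}=0$ and normality alone.
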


\begin{proof} Note that the left-hand side of (\ref{trace formula}) always equals $\sum_{a=1}^{m} \varepsilon_a K^{co}(\nabla_{e_a} \Bbbk_i, e_a)$, simply by using the last line of (\ref{adjoint tractor connection}) for $\nabla^{\Abdle} s_i = 0$, and skew-symmetry of $K^{co}$. To show that this vanishes, note that it is equivalent, using the well-known form for the curvature of the normal conformal Cartan connection, to showing that the identities $\sum_a \varepsilon_a \W(\nabla_{e_a}\Bbbk_i,e_a,u,v) = 0$ and $\sum_a \varepsilon_a \Cot(\nabla_{e_a}\Bbbk,e_a,u) = 0$ both hold, for $\W$ the Weyl tensor and $\Cot$ the Cotton-York tensor of $f$ (considered as a $(0,3)$-tensor with the first two components skew-symmetric) and arbitrary vectors $u, v \in TF$.\\

\no From proposition \ref{parallel adjoint Tractors}, $\Bbbk_i \: K^{\omega}$ vanishes identically, which is equivalent to $$\W(\Bbbk_i,u,v,w) = \Cot(\Bbbk_i,u,v) = 0 \,\, \mathrm{for} \,\, \mathrm{all} \,\, u, v, w \in TF.$$ And it follows, since $s_i$ is parallel, that $\Cot(u,v,\Bbbk_i) = 0$. As a well-known consequence of the semi-Riemannian Bianchi identity, we have: $\sum_a \varepsilon_a (\nabla_{e_a}\W)(u,v,w,e_a) = (3-n)\Cot(u,v,w)$. Plugging in $\Bbbk_i$ for $w$ in this equality, we therefore get:
\begin{align*}
0 = \sum_a \varepsilon_a (\nabla_{e_a} \W) (u,v,\Bbbk_i,e_a) = -\sum_a \varepsilon_a \W(u,v,\nabla_{e_a}\Bbbk_i,e_a),
\end{align*}
\no and thus the first of the two terms considered at the outset vanishes as required, using the symmetries of the Weyl tensor.\\

\no Now we use the fact that $\Bbbk_i$ is Killing with respect to the metric $f$. As already noted, this is equivalent to the function $\alpha_i$ vanishing, which implies that $\gamma_i = \mathsf{P}(\Bbbk_i)$ and $\mathbf{K}_i = \nabla \Bbbk_i$. Then the second line of the equations for $\nabla^{\Abdle} s_i = 0$ in (\ref{adjoint tractor connection}), becomes
\begin{align}
\nabla_u \nabla_v \Bbbk_i - \nabla_{\nabla_u v} \Bbbk_i &= (\Bbbk_i \alt \mathsf{P}(u))(v) - (u \alt \mathsf{P}(\Bbbk_i))(v). \label{second derivative}
\end{align}
\no Since the quantities under consideration are tensorial, we may take our local pseudo-orthonormal basis to be normal in an arbitrary point, and compute:
\begin{align*}
(3-n)\sum_a \varepsilon_a \Cot(\nabla_{e_a}\Bbbk_i,e_a,u) &= \sum_{a,b} \varepsilon_b \varepsilon_a (\nabla_{e_b} \W) (\nabla_{e_a} \Bbbk_i, e_a, u, e_b) \\
&= - \sum_{a,b} \varepsilon_b \varepsilon_a \W(\nabla_{e_b}\nabla_{e_a}\Bbbk_i,e_a,u,e_b),
\end{align*}
\no using the vanishing of $\sum_a \varepsilon_a \W(\nabla_{e_a}\Bbbk_i,e_a,u,v)$, shown above, and normality of the local frame. Finally, using normality and plugging in the identity (\ref{second derivative}) to this last line, one sees by expanding the terms from the right-hand side of (\ref{second derivative}) that the result vanishes, since all terms include either a multiple of $\Bbbk_i$, $e_a$ or $e_b$, and using the symmetries of $\W$.
\end{proof}

Further information about $s_i \in \Gamma(\Abdle^{co})$ comes directly from the fact that it determines an almost complex structure on $\Tbdle^{co}$. The following lemma is shown by squaring the matrix representation of $s_i$ and comparing the result with $-\mathrm{Id}$:

\begin{Lemma} \label{Felipes Lemma} (\cite{L05}) Let $[s_i]_f = (\gamma_i,-\alpha_i,\mathbf{K}_i,\Bbbk_i)^t$ for a section $s_i$ of the adjoint tractor bundle $\Abdle^{co}$ and a metric $f \in [f]$ as above. Then $s_i^2 = -\mathrm{Id}_{\Tbdle}$ if and only if the following hold: (i) $\Bbbk_i$ and $-\gamma_i^{\sharp}$ are light-like eigenvectors of $\mathbf{K}_i$ for the eigenfunction $\alpha_i$; (ii) $\gamma_i(\Bbbk_i) + \alpha_i^2 = -1$; (iii) $\mathbf{K}_i$ defines an almost complex structure on the co-rank 2 distribution $\mathcal{H}_{s_i,f} \subset TF$ formed by vectors which are $f$-orthogonal to both $\Bbbk_i$ and $\gamma_i^{\sharp}$.
\end{Lemma}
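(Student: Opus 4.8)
The plan is to make the hint precise. Fix a metric $f\in[f]$, which splits the standard tractor bundle as $\Tbdle^{co}\isom(\R\times F)\dsum TF\dsum(\R\times F)$, the parallel metric $f^{\Tbdle}$ pairing the two line bundles and restricting to $f$ on the middle summand. Under $\Abdle^{co}\isom\so(\Tbdle^{co},f^{\Tbdle})$, the adjoint tractor with components $[s_i]_f=(\gamma_i,-\alpha_i,\mathbf{K}_i,\Bbbk_i)^t$ is represented in this splitting by the block endomorphism
\begin{align*}
s_i=\begin{pmatrix} -\alpha_i & \gamma_i & 0\\ \Bbbk_i & \mathbf{K}_i & -\gamma_i^{\sharp}\\ 0 & -\Bbbk_i^{\flat} & \alpha_i\end{pmatrix},
\end{align*}
in which $\gamma_i$ acts as a covector, $\Bbbk_i$ and $\gamma_i^{\sharp}$ as vectors, $\mathbf{K}_i\in\so(F,f)$, and the two scalars are forced to be $\mp\alpha_i$ by $f^{\Tbdle}$-skewness. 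The whole statement will then come from expanding $s_i^2$ and comparing with $-\mathrm{Id}_{\Tbdle}$ block by block.

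First I would record the products appearing in $s_i^2$: the pairings $\gamma_i(\Bbbk_i)$, $\gamma_i(\gamma_i^{\sharp})=\vert\gamma_i\vert^2$ and $\Bbbk_i^{\flat}(\Bbbk_i)=\vert\Bbbk_i\vert^2$; the images $\mathbf{K}_i\Bbbk_i$, $\mathbf{K}_i\gamma_i^{\sharp}$; the square $\mathbf{K}_i^2$; and the rank-one endomorphisms $\Bbbk_i\tens\gamma_i$ and $\gamma_i^{\sharp}\tens\Bbbk_i^{\flat}$. Imposing $s_i^2=-\mathrm{Id}_{\Tbdle}$ then reads off directly from most blocks: the two diagonal scalar entries each give $\gamma_i(\Bbbk_i)+\alpha_i^2=-1$, which is (ii); the two anti-diagonal corner entries give $\vert\gamma_i\vert^2=0$ and $\vert\Bbbk_i\vert^2=0$, i.e.\ $\Bbbk_i$ and $\gamma_i^{\sharp}$ are light-like; and the two off-diagonal vector entries give the eigenvector relations of (i), namely that $\Bbbk_i$ and $\gamma_i^{\sharp}$ are eigenvectors of $\mathbf{K}_i$ with eigenvalues $\pm\alpha_i$ (the precise sign being fixed by the tractor-frame conventions above).

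The only block needing more than a direct reading is the central $\so(F,f)$-entry, which yields
\begin{align*}
\mathbf{K}_i^2+\Bbbk_i\tens\gamma_i+\gamma_i^{\sharp}\tens\Bbbk_i^{\flat}=-\mathrm{Id}_{TF}.
\end{align*}
Here I would use (ii) to note $\gamma_i(\Bbbk_i)=-1-\alpha_i^2\neq0$, so that $\mathrm{span}(\Bbbk_i,\gamma_i^{\sharp})$ is a non-degenerate $2$-plane and $TF$ splits $f$-orthogonally as $\mathrm{span}(\Bbbk_i,\gamma_i^{\sharp})\dsum\Hbdle_{s_i,f}$. The eigenvector relations make the $2$-plane $\mathbf{K}_i$-invariant, hence $\Hbdle_{s_i,f}$ is $\mathbf{K}_i$-invariant by skewness; and for $v\in\Hbdle_{s_i,f}$ the two rank-one terms vanish since $\gamma_i(v)=\Bbbk_i^{\flat}(v)=0$, so the central identity restricts exactly to $\mathbf{K}_i^2=-\mathrm{Id}$ on $\Hbdle_{s_i,f}$, which is (iii). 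Conversely, given (i)--(iii) I would verify the full central identity back on all of $TF$ by evaluating separately on $\Bbbk_i$, on $\gamma_i^{\sharp}$, and on $\Hbdle_{s_i,f}$, each time using the light-like and eigenvalue relations together with (ii); this gives the equivalence in both directions. I expect the main (mild) obstacle to be purely bookkeeping: keeping the musical isomorphisms and the sign conventions in the matrix of $s_i$ straight, after which every condition falls out of the corresponding block of $s_i^2$.
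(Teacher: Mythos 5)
Your proof is correct and follows exactly the route the paper intends: the paper gives no details beyond stating that the lemma "is shown by squaring the matrix representation of $s_i$ and comparing the result with $-\mathrm{Id}$" (citing Leitner), which is precisely your block-by-block computation, including the orthogonal splitting $TF = \mathrm{span}(\Bbbk_i,\gamma_i^{\sharp}) \dsum \mathcal{H}_{s_i,f}$ used to read off (iii) from the middle block. Your remark that the two eigenvalues come out as $\pm\alpha_i$ with \emph{opposite} signs is in fact forced (skew-symmetry of $\mathbf{K}_i$ plus $f(\Bbbk_i,\gamma_i^{\sharp})=\gamma_i(\Bbbk_i)=-1-\alpha_i^2\neq 0$ make equal eigenvalues impossible unless $\alpha_i\equiv 0$), so your version is the correct reading of condition (i), whose literal wording in the lemma is loose on this sign.
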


In particular, by (ii) the conformal Killing field $\Bbbk_i$ is nowhere vanishing, from which it follows that conformal re-scalings exist in a neighborhood around any point making $\Bbbk_i$ to a Killing field. Denoting by $N_{co}$ the (either local or global) leaf space of the foliation determined by $\Bbbk_1$, and by $M_{co}$ the leaf space of the foliation determined by the span of $\Bbbk_1, \Bbbk_2, \Bbbk_3$, we will use this fact to identify the CR structure and pseudo-hermition forms on $N_{co}$ and the qc structure and local qc contact forms on $M_{co}$, respectively. Although the distribution $\mathcal{H}_{s_1,f}$ and almost complex structure $\mathbf{K}_1$ depend \textit{a priori} on the choice of $f$, we will see that the distribution $\mathcal{H}_{co} \subset TN_{co}$ determined by $\mathcal{H}_{co}(p(x)) := T_x p (H_{s_1,f}(x))$, for $x \in F$ and $p:F\rightarrow N_{co}$ the projection onto the leaf space, is invariant and that $\mathbf{K}_1$ induces a natural almost complex structure $J_{co}$ on it.\\

This is proved by direct calculation in \cite{L05}, but it also follows by considering the adjoint bundle $\Abdle^{co}$ and its natural sub-bundle induced by the reduction $\Gbdle^{cr}$ to $G^{cr}$, corresponding to the parallel section $s_1$. Namely, defining the associated bundle $\tilde{\Abdle^{cr}} := \Gbdle^{cr} \times_{G^{cr} \cap P^{co}} \g^{cr}$, we have $\tilde{\Abdle^{cr}} = p^*\Abdle^{cr}$, for $\Abdle^{cr}$ the adjoint bundle over $N_{co}$ associated to the parabolic geometry $(\Gbdle^{cr} \rightarrow N_{co}, j^*_{cr}\omega^{co})$ of CR type $(G^{cr},P^{cr})$. Indeed, the covering map for $p:F \rightarrow N_{co}$ is given by the natural bundle map $p^{\Abdle}: \tilde{\Abdle^{cr}} \rightarrow \Abdle^{cr}$ which sends the equivalence class $[(u,X)] \in \tilde{\Abdle^{cr}}$, for any $u \in \Gbdle^{cr}, X \in \g^{cr}$, to the equivalence class $[[(u,X)]] \in \Abdle^{cr}$, now considered up to equivalence under the action of $P^{cr} \supset G^{cr} \cap P^{co}$.\\

The CR structure on $N_{co}$ is induced from $(\Gbdle^{cr},j^*_{cr}\omega^{co})$ as follows: The CR distribution is given by $\mathcal{H}_{co} \isom (\Abdle^{cr})^{-1}/(\Abdle^{cr})^0$, and the almost complex structure $J_{co}$ on $\mathcal{H}_{co}$ is determined by the induced adjoint action of the section $s_1$ on $\mathcal{H}_{co}$. Now using this, it is not too hard to see that $\Hbdle_{co}$ is spanned pointwise by the image under $\Pi^{cr} \circ p^{\Abdle}$ of adjoint tractors $a \in \tilde{\Abdle^{cr}}$ such that $\Pi^{co}(a) \in \mathcal{H}_{s_1,f}$, and it follows that multiplication by $J_{co}$ in $\Hbdle_{co}$ corresponds to the projection of the action of $\mathbf{K}_1$ on the distribution $\mathcal{H}_{s_1,f}$. We also see that a pseudo-hermitian form $\theta_f$ for $(N_{co},\Hbdle_{co},J_{co})$, can be given by choosing a local conformal factor making $\Bbbk_1$ Killing, and letting $\theta_f$ be the uniquely determined 1-form on $N_{co}$ which pulls back under $p$ to the 1-form $f(\Bbbk_1,.)$.\\

This approach also allows an extension to determine the qc structure induced on $M_{co}$ by $(\Gbdle^{qc},j^*_{qc}\omega^{co})$, the Fefferman reduction to type $(G^{qc},P^{qc})$. The qc distribution $\Dbdle_{co} \subset TM_{co}$ is the image under projection of the distribution $\tilde{\Dbdle}_{co} := \cap_{i=1}^3 \Hbdle_{s_i,f}$. A local qc contact form, together with a local quaternionic basis of the bundle $\mathbb{Q}_{co}$, are determined by choosing a local conformal metric for which $\Bbbk_1$ is Killing, and a local section of the projection $N_{co} \rightarrow M_{co}$, and using this to transfer the three almost complex structures defined on $\tilde{\Dbdle}_{co}$ (respectively, on $Tp(\tilde{\Dbdle}_{co}) \subset TN_{co}$), to $\Dbdle_{co}$. Since $s_1,s_2,s_3$ satisfy the quaternionic commutator relations, the same follows for the complex structures so defined on $\Dbdle_{co}$, while the other properties of a qc manifold can also be easily checked.\\

A result of this analysis is the following lemma, which follows from proposition \ref{adjoint tractor trace}:

\begin{Lemma} \label{trace formulae} Suppose $Hol(F,[f]) \subseteq G^{cr}$. For the Fefferman reduction $(\Gbdle^{cr} \rightarrow N_{co},j_{cr}^*\omega^{co})$ of CR type given by proposition \ref{local reductions}, the ``complex trace'' of the curvature form $K^{j_{cr}^*\omega^{co}}$ vanishes: for $\{e_a\}$ a unitary local basis of $\mathcal{H}_{co}$ with respect to a pseudo-hermitian form $\theta_f$ for $(N_{co},\mathcal{H}_{co},J_{co})$, we have
\begin{align}
\sum_a \varepsilon_a K^{j_{cr}^*\omega^{co}}(J_{co}(e_a),e_a) = 0. \label{cr trace formula}
\end{align}
\no Similarly, for $Hol(F,[f]) \subseteq G^{qc}$ and $(\Gbdle^{qc} \rightarrow M_{co},j_{qc}^*\omega^{co})$ the Fefferman reduction to qc type given by proposition \ref{local reductions}, we have for any local choice of qc contact form and a quaternionic-unitary local basis $\{e_a\}$ of $\Dbdle_{co} \subset TM_{co}$ with respect to it, and $I \in \mathbb{Q}_{co}$:
\begin{align}
\sum_a K^{j_{qc}^*\omega^{co}}(I(e_a),e_a) = 0. \label{qc trace formula}
\end{align}
\end{Lemma}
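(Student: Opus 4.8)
The plan is to derive both identities from Proposition \ref{adjoint tractor trace}, which already supplies the vanishing of the \emph{full} metric trace $\sum_a \varepsilon_a K^{co}(\mathbf{K}_i(e_a),e_a)$ over $TF$ as soon as $\Bbbk_i$ is Killing. First I would fix, around an arbitrary point, a metric $f\in[f]$ for which $\Bbbk_1$ is Killing (possible by the discussion following Lemma \ref{Felipes Lemma}), so that $\alpha_1=0$ and $\mathbf{K}_1=\nabla\Bbbk_1$. Since the reductions $(\Gbdle^{cr},j_{cr}^*\omega^{co})$ and $(\Gbdle^{qc},j_{qc}^*\omega^{co})$ are restrictions of the conformal geometry, their curvatures are restrictions of $K^{co}$; and by the identifications preceding the lemma, $J_{co}$ (resp. the chosen $I\in\mathbb{Q}_{co}$) corresponds to the action of $\mathbf{K}_1$ on the lifted distribution $\mathcal{H}_{s_1,f}$ (resp. $\tilde{\Dbdle}_{co}=\cap_i\mathcal{H}_{s_i,f}$). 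Thus the left-hand sides of (\ref{cr trace formula}) and (\ref{qc trace formula}) are exactly partial metric traces of $(X,Y)\mapsto K^{co}(\mathbf{K}_1 X,Y)$ over these distributions, and the task is to show that the complementary part of the full trace vanishes.

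For the CR identity this is immediate: by Lemma \ref{Felipes Lemma} one has the orthogonal splitting $TF=\mathcal{H}_{s_1,f}\dsum\langle\Bbbk_1,\gamma_1^\sharp\rangle$, and $\mathbf{K}_1$ annihilates both $\Bbbk_1$ and $\gamma_1^\sharp$ (they are its $\alpha_1=0$ eigenvectors), so the trace over the $2$-dimensional complement is zero and the full trace equals the trace over $\mathcal{H}_{s_1,f}$. Using $\mathbf{K}_1^2=-\mathrm{Id}$ on $\mathcal{H}_{s_1,f}$ together with the skew-symmetry of the two-form $K^{co}$, this equals $\sum_a\varepsilon_a K^{co}(\mathbf{K}_1 e_a,e_a)$ up to a nonzero factor; since the full trace vanishes by Proposition \ref{adjoint tractor trace}, (\ref{cr trace formula}) follows.

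For the qc identity the same scheme applies with the orthogonal splitting $TF=\tilde{\Dbdle}_{co}\dsum V$, where $V:=\langle\Bbbk_1,\Bbbk_2,\Bbbk_3,\gamma_1^\sharp,\gamma_2^\sharp,\gamma_3^\sharp\rangle$, so I must show the metric trace of $K^{co}(\mathbf{K}_1\cdot,\cdot)$ over the $6$-dimensional $V$ vanishes. Here I would use three structural facts, all read off from the quaternionic relations satisfied by $s_1,s_2,s_3$ on $\Tbdle^{co}$ via their $\so(\Tbdle^{co},f^{\Tbdle})$-matrix form: (a) from the anticommutators $s_is_j+s_js_i=0$ ($i\neq j$) one gets $f(\Bbbk_i,\Bbbk_j)=f(\gamma_i^\sharp,\gamma_j^\sharp)=0$, so the metric on $V$ is block-anti-diagonal and its inverse pairs only $\Bbbk$'s with $\gamma^\sharp$'s; (b) from $\Bbbk_j\lrcorner K^{co}=0$ (Proposition \ref{parallel adjoint Tractors}, equation (\ref{star})), every term with a $\Bbbk_j$ in the second slot vanishes; (c) comparing the bottom ($TF$-valued) slot of $s_1s_i$ with that of $\pm s_k$ shows $\mathbf{K}_1\Bbbk_i\in\langle\Bbbk_1,\Bbbk_2,\Bbbk_3\rangle$, so the remaining terms $K^{co}(\mathbf{K}_1\Bbbk_i,\gamma_j^\sharp)$ also vanish by (b). Combining, the trace over $V$ is zero, whence the trace over $\tilde{\Dbdle}_{co}$ equals the full trace, which is zero by Proposition \ref{adjoint tractor trace}; this gives (\ref{qc trace formula}), with $\varepsilon_a\equiv 1$ since $\Dbdle_{co}$ is positive definite.

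The main obstacle is the qc complement computation, i.e. step (c) together with the orthogonality relations in (a): these require extracting the precise action of $\mathbf{K}_1$ on the $\Bbbk_i$ and the pairings among the $\Bbbk_i,\gamma_i^\sharp$ from the quaternionic identities, which is where the explicit skew-symmetric tractor-endomorphism representation of the $s_i$ must be used. One should also verify that $V$ is non-degenerate, so that $TF=\tilde{\Dbdle}_{co}\dsum V$ is a genuine orthogonal decomposition and the trace splits as claimed; this follows from the pairing $f(\Bbbk_i,\gamma_i^\sharp)=-1$ given by Lemma \ref{Felipes Lemma}(ii).
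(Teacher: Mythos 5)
Your proposal is correct and is essentially the paper's own argument: the paper gives no separate proof of this lemma, asserting only that it ``follows from proposition \ref{adjoint tractor trace}'' together with the identifications developed just before it (the curvature relation (\ref{Fefferman curvature relation}) and the correspondence of $J_{co}$, resp.\ $I\in\mathbb{Q}_{co}$, with the action of $\mathbf{K}_1$ on $\mathcal{H}_{s_1,f}$, resp.\ on $\tilde{\Dbdle}_{co}$), and your splitting of the full trace over $TF=\mathcal{H}_{s_1,f}\dsum V$, resp.\ $TF=\tilde{\Dbdle}_{co}\dsum V$, together with the vanishing of the complementary trace, supplies exactly the details the paper leaves implicit. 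One small point to tighten in the qc case: nondegeneracy of the six-dimensional $V$ does not follow from the diagonal pairings $f(\Bbbk_i,\gamma_i^{\sharp})\neq 0$ alone but from invertibility of the full matrix $C_{ij}:=f(\Bbbk_i,\gamma_j^{\sharp})$, which holds since $\chi_{i,j}=0$ and Lemma \ref{Felipes Lemma}(ii) give $C+C^t=-2(\mathrm{Id}+\alpha\alpha^t)$ with $\alpha=(\alpha_1,\alpha_2,\alpha_3)^t$, a definite symmetric part (and then $\det$ of the Gram matrix of $V$ is $\pm\det(C)^2\neq 0$ whatever the pairings $f(\gamma_i^{\sharp},\gamma_j^{\sharp})$ are); even if those pairings fail to vanish, so that the duals of the $\Bbbk_i$ acquire $\Bbbk$-components, all such terms still die by (\ref{star}), so your conclusion stands.
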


\subsection{Quaternionic Sparling's criteria (A.ii $\Leftrightarrow$ A.iii)}

In section 3 of \cite{CG06b}, \v{C}ap and Gover gave a conformally invariant generalization of Sparling's criteria for a pseudo-Riemannian manifold to be locally conformally isomorphic to the Fefferman space of a CR manifold. In view of propositions \ref{local reductions}, \ref{parallel adjoint Tractors} and \ref{global reductions}, the first step is to characterize when, for a (light-like) conformal Killing field $\Bbbk$ satisfying the curvature condition $\Bbbk \: K^{co} = 0$ (which is equivalent to $\Bbbk \: \W = \Bbbk \: \Cot = 0$), its induced parallel adjoint tractor $s \in \Gamma(\Abdle^{co})$ determines a complex structure (and hence a conformal holonomy reduction to $G^{cr}$). \v{C}ap and Gover answer this in theorem 3 and corollary 3 of \cite{CG06b} by means of a very nice application of the machinery of BGG sequences for parabolic geometries (cf. \cite{CD01} and \cite{CSS01}). We apply this to the ``quaternionic'' setting in the following (which also shows the equivalence of (ii) and (iii) in theorem A):

\begin{Proposition} \label{quaternionic Sparling} Let $(F,f)$ be a pseudo-Riemannian manifold of signature $(4n+3,3)$, endowed with two linearly independent, light-like, mutually orthogonal conformal Killing fields $\Bbbk_1$ and $\Bbbk_2$ (with conformal factors $\lambda_i$), satisfying $\Bbbk_i \: \Cot = \Bbbk_i \: \W = 0$. Denoting $2\alpha_i = \lambda_i \in C^{\infty}(F)$, $2\mathbf{K}_i = d^{\sharp}\Bbbk_i \in \Gamma(\so(F,f))$ and $\gamma_i = \mathsf{P}(\Bbbk_i) - d\alpha_i \in \Omega^1(F)$, the associated parallel adjoint tractors satisfy $[s_i]_f = (\gamma_i,-\alpha_i,\mathbf{K}_i,\Bbbk_i)^t \in \Gamma(\Abdle^{co})$. Then the scalar quantity
\begin{align}
\chi_{1,2} :=  \mathsf{P}(\Bbbk_1,\Bbbk_2) + \alpha_1 \alpha_2 - \frac{1}{2}\Bbbk_1(\alpha_2) - \frac{1}{2}\Bbbk_2(\alpha_1) \label{chi invariant}
\end{align}
\no is a conformally invariant constant, and the (parallel) section $s_1 \circ s_2 \in \Gamma(\mathrm{End}(\Tbdle^{co}))$ splits into the sum of a parallel adjoint tractor and the constant multiple of the identity $\chi_{1,2} \mathrm{Id}_{\Tbdle}$. Also, the following formulae define a conformal Killing field $\Bbbk_3$ with conformal factor $\lambda_3$ and satisfying $\Bbbk_3 \: \Cot = \Bbbk_3 \: \W = 0$:
\begin{align}
\Bbbk_3 := \mathbf{K}_1(\Bbbk_2) - \alpha_2 \Bbbk_1 \, , \, \lambda_3 = 2\alpha_3 = \Bbbk_2(\alpha_1) - \Bbbk_1(\alpha_2), \label{kthree}
\end{align}
\no Furthermore, define for $i=1,2,3$ the scalar functions $\beta_i$:
\begin{align}
\beta_i := \mathsf{P}(\Bbbk_i,\Bbbk_i) + \alpha_i^2 - \Bbbk_i(\alpha_i). \label{beta factor}
\end{align}
\no Then $\beta_1$ and $\beta_2$ are conformally invariant constants, and $\beta_3$ is as well whenever $\chi_{1,2} = 0$. The adjoint tractors $s_1, s_2, s_3$ corresponding to $\Bbbk_1, \Bbbk_2, \Bbbk_3$, respectively, determine a conformal holonomy reduction $Hol(F,[f]) \subseteq G^{qc}$ if and only if $\chi_{1,2} = 0$, $\beta_i < 0$, and $\beta_1\beta_2 = -\beta_3$.
\end{Proposition}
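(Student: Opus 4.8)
The plan is to reduce everything to statements about the parallel adjoint tractors $s_1,s_2,s_3$ and the induced operators on the standard tractor bundle $\Tbdle^{co}$, exploiting the correspondence (Proposition \ref{parallel adjoint Tractors}) between such tractors and conformal Killing fields satisfying $\Bbbk_i \: K^{co}=0$, together with the explicit connection formula (\ref{adjoint tractor connection}) and the algebraic characterization in Lemma \ref{Felipes Lemma}. First I would verify that $s_1\circ s_2 \in \Gamma(\mathrm{End}(\Tbdle^{co}))$ is $\nabla^{\Tbdle}$-parallel (immediate, since each $s_i$ is parallel) and then compute its symmetric part in the $f$-splitting. The key observation is that the endomorphism $s_1\circ s_2$, being parallel, decomposes into its trace-part and its $\so(\Tbdle^{co},f^{\Tbdle})$-part; the trace-part is forced to be a constant multiple of $\mathrm{Id}_{\Tbdle}$, and a direct matrix computation using the block form of $s_i$ (from Lemma \ref{Felipes Lemma}) identifies that constant with $\chi_{1,2}$ as given in (\ref{chi invariant}). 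Since a parallel section of a tractor bundle that is pointwise a multiple of the identity must have constant coefficient, this simultaneously proves that $\chi_{1,2}$ is a conformally invariant constant and that $s_1\circ s_2 - \chi_{1,2}\mathrm{Id}_{\Tbdle}$ is again a (skew, hence adjoint) parallel tractor.

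Next I would treat $\Bbbk_3$ and $\beta_i$. The vector field $\Bbbk_3$ defined in (\ref{kthree}) should be recognized as $\Pi^{co}$ applied to the parallel adjoint tractor $s_3 := s_1\circ s_2 - \chi_{1,2}\mathrm{Id}_{\Tbdle}$ (up to sign conventions and the trace correction); computing the bottom component of $[s_1 s_2]_f$ via (\ref{adjoint tractor connection})-style block multiplication and Lemma \ref{Felipes Lemma}(i) should reproduce $\mathbf{K}_1(\Bbbk_2)-\alpha_2\Bbbk_1$ and the scalar component should give $\alpha_3$ as in (\ref{kthree}). Because $s_3$ is a parallel adjoint tractor, Proposition \ref{parallel adjoint Tractors} immediately yields that $\Bbbk_3$ is a conformal Killing field with $\Bbbk_3 \: K^{co}=0$, equivalently $\Bbbk_3 \: \W = \Bbbk_3 \: \Cot = 0$. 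For the $\beta_i$, I would observe that $\beta_i$ is precisely the relevant entry of the product $s_i\circ s_i$, so by Lemma \ref{Felipes Lemma}(ii) the condition $s_i^2 = -\mathrm{Id}_{\Tbdle}$ (the defining condition for a complex structure on $\Tbdle^{co}$, i.e. for the holonomy reduction) is governed by the scalar $\beta_i$ together with the eigenvalue/light-likeness data; applying the constancy argument again shows $\beta_1,\beta_2$ (and $\beta_3$ when $\chi_{1,2}=0$) are conformally invariant constants.

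For the final equivalence, the idea is that a reduction $Hol(F,[f])\subseteq G^{qc}=PSp(\H^{n+1,1})$ is equivalent to the existence of three $\nabla^{\Tbdle}$-parallel skew endomorphisms satisfying the quaternionic commutator relations and squaring to a fixed negative multiple of the identity, after a common rescaling. I would start from the three parallel adjoint tractors $s_1,s_2,s_3$ and compute, using the block multiplication, that $s_i^2 = \beta_i\,\mathrm{Id}_{\Tbdle}$ plus (when $\chi_{1,2}\neq 0$) an off-diagonal obstruction controlled exactly by $\chi_{1,2}$; thus $\chi_{1,2}=0$ is the condition making the $s_i$ pairwise anticommute with products closing up among themselves, and $\beta_i<0$ with the normalization $\beta_1\beta_2=-\beta_3$ is what allows rescaling $J_i := s_i/\sqrt{-\beta_i}$ to obtain $J_i^2=-\mathrm{Id}_{\Tbdle}$ and $J_1J_2=J_3$. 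Conversely, reduction to $G^{qc}$ produces such a quaternionic triple whose projections are $\Bbbk_1,\Bbbk_2,\Bbbk_3$, forcing these algebraic relations. I expect the main obstacle to be the bookkeeping in the block-matrix multiplication $s_i\circ s_j$: one must carefully track how the four components $(\gamma,-\alpha,\mathbf{K},\Bbbk)$ compose in $\mathrm{End}(\Tbdle^{co})$ and verify that the quaternionic relations $I_1I_2=-I_2I_1=I_3$ translate into exactly the stated sign and normalization conditions $\chi_{1,2}=0$, $\beta_i<0$, $\beta_1\beta_2=-\beta_3$ — in particular getting the signs and the factor conventions (e.g. $2\alpha_i=\lambda_i$) consistent throughout, and confirming that the negativity $\beta_i<0$ rather than $\beta_i>0$ is what corresponds to the signature of the quaternionic form $Q_{n+1,1}$.
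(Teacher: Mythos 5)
Your proposal follows the same basic route as the paper --- decompose the parallel endomorphism $s_1\circ s_2$ of $\Tbdle^{co}$, identify its identity-component with $\chi_{1,2}$ and its skew component with $s_3$, then convert the quaternionic relations into the scalar conditions $\chi_{1,2}=0$, $\beta_i<0$, $\beta_1\beta_2=-\beta_3$ --- but it has a genuine gap at the first and most important step. You assert that the parallel section $s_1\circ s_2$ ``decomposes into its trace-part and its $\so(\Tbdle^{co},f^{\Tbdle})$-part''. For a product of two skew endomorphisms this is false in general: the invariant decomposition is
\begin{align*}
\mathrm{End}(\Tbdle^{co}) = \so(\Tbdle^{co},f^{\Tbdle}) \dsum \mathrm{S}_0^2(\Tbdle^{co},f^{\Tbdle}) \dsum \R\cdot\mathrm{Id}_{\Tbdle},
\end{align*}
and the symmetric part $\tfrac{1}{2}(s_1s_2+s_2s_1)$ of your product has, a priori, a non-trivial trace-free piece. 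Killing that piece is exactly where the hypothesis $f(\Bbbk_1,\Bbbk_2)=0$ enters, and it is not elementary: one needs the argument of theorem 3 of \cite{CG06b}, namely that a \emph{parallel} section of $\mathrm{S}_0^2(\Tbdle^{co},f^{\Tbdle})$ is determined (via the BGG splitting operator) by its projection to the quotient by the highest non-trivial filtration component, and that this projection is precisely the scalar $f(\Bbbk_1,\Bbbk_2)$. Your outline never uses the orthogonality hypothesis at all, which is a symptom of the omission; without this step the decomposition $s_1\circ s_2 = s_3 + \chi_{1,2}\,\mathrm{Id}_{\Tbdle}$, and hence everything downstream, is unjustified.

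The same issue recurs twice more. For $s_i^2$ (which is symmetric, so has no skew part) the vanishing of the $\mathrm{S}_0^2$-component is what yields $s_i^2=\beta_i\,\mathrm{Id}_{\Tbdle}$, and it requires light-likeness $f(\Bbbk_i,\Bbbk_i)=0$ together with the same parallel-section argument; Lemma \ref{Felipes Lemma} alone does not give this, since it presupposes $s_i^2=-\mathrm{Id}_{\Tbdle}$ rather than deriving that $s_i^2$ is a multiple of the identity. And to run this argument for $s_3$ you must first prove that $\Bbbk_3$ is light-like, which is not part of its definition (\ref{kthree}). The paper establishes this (assuming $\chi_{1,2}=0$) by introducing the auxiliary field $\Bbbk_{1,2} := \Bbbk_2 - \frac{\gamma_2(\Bbbk_1)+2\alpha_1\alpha_2}{1+\alpha_1^2}\Bbbk_1$, checking that it lies in the distribution $\Hbdle_{s_1,f}$ of Lemma \ref{Felipes Lemma} so that $\mathbf{K}_1$ acts on it as an almost complex structure, and deducing $f(\Bbbk_3,\Bbbk_3)=0$; your proposal omits this entirely, leaving the constancy of $\beta_3$ and the relation $\beta_1\beta_2=-\beta_3$ (which then follows from $s_3^2=s_1s_2s_1s_2=-s_1^2s_2^2$ once anticommutativity is known) unsupported. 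The remainder of your outline --- $s_2\circ s_1=-s_3$ from skewness of $s_1\circ s_2$ when $\chi_{1,2}=0$, the identification of $\chi_{1,2}$ and $\Bbbk_3$ by block multiplication, and the appeal to Proposition \ref{parallel adjoint Tractors} for the Killing and curvature properties of $\Bbbk_3$ --- matches the paper and is sound once these points are supplied.
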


\begin{proof} The tractor endomorphism field $s_1 \circ s_2$ is parallel, since $s_1$ and $s_2$ are. From the $P^{co}$-invariant decomposition $$\mathrm{End}(\Tbdle^{co}) = \so(\Tbdle^{co},f^{\Tbdle}) \dsum \mathrm{S}_0^2(\Tbdle^{co},f^{\Tbdle}) \dsum \R \cdot \mathrm{Id}_{\Tbdle}$$ of the bundle of tractor endomorphisms in adjoint tractors, trace-free symmetric endomorphisms, and pure-trace symmetric endomorphisms, $s_1 \circ s_2$ must decompose into the sum of parallel sections of each of these sub-bundles. The component of the section in $\mathrm{S}_0^2(\Tbdle^{co},f^{\Tbdle})$ is seen as in the proof of theorem 3 in \cite{CG06b} to vanish, since the projection onto the quotient by the highest non-trivial filtration component is determined by the scalar $f(\Bbbk_1,\Bbbk_2)$, and this vanishes.\\

\no Thus $s_1 \circ s_2$ is the sum of a parallel adjoint tractor and a parallel section of $\R \cdot \mathrm{Id}_{\Tbdle}$, which must be a constant multiple of the identity. Now from elementary computations, one sees that if a matrix splits into the sum of an adjoint matrix and a multiple of the identity, then the multiple of the identity is given by one-half the sum of its upper-left and lower-right entries. In this case, that is computed by matrix multiplication to be the scalar $\chi_{1,2}$ given in (\ref{chi invariant}), from which it follows that this is a conformally invariant constant. Simple matrix multiplication also shows that the component of $s_1 \circ s_2$ corresponding to the adjoint tractor, projects onto the vector field $\Bbbk_3 = \mathbf{K}_1(\Bbbk_2) - \alpha_2 \Bbbk_1$, which by proposition \ref{parallel adjoint Tractors} must be a conformal Killing field satisfying $\Bbbk_3 \: \Cot = \Bbbk_3 \: \W = 0$, and with conformal factor determined by $\alpha_3$, the lower-right entry of the matrix representation of the adjoint tractor. Again, matrix multiplication and elementary computations show that this entry is given by $\alpha_3 = \chi_{1,2} - \alpha_1\alpha_2 - \mathsf{P}(\Bbbk_1,\Bbbk_2) + \Bbbk_2(\alpha_1)$, which gives the formula in (\ref{kthree}).\\

\no Assume now that $\chi_{1,2} = 0$. Defining the vector field $$\Bbbk_{1,2} := \Bbbk_2 - \frac{\gamma_2(\Bbbk_1)+2\alpha_1\alpha_2}{1+\alpha_1^2}\Bbbk_1,$$ we see since $\Bbbk_1$ and $\Bbbk_2$ are orthogonal, that also $f(\Bbbk_1,\Bbbk_{1,2}) = 0$, and by property (ii) of lemma \ref{Felipes Lemma}, $\gamma_1(\Bbbk_{1,2}) = \gamma_1(\Bbbk_2) + \gamma_2(\Bbbk_1) + 2\alpha_1\alpha_2 = 2\chi_{1,2} = 0$. Thus $\Bbbk_{1,2} \in \Hbdle_{s_1,f}$ and $\mathbf{K}_1$ acts by almost complex multiplication on $\Bbbk_{1,2}$. Using this, and the fact that $\mathbf{K}_1(\Bbbk_1) = \alpha_1\Bbbk_1$, it follows from $f(\Bbbk_1,\Bbbk_2) = 0$ that $f(\mathbf{K}_1(\Bbbk_2),\mathbf{K}_1(\Bbbk_2)) = f(\mathbf{K}_1(\Bbbk_2),\Bbbk_1) = 0$, which shows that $\Bbbk_3$ is light-like. Now we can apply theorem 3 of \cite{CG06b} to $\Bbbk_3$ (and to $\Bbbk_1, \Bbbk_2$, in any case). The $\beta_i$ correspond to the scalar functions given there, showing that $\beta_i$ are also conformally invariant constants, and the $s_i$ define complex structures on $\Tbdle^{co}$ if and only they are negative. And rescaling, e.g. $\Bbbk_1$ and $\Bbbk_2$ so that $\beta_1 = \beta_2 = -1$, one sees directly that the scalar $\beta_3$ rescales in the way claimed. To see that the three adjoint tractors satisfy the quaternionic commutator relations, note that by the fact that $s_1 \circ s_2$ is an adjoint tractor (i.e. by $\chi_{1,2} = 0$), it automatically follows that $s_2 \circ s_1$ is also, and that the conformal Killing field corresponding to $s_2 \circ s_1$ is $-\Bbbk_3$. Hence $s_2 \circ s_1 = -s_3$, from which the quaternionic relations follow.
\end{proof}

\section{Normality of qc Fefferman spaces}

In this section, we prove the equivalence of conditions (i) and (ii) of theorem A (and the corresponding local statements). In view of propositions \ref{local reductions} and \ref{global reductions} (and the fundamental theorem \ref{fund thm} guaranteeing uniqueness up to isomorphism of the regular, normal parabolic geometries in each case), it suffices to prove the following:

\begin{Theorem} \label{normality}
Given a qc manifold $(M,\Dbdle,\mathbb{Q},[g])$, the Cartan connections $\widetilde{\omega^{qc}}$ and $\overline{\omega^{qc}}$ of types $(G^{cr},P^{cr})$ and $(G^{co},P^{co})$, respectively, induced via the Fefferman construction by the canonical parabolic geometry of $M$, are torsion-free and normal. Given a conformal manifold $(F,[f])$ with $Hol(F,[f]) \subseteq G^{cr}$ and $Hol(F,[f]) \subseteq G^{qc}$, respectively, then the local Fefferman reductions of the canonical parabolic geometry of $F$ -- by $(\Gbdle^{cr} \rightarrow N_{co},j_{cr}^*\omega^{co})$ and $(\Gbdle^{qc} \rightarrow M_{co},j_{qc}^*\omega^{co})$, respectively -- are torsion-free and normal.
\end{Theorem}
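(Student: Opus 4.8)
The plan is to reduce the entire statement to a single algebraic computation about the codifferential $\codiff$ acting on the curvature of the conformal (resp. CR) Fefferman geometry, exploiting the curvature relation (\ref{Fefferman curvature relation}) from the abstract Fefferman construction. Since $\omega^{qc}$ is torsion-free (as established in Example \ref{qc parabolics}), its curvature function $\k^{qc}$ takes values in $\alt^2(\g^{qc}_-)^* \tens \p^{qc}$. By (\ref{Fefferman curvature relation}), the induced curvatures $\widetilde{\k^{qc}}$ and $\overline{\k^{qc}}$ are obtained by pulling back $\k^{qc}$ along the projection $\pi_{\p}$, so they are completely determined on the reduction sub-bundles $\Gbdle^{qc} \subset \widetilde{\Gbdle^{qc}} \subset \overline{\Gbdle^{qc}}$, and in particular they again take values in the parabolic subalgebra ($\p^{co}$, resp. $\p^{cr}$). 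This immediately gives torsion-freeness of the induced connections, and by Remark \ref{torsion free harmonic curvature} it suffices thereafter to check normality.

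First I would set up the relevant Lie-theoretic data at the Lie algebra level, using the matrix form (\ref{matrix form}) and the inclusions $\g^{qc} \subset \g^{cr} \subset \g^{co}$ (for which the excerpt promises details in Appendix A). The key point is to compare the codifferentials $\codiff$ for the three parabolic types. Because the Killing forms are compatible under the inclusions (up to scale) and the gradings are nested, I expect that the dual bases $\{e_\a\},\{e^\b\}$ for $\g^{co}_-$ decompose into a part coming from $\g^{qc}_-$ and a complementary part spanning the ``extra'' directions (the fiber directions of $F_{qc} \to M$, i.e. the $\so(3)$- or $\sp(1)$-part). The goal is to show that when $\codiff^{co}$ (resp. $\codiff^{cr}$) is applied to the pulled-back curvature $\overline{\k^{qc}}$, the sum in formula (\ref{codiff formula}) restricted to the $\g^{qc}_-$-directions reproduces $\codiff^{qc}(\k^{qc}) = 0$, while the contributions from the extra directions vanish separately. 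The vanishing of these extra contributions is exactly where the trace formulae of Lemma \ref{trace formulae} — the ``complex trace'' (\ref{cr trace formula}) and ``quaternionic trace'' (\ref{qc trace formula}) — should enter, since those identities say precisely that certain sums of curvature terms over the complex/quaternionic structure vanish.

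The main obstacle I anticipate is controlling the second term $-\frac{1}{2}(\codiff\varphi)_2(X)$ in (\ref{codiff formula}), the one involving $\varphi([X,e^\a]_-,e_\a)$. Unlike the first term, which is a genuine ``trace'' and should be directly addressed by Lemma \ref{trace formulae}, this term mixes the grading components via the bracket $[X,e^\a]_-$ and does not obviously reduce to a trace over the complex structure. I expect the resolution to require using the horizontality and $P$-equivariance of the curvature together with the fact (from the proof of Proposition \ref{local reductions}) that $\p^{cr} = \ll s_1(u),\p^{co}\cap\g^{cr}\gg$ and $\p^{qc} = \ll s_1(u),s_2(u),s_3(u),\p^{co}\cap\g^{qc}\gg$; the adjoint tractors $s_i$ play the role of the ``extra'' generators, and $\overline{\k^{qc}}(s_i,\cdot) = 0$ by Proposition \ref{parallel adjoint Tractors} should kill the mixed terms. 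A clean way to organize this is to verify directly that $\codiff^{co}$ and the pullback $\pi_\p^*$ are compatible in the sense that $\codiff^{co} \circ \pi_\p^* = \pi_\p^* \circ \codiff^{qc}$ on curvature-type elements satisfying the insertion condition $\Bbbk_i \: K = 0$, after which normality of $\overline{\omega^{qc}}$ follows formally from normality of $\omega^{qc}$.

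Finally, for the converse direction — the local reductions $(\Gbdle^{cr} \to N_{co}, j_{cr}^*\omega^{co})$ and $(\Gbdle^{qc} \to M_{co}, j_{qc}^*\omega^{co})$ — I would argue that the same algebraic identity applies in reverse. Here $\omega^{co}$ is normal by assumption (it is the canonical conformal Cartan connection), so $\codiff^{co}(\k^{co}) = 0$; restricting to the reduction sub-bundle and using (\ref{Fefferman curvature relation}) identifies $j_{qc}^*\k^{co}$ with the curvature of the reduced geometry, and the compatibility $\codiff^{co}\circ\pi_\p^* = \pi_\p^*\circ\codiff^{qc}$ together with injectivity of $\pi_\p^*$ on the relevant cochain space forces $\codiff^{qc}(j_{qc}^*\k^{co}) = 0$. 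Torsion-freeness again follows from the values of the curvature lying in $\p^{qc}$, which is guaranteed by Proposition \ref{harmonic curvature} and Remark \ref{torsion free harmonic curvature} once one checks the harmonic curvature is $\p^{qc}$-valued; that last check reduces to the cohomology computation already recorded in Example \ref{qc parabolics}.
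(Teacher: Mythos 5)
Your overall skeleton (compare codifferentials across the inclusions via Killing-form-compatible dual bases, starting from the curvature relation (\ref{Fefferman curvature relation})) does match the paper's strategy, but the proposal has genuine gaps. The first is the claim that torsion-freeness of $\widetilde{\omega^{qc}}$ and $\overline{\omega^{qc}}$ follows ``immediately'' from torsion-freeness of $\omega^{qc}$. It does not: torsion-freeness of $\omega^{qc}$ only gives $\mathrm{im}(\k^{qc}) \subseteq \p^{qc}$, and $\p^{qc}$ is \emph{not} contained in $\p^{cr}$ or $\p^{co}$ under the inclusions --- the $\sp(1)$-summand of $\g^{qc}_0$ acquires nonzero negative grading components (these are precisely the fiber directions of $F_{qc} \rightarrow M$). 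What is needed is the sharper statement $\mathrm{im}(\k^{qc}) \subseteq \p^{co} \cap \g^{qc}$, which the paper proves as claim (A) by applying proposition \ref{dell of lowest component} to the homogeneity-$2$ component (the identity $0 = (\diff (\k^{qc})^{(2)})(Z,X,Y) = [Z,(\k^{qc})^{(2)}(X,Y)]$ for all $Z \in \g^{qc}_{-2}$ forces values in $\p^{co} \cap \g^{qc}_0$), and then extends to the full curvature via proposition \ref{harmonic curvature}. Without this step you obtain neither torsion-freeness of the induced geometries nor the hypothesis $\tilde{\k}(\varphi_-(\p),\tilde{\g}_-) = \{0\}$ on which any codifferential comparison rests.

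The second gap is that the hoped-for intertwining $\codiff^{co} \circ \pi_{\p}^* = \pi_{\p}^* \circ \codiff^{qc}$ does not hold, and this is exactly where the hard work lies. What is true (lemmas \ref{del1 identity} and \ref{del2 identity}) is an identity only up to projection onto $\varphi(\g)$, namely $c\,\varphi \circ (\codiff_{\p} \k)_1 = \mathrm{pr}_{\varphi(\g)} \circ (\codiff_{\tilde{\p}}\tilde{\k})_1 \circ \varphi_-$; so normality downstairs only shows that $(\codiff_{\p^{cr}}\k^{qc})_1$ takes values in the orthocomplement of $\varphi(\g^{qc})$, and killing that component is a separate substantive argument (the quaternionic linear algebra showing that the element $\psi \in [\p^{cr},\p^{cr}_+]$, anti-commuting with $j$ and $k$, must be hermitian-symmetric and hence zero, together with the final argument of theorem 2.5 of \cite{CG06a} at the conformal level). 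Applying these lemmas moreover requires that $(\codiff_{\p^{qc}}\k^{qc})_1$ and $(\codiff_{\p^{qc}}\k^{qc})_2$ vanish \emph{separately}, not merely in sum, which the paper derives from Schur's lemma and the multiplicity-one statement in Kostant's version of BBW. Finally, lemma \ref{trace formulae} cannot carry the forward direction as you propose: its hypotheses and its proof (via propositions \ref{parallel adjoint Tractors} and \ref{adjoint tractor trace}) presuppose a conformal manifold whose \emph{normal} Cartan connection has reduced holonomy, which in the forward direction is precisely what is being proven, so the appeal is circular. The paper invokes the trace formulae only in the converse direction, claim (C), to kill the terms $(\codiff_{\p^{qc}} \k^{co})_2(i) = \sum_a \k^{co}(iX_a,X_a)$; and there the vanishing of $(\codiff_{\p^{qc}}\k^{co})_1$ comes from lemma \ref{del1 identity} using that $\g^{co}$ is $\vert 1 \vert$-graded, not from any injectivity of $\pi_{\p}^*$.
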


Since torsion-freeness and normality are conditions on the curvatures of the Cartan connections, and in view of the identity (\ref{Fefferman curvature relation}) relating the curvature functions ``upstairs'' and ``downstairs'', the strategy is to use algebraic information and properties of the harmonic curvature to prove the theorem, which involves purely local identities.

\subsection{Algebraic identites}

First we establish, in an abstract algebraic setting, two identities relating the codifferentials associated with graded semi-simple Lie algebras under inclusion. These are basic for the proof of theorem \ref{normality} in the next sub-section. Separating the proof of these identities and establishing them in a general algebraic setting has the benefit of making that proof more transparent, and also emphasizing the general features of the inclusions in question which lead to the preservation of normality under the Fefferman construction.\\

Throughout this sub-section, $\varphi: \g \hookrightarrow \tilde{\g}$ will be an inclusion of semi-simple Lie algebras, $\p \subset \g$ and $\tilde{\p} \subset \tilde{\g}$ are parabolic sub-algebras, and we take as fixed associated $\vert k \vert$-, and $\vert m \vert$-gradings of $\g$ and $\tilde{\g}$, respectively. Denote with indices in an obvious way the splitting induced on the inclusion $\varphi$: for any $X \in \g$ we have $\varphi(X) = \varphi_{-m}(X) + \ldots + \varphi_m(X) = \varphi_-(X) + \varphi_0(X) + \varphi_+(X)$.\\

We will further assume that the inclusion satisfies the following properties. First, infinitesimally the conditions required for a Fefferman construction evidently correspond to:
\begin{align}
\tilde{\g} = \varphi(\g) + \tilde{\p} \,\, \mathrm{and} \,\, \varphi(\p) \supset \varphi(\g) \cap \tilde{\p}. \label{inf Fefferman}
\end{align}
\no Beyond this, there are some conditions which clearly hold in our cases and guarantee some basic ``good behavior'' of the Fefferman construction. Infinitesimally, these are:
\begin{align}
\varphi(\p_+) \subset \tilde{\p} \,\, \mathrm{and} \,\, \varphi(\g_0) \cap \tilde{\p} \subset \tilde{\g}_0. \label{inf Good Fefferman}
\end{align}
A final, natural condition to impose, is that the Killing forms of $\g$ and $\tilde{\g}$ be compatible. Denoting by $B$ the Killing form of $\g$ and by $\tilde{B}$ the Killing form of $\tilde{\g}$, we assume that $B = c \tilde{B} \circ \varphi$ for some (non-zero) constant $c \in \R$. Again, this is clearly the case for the inclusions we're dealing with. Finally, we recall a standard fact about graded semi-simple Lie algebras (cf. proposition 2.2 of \cite{CS99}), which will be used in the computations which follow. The Killing form and grading components satisfy: $B(\g_i,\g_j) = 0$ unless $i+j=0$; and $B$ induces isomorphisms $\g_{-i} \isom (\g_i)^*$.

\begin{Lemma} \label{del1 identity} Let $\varphi: \g \hookrightarrow \tilde{\g}$ be an inclusion of semi-simple Lie algebras with $\p \subset \g$ and $\tilde{\p} \subset \tilde{\g}$ parabolic subalgebras satisfying (\ref{inf Fefferman}) and (\ref{inf Good Fefferman}), and suppose that their Killing forms satisfy $B = c \tilde{B} \circ \varphi$ for some constant $c$. Let $\tilde{\k} \in C^2(\tilde{\g}_-,\tilde{\g})$ be given such that $\tilde{\k}(\varphi_-(\p), \tilde{\g}_-) = \{0\}$. (In particular, $\tilde{\k}$ uniquely determines an element $\k \in C^2(\g_-,\g)$ by: $\varphi \circ \k = \tilde{\k} \circ \varphi_-$.) Suppose in addition that the following technical compatibility conditions are met for an arbitrary element $Z \in \p_+$: Either $\varphi_0(Z) = 0$, or else $\tilde{B}(\varphi_-(X),\varphi_+(Z)) = \tilde{B}(\varphi_0(X),\varphi_0(Z))$ for all $X \in \g_-$; for any $A \in \g_0$, we assume $\tilde{B}(\varphi_-(A),\varphi_+(Z)) = \tilde{B}(\varphi_0(A),\varphi_0(Z))=0$. Then the following identity holds:
\begin{align}
c \varphi \circ (\codiff_{\p} \k)_1 = \mathrm{pr}_{\varphi(\g)} \circ (\codiff_{\tilde{\p}} \tilde{\k})_1 \circ \varphi_-.   \label{codiff 1}
\end{align}
\end{Lemma}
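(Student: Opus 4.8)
The plan is to prove the identity pointwise: both sides are linear maps $\g_- \to \tilde\g$, so it suffices to fix an arbitrary $X \in \g_-$ and match the two images. First I would expand the left-hand side using (\ref{codiff formula})--(\ref{codiff 1 and 2 formula}), the fact that $\varphi$ is a Lie algebra homomorphism, and the defining relation $\varphi \circ \k = \tilde\k \circ \varphi_-$, obtaining
\[
c\,\varphi\big((\codiff_\p \k)_1(X)\big) = c\sum_\a \big[\tilde\k(\varphi_-(X),\varphi_-(e_\a)),\,\varphi(e^\a)\big],
\]
where $\{e_\a\}$ is a basis of $\g_-$ and $\{e^\a\}$ its $B$-dual in $\p_+$. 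Since $e^\a \in \p_+$ and $\varphi(\p_+) \subset \tilde\p$ by (\ref{inf Good Fefferman}), the image $\varphi(e^\a)$ has no negative part, so $\varphi(e^\a) = \varphi_0(e^\a) + \varphi_+(e^\a)$; tracking the grade-zero summand $\varphi_0(e^\a)$ will be essential.

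For the right-hand side I would exploit that the contraction $S \mapsto \sum_{\tilde\a}[S(\tilde e_{\tilde\a}),\tilde e^{\tilde\a}]$ defining $(\codiff_{\tilde\p}\tilde\k)_1$ is built from the canonical element of $\tilde\g_- \tens \tilde\p_+$ determined by the $\tilde B$-duality $\tilde\g_- \isom (\tilde\p_+)^*$, hence is independent of the chosen basis $\{\tilde e_{\tilde\a}\}$ of $\tilde\g_-$. I would therefore adapt the basis to $\varphi$. Because $\varphi_-(\p_+) = 0$ (again from $\varphi(\p_+)\subset\tilde\p$) and $\tilde\g = \varphi(\g)+\tilde\p$ forces $\tilde\g_- = \varphi_-(\g_-) + \varphi_-(\g_0)$, one may split the basis into directions transverse to $\varphi_-(\g_0)$, taken among the $\varphi_-(e_\a)$, and directions spanning $\varphi_-(\g_0)$. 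The hypothesis $\tilde\k(\varphi_-(\p),\tilde\g_-)=\{0\}$ together with the alternating property of $\tilde\k$ then yields $\tilde\k(\varphi_-(X),\varphi_-(\g_0)) = \{0\}$, so all $\g_0$-directions drop out of the contraction and only the $\varphi_-(e_\a)$-directions survive.

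The heart of the argument is to identify the $\tilde B$-dual of $\varphi_-(e_\a)$ inside $\tilde\p_+$. Expanding and discarding all pairings between grading components whose degrees fail to cancel (using $\tilde B(\tilde\g_i,\tilde\g_j)=0$ for $i+j\neq 0$), one gets
\[
\delta_\a^\beta = B(e_\a,e^\beta) = c\,\tilde B(\varphi(e_\a),\varphi(e^\beta)) = c\big[\tilde B(\varphi_-(e_\a),\varphi_+(e^\beta)) + \tilde B(\varphi_0(e_\a),\varphi_0(e^\beta))\big].
\]
The two technical compatibility conditions are exactly what is needed here: the condition on $A \in \g_0$ shows the $\varphi_-(\g_0)$-directions are $\tilde B$-orthogonal to $\varphi_+(\p_+)$, so the surviving duals lie transverse to the discarded $\g_0$-block; and the first condition $\tilde B(\varphi_-(X),\varphi_+(Z)) = \tilde B(\varphi_0(X),\varphi_0(Z))$ converts the grade-zero summand $\varphi_0(e^\a)$ occurring in $\varphi(e^\a)$ on the left into precisely the contribution reconciling $\varphi_+(e^\a)$ (the genuine dual, up to the factor $c$) with the full $\varphi(e^\a)$. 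Combining these with the vanishing of the previous step and applying $\mathrm{pr}_{\varphi(\g)}$ to delete the components of the $\tilde\g$-bracket lying outside $\varphi(\g)$, the right-hand side collapses to $c\sum_\a [\tilde\k(\varphi_-(X),\varphi_-(e_\a)),\varphi(e^\a)]$, matching the left.

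I expect the main obstacle to be this last step. Because $\varphi$ need not be graded, the genuine $\tilde B$-dual of $\varphi_-(e_\a)$ lives in $\tilde\p_+$, whereas the object $\varphi(e^\a)$ produced on the left carries an extra grade-zero component $\varphi_0(e^\a) \in \tilde\g_0$. Showing that these $\varphi_0$-contributions are balanced, and that the pieces of the $\tilde\g$-valued contraction outside $\varphi(\g)$ are exactly those annihilated by $\mathrm{pr}_{\varphi(\g)}$, is the delicate bookkeeping the two compatibility hypotheses are tailored to resolve: the degenerate case $\varphi_0(e^\a)=0$ (where $\varphi$ is graded on $\p_+$) is immediate, and the general case is handled by the first compatibility condition supplying the complementary half of the pairing.
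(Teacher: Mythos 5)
Your outline is, in skeleton, the same proof as the paper's: fix $X\in\g_-$, write the left-hand side as $c\sum_{\alpha}[\tilde{\k}(\varphi_-(X),\varphi_-(e_\alpha)),\varphi(e^\alpha)]$, build $\tilde{B}$-dual bases of $\tilde{\g}_-$ and $\tilde{\p}_+$ adapted to $\varphi$ (rescaling by $c$ or $2c$ according to whether $\varphi_0(e^\alpha)$ vanishes, and completing by directions from $\varphi_-(\g_0)$), discard the $\varphi_-(\g_0)$-directions using $\tilde{\k}(\varphi_-(\p),\tilde{\g}_-)=\{0\}$, and compute the duality constants from $B=c\,\tilde{B}\circ\varphi$ together with the compatibility hypotheses. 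All of that matches the paper.

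The genuine gap is in the last step, which you yourself flag as the main obstacle but then only gesture at. After writing $2\varphi_+(e^\alpha)=\varphi(e^\alpha)+\bigl(\varphi_+(e^\alpha)-\varphi_0(e^\alpha)\bigr)$, the right-hand side equals $c\,\varphi\bigl((\codiff_{\p}\k)_1(X)\bigr)$ plus the error sum $c\sum[\tilde{\k}(\varphi_-(X),\varphi_-(e_\alpha)),\varphi_+(e^\alpha)-\varphi_0(e^\alpha)]$, and you claim that $\mathrm{pr}_{\varphi(\g)}$ simply deletes these brackets because their second factors lie outside $\varphi(\g)$. That inference is invalid as stated: the bracket of an element of $\varphi(\g)$ with a vector outside $\varphi(\g)$ can perfectly well have a nonzero component inside $\varphi(\g)$. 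What the paper actually proves is that each error bracket lies in the $\tilde{B}$-orthogonal complement of $\varphi(\g)$, and this rests on two points absent from your sketch: (a) the first factor $\tilde{\k}(\varphi_-(X),\varphi_-(e_\alpha))$ lies in $\varphi(\g)$ (by $\varphi\circ\k=\tilde{\k}\circ\varphi_-$), so by invariance of the Killing form, $\tilde{B}([A,W],\varphi(Y))=-\tilde{B}(W,[A,\varphi(Y)])$ with $[A,\varphi(Y)]\in\varphi(\g)$ since $\varphi(\g)$ is a subalgebra; hence everything reduces to showing that $W:=\varphi_+(e^\alpha)-\varphi_0(e^\alpha)$ is $\tilde{B}$-orthogonal to all of $\varphi(\g)$; and (b) that orthogonality must be checked against the three pieces $\g_-$, $\g_0$ and $\p_+$ separately: against $\g_-$ it is exactly the first compatibility condition, against $\g_0$ the second, but against $\p_+$ it follows from $B(\p_+,\p_+)=0$ (grading of $\g$) together with $\varphi(\p_+)\subset\tilde{\p}$ and $B=c\,\tilde{B}\circ\varphi$ --- a case your two cited conditions do not cover and your sketch does not mention. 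With (a) and (b) supplied, your argument closes and coincides with the paper's proof.
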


\begin{proof} Let $\{X_1,\ldots,X_n\}$ and $\{Z^1,\ldots,Z^n\}$ be $B$-dual bases of $\g_-$ and $\p_+$. We assume that these are ordered such that for some largest $n_0 \leq n$, we have $\varphi_0(Z^i)=0$ for all $1 \leq i \leq n_0$. Then for $1 \leq i,j \leq n_0$, we have:
\begin{align*}
\delta_i^j = B(X_i,Z^j) &= c \tilde{B}(\varphi(X_i),\varphi(Z^j)) \\
&= c \tilde{B}(\varphi_-(X_i),\varphi_+(Z^j)),
\end{align*}
\no since $\tilde{B}(\varphi_0(X_i),\varphi_+(Z^j)) = 0$, and since $\varphi(Z^j) = \varphi_+(Z^j)$ by assumption. From the compatibility conditions assumed, we have for $n_0 < i,j \leq n$:
\begin{align*}
\delta_i^j = B(X_i,Z^j) &= c \tilde{B}(\varphi(X_i),\varphi(Z^j)) \\
&= c(\tilde{B}(\varphi_-(X_i),\varphi_+(Z^j)) + \tilde{B}(\varphi_0(X_i),\varphi_0(Z^j))) \\
&= 2c \tilde{B}(\varphi_-(X_i),\varphi_+(Z^j)).
\end{align*}

\no Now, one also calculates in the same manner that for $1 \leq i \leq n_0 < j \leq n$, we have: $\tilde{B}(\varphi_-(X_i),\varphi_+(Z^j)) = \tilde{B}(\varphi_-(X_j),\varphi_+(Z^i)) = 0$. And we may choose linearly independent $\{U_1,\ldots,U_q\}$ from $\g_0$, and $\{V^i\}$ from $\tilde{\p}_+$ such that
$$\{c\varphi_-(X_1),\ldots,c\varphi_-(X_{n_0}),2c\varphi_-(X_{n_0+1}),\ldots,2c\varphi_-(X_{n}), \varphi_-(U_1),\ldots,\varphi_-(U_q)\}$$
and
$$\{\varphi(Z^1),\ldots,\varphi(Z^{n_0}), \varphi_+(Z^{n_0+1}),\ldots,\varphi_+(Z^{n}), V^1,\ldots,V^q \}$$
are $\tilde{B}$-dual bases of $\tilde{\g}_-$ and $\tilde{\p}_+$. Recalling the defining formula (\ref{codiff 1 and 2 formula}), we compute for $\tilde{X} = \varphi_-(X)$:
\begin{align*}
(\codiff_{\tilde{\p}} \tilde{\k})_1 (\tilde{X}) &= c \sum_{i=1}^{n_0} [\tilde{\k}(\tilde{X},\varphi_-(X_i)),\varphi(Z^i)] + 2c\sum_{j=n_0+1}^{n} [\tilde{\k}(\tilde{X},\varphi_-(X_j)),\varphi_+(Z^j)] + \sum_{l=1}^{q} [\tilde{\k}(\tilde{X},\varphi_-(U_l)),V^l] \\
&= c \sum_{i=1}^{n} [\tilde{\k}(\tilde{X},\varphi(X_i)),\varphi(Z^i)] + c\sum_{j=n_0+1}^{n} [\tilde{\k}(\tilde{X},\varphi(X_j)),\varphi_+(Z^j) - \varphi_0(Z^j)] \\
&= c \varphi((\codiff_{\p} \k)_1(X)) + c\sum_{j=n_0+1}^{n} [\tilde{\k}(\tilde{X},\varphi(X_j)),\varphi_+(Z^j) - \varphi_0(Z^j)]
\end{align*}
\no where the equality of the second line follows from the fact that $\tilde{\k}(\tilde{X},\varphi_-(U_l)) = \tilde{\k}(\tilde{X},\varphi_0(X_j)) = 0$, and by expanding: $2\varphi_+(Z^j) = \varphi(Z^j) + \varphi_+(Z^j) - \varphi_0(Z^j)$. The lemma now follows if we can show that the terms $[\tilde{\k}(\tilde{X},\varphi(X_j)),\varphi_+(Z^j) - \varphi_0(Z^j)]$ all lie in the orthogonal complement of $\varphi(\g)$ with respect to $\tilde{B}$. Since $\tilde{X}, \varphi(X_j) \in \varphi(\g)$, and therefore $\tilde{\k}(\tilde{X},\varphi(X_j)) \in \varphi(\g)$, by $\mathrm{Ad}$-invariance of $\tilde{B}$ this is equivalent to showing that $\tilde{B}(\varphi_+(Z^j) - \varphi_0(Z^j),\varphi(Y)) = 0$ for all $Y \in \g$. But this follows by direct calculation from the assumptions of the lemma.
\end{proof}

\begin{Lemma} \label{del2 identity} Let $\varphi: \g \hookrightarrow \tilde{\g}$ be an inclusion of semi-simple Lie algebras, with $\p \subset \g$ and $\tilde{\p} \subset \tilde{\g}$ parabolic sub-algebras, and suppose that the hypotheses of Lemma \ref{del1 identity} are fulfilled. Let $X \in \g_{-i}$ be fixed, $i > 0$, such that $\varphi(X) = \varphi_{-i}(X) + \varphi_0(X)$ and $[\varphi_{-i}(X),\varphi_0(Z)] = 0$ for all $Z \in \p_+$. Suppose finally that $\varphi_0(Z) \neq 0$ for all $Z \in \g_j$ with $1 \leq j < i$. Then for an element $\tilde{\k} \in C^2(\tilde{\g}_-,\tilde{\g})$ as in Lemma \ref{del1 identity} and the element $\k \in C^2(\g_-,\g)$ it induces, the following identity holds:
\begin{align}
2c \varphi((\codiff_{\p} \k)_2(X)) = (\codiff_{\tilde{\p}} \tilde{\k})_2(\varphi(X)) = (\codiff_{\tilde{\p}} \tilde{\k})_2(\varphi_{-i}(X)). \label{codiff 2}
\end{align}
\end{Lemma}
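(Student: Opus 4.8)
The plan is to expand both sides of (\ref{codiff 2}) directly via the defining formula (\ref{codiff 1 and 2 formula}), $(\codiff \psi)_2(X) = \sum_{\a} \psi([X,e^{\a}]_-,e_{\a})$, using exactly the $\tilde{B}$-dual bases of $\tilde{\g}_-$ and $\tilde{\p}_+$ constructed in the proof of Lemma \ref{del1 identity}. First I would refine the choice of $B$-dual bases $\{X_a\}$, $\{Z^a\}$ of $\g_-$ and $\p_+$ so that each $Z^a$ is homogeneous, say $Z^a \in \g_{j_a}$ with $X_a \in \g_{-j_a}$; this is possible since $B$ pairs $\g_{-j}$ perfectly with $\g_j$. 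The ordering of Lemma \ref{del1 identity} (with $\varphi_0(Z^a)=0$ exactly for $a \leq n_0$) is still available, and the hypothesis that $\varphi_0(Z) \neq 0$ for $Z \in \g_j$ with $1 \leq j < i$ forces every basis vector of grade $j_a < i$ into the range $a > n_0$.

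The second equality in (\ref{codiff 2}) I would dispatch first, as it is purely a grading statement. Writing $\varphi(X) = \varphi_{-i}(X) + \varphi_0(X)$, for any dual basis element $\tilde{Z} \in \tilde{\p}_+$ the bracket $[\varphi_0(X),\tilde{Z}]$ lies in positive $\tilde{\g}$-degrees, so $[\varphi(X),\tilde{Z}]_- = [\varphi_{-i}(X),\tilde{Z}]_-$. Since $(\codiff_{\tilde{\p}}\tilde{\k})_2$ depends on its argument only through these negatively graded brackets, its value is unchanged when $\varphi(X)$ is replaced by $\varphi_{-i}(X)$.

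For the first equality I would evaluate $(\codiff_{\tilde{\p}}\tilde{\k})_2(\varphi(X))$ term by term against the three families of dual pairs. The pairs built from $U_l \in \g_0$ contribute $\tilde{\k}(\,\cdot\,,\varphi_-(U_l)) = 0$ by the hypothesis $\tilde{\k}(\varphi_-(\p),\tilde{\g}_-) = \{0\}$ together with skew-symmetry. For the remaining pairs the key computation is $[\varphi(X),\varphi_+(Z^a)]_- = \varphi_-([X,Z^a])$: expanding $\varphi([X,Z^a]) = [\varphi(X),\varphi(Z^a)]$, using $\varphi(Z^a) = \varphi_0(Z^a) + \varphi_+(Z^a)$ (as $\varphi(\p_+) \subset \tilde{\p}$), invoking $[\varphi_{-i}(X),\varphi_0(Z^a)] = 0$ to kill the one dangerous term, and discarding the brackets landing in $\tilde{\g}_{\geq 0}$. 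I would then split by the grade $j_a$: for $j_a > i$ one has $[X,Z^a] \in \p_+$, so $\varphi_-([X,Z^a]) = 0$; for $j_a = i$ one gets $\varphi_-([X,Z^a]_0) \in \varphi_-(\g_0) \subset \varphi_-(\p)$, annihilated by $\tilde{\k}$; and only $j_a < i$ survives, where $[X,Z^a]_- = [X,Z^a]$ and the relation $\varphi \circ \k = \tilde{\k} \circ \varphi_-$ converts the term into $\varphi(\k([X,Z^a]_-,X_a))$. Since all surviving indices satisfy $a > n_0$, they carry the factor $2c$, and summing reproduces $2c\,\varphi((\codiff_{\p}\k)_2(X))$ precisely, because on the $\g$-side $[X,Z^a]_- = 0$ whenever $j_a \geq i$.

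I expect the main obstacle to be the bookkeeping that guarantees the factor $2c$ (rather than $c$) on all surviving terms: this is exactly where the hypothesis on $\varphi_0(\g_j)$ for $1 \leq j < i$ is needed, forcing every grade-$<i$ basis vector into the ``doubled'' part of the dual basis, while the grade-$=i$ contributions must be shown to vanish through the defining property of $\tilde{\k}$ on $\varphi_-(\p)$. The assumption $[\varphi_{-i}(X),\varphi_0(Z)] = 0$ is the other load-bearing hypothesis, being precisely what prevents a spurious negatively graded contribution from $\varphi_0(Z^a)$ from spoiling the identity $[\varphi(X),\varphi_+(Z^a)]_- = \varphi_-([X,Z^a])$.
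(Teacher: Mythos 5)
Your proof is correct and follows essentially the same route as the paper's: both expand $(\codiff_{\tilde{\p}}\tilde{\k})_2$ against the $\tilde{B}$-dual bases constructed in the proof of Lemma \ref{del1 identity}, use the hypothesis $[\varphi_{-i}(X),\varphi_0(Z)]=0$ (together with $\tilde{\k}$ vanishing on $\tilde{\p}$ and on $\varphi_-(\p)$) to reduce the brackets to $\varphi_-([X,Z^a])$, convert to the $\g$-side via $\varphi\circ\k=\tilde{\k}\circ\varphi_-$, and invoke the assumption $\varphi_0(Z)\neq 0$ for $Z\in\g_j$, $1\leq j<i$, to force every surviving term into the ``doubled'' part of the dual basis, producing the factor $2c$. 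Your explicit homogeneous refinement of the dual bases, the separate grading argument for the second equality in (\ref{codiff 2}), and the three-way case split on $j_a$ are slightly tidier write-ups of steps the paper leaves implicit or handles after converting to $\k$ (where terms with $[X,Z^a]\in\p$ die because $\p\subset\mathrm{ker}(\k)$), but the mechanism is identical.
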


\begin{proof} As in the proof of Lemma \ref{del1 identity}, choose $B$-dual bases $\{X_i\}$ and $\{Z^i\}$ of $\g_-$ and $\p_+$, and from these construct $\tilde{B}$-dual bases $\{c\varphi_-(X_i), 2c\varphi_-(X_{j}), \varphi_-(U_l)\}$ and $\{\varphi(Z^i), \varphi_+(Z^{j}), V^l\}$ of $\tilde{\g}_-$ and $\tilde{\p}_+$. By the extra assumption that $\varphi_0(Z) \neq 0$ for all $Z \in \g_j$ with $1 \leq j < i$, we can take the bases of $\g_-,\p_+$ to be ordered such that $X_j \in \g_{-k} \dsum \ldots \dsum \g_{-i}$ (respectively, $Z^j \in \g_i \dsum \ldots \dsum \g_k$) for all $1 \leq j \leq n_0+a$ and some $a \geq 0$. Then applying the formula (\ref{codiff 1 and 2 formula}), we compute:
\begin{align*}
(\codiff_{\tilde{\p}} \tilde{\k})_2(\varphi_{-i}(X)) &= c\sum_{j=1}^{n_0} \tilde{\k}([\varphi_{-i}(X),\varphi(Z^j)],\varphi(X_j)) +2c\sum_{j=n_0+1}^{n} \tilde{\k}([\varphi_{-i}(X),\varphi_+(Z^j)],\varphi_-(X_j)) + 0 \\
&= c\sum_{j=1}^{n_0} \tilde{\k}([\varphi(X),\varphi(Z^j)],\varphi(X_j)) + 2c\sum_{j=n_0+1}^{n} \tilde{\k}([\varphi(X),\varphi(Z^j)],\varphi(X_j)) \\
&= c \sum_{j=1}^{n_0} \varphi(\k([X,Z^j],X_j)) + 2c \sum_{j=n_0+1}^{n} \varphi(\k([X,Z^j],X_j)).
\end{align*}
\no Here we used the fact that $[\varphi_0(X),\varphi_0(Z^j)], [\varphi_0(X),\varphi_+(Z^j)]$ and $\varphi_0(X_i)$ are all contained in $\tilde{\p}$, which lies in the kernel of $\tilde{\k}$, and the assumption that $[\varphi_{-i}(X),\varphi_0(Z^j)] = 0$, to go from the first to the second line, while the equality of the last line follows from the definition of $\k$ from $\tilde{\k}$, using that $\varphi$ is a Lie algebra homomorphism. And for all $1 \leq j \leq n+l$, since $Z^j \in \g_i \dsum \ldots \dsum \g_k$, we have $[X,Z^j] \in \p \subset \mathrm{ker}(\k)$, so the final line equals $2c \varphi(\sum_{j=n_0+a+1}^{n}\k([X,Z^j],X_j)) = 2c \varphi((\codiff_{\p} \k)_2(X))$.
\end{proof}

\subsection{Proof of theorem \ref{normality} (A.i $\Leftrightarrow$ A.ii)}

\begin{proof} Since normality and torsion-freeness are purely local properties, we assume we're in the situation with fibrations $F \rightarrow N \rightarrow M$, and the total space $\Gbdle^{qc}$ is a principal bundle over each of these manifolds, for varying structure group. By equivariance of the curvature functions, it suffices to show that for any point $u \in \Gbdle^{qc}$ and any element $X \in \g^{qc}$, we have the following: (A) $\k^{qc}(u)(X) \in \p^{co} \cap \g^{qc}$; (B) $\codiff_{\p^{co}}(\k^{qc}(u))(X) = 0$; and (C) $\codiff_{\p^{qc}}(\k^{co}(u))(X) = 0$ (this makes sense since $\k^{co}(u)(\p^{qc},.) = \{0\}$, so it may be viewed as an element of $C^2(\g^{qc}_-,\g^{qc})$). The proof will use applications of lemmas \ref{del1 identity} and \ref{del2 identity} from the previous sub-section to the co-chains $\k^{qc}$ and $\k^{co}$, and in fact, we'll also get for free that $\codiff_{\p^{cr}}(\k^{qc}(u))(X) = \codiff_{\p^{cr}}(\k^{co}(u))(X) = 0$. From the information in appendix A, it is a matter of calculation with matrices to check that the hypotheses of lemma \ref{del1 identity} are fulfilled for both the inclusions $\g^{qc} \hookrightarrow \g^{cr}$ and $\g^{cr} \hookrightarrow \g^{co}$. And for the first inclusion, the hypotheses of lemma \ref{del2 identity} are satisfied for the element $i \in \g^{qc}_{-2}$.\\

\no Now to the proof of (A) and (B). First we prove that as a result of $\omega^{qc}$ being torsion-free (i.e. $\mathrm{im}(\k^{qc}(u)) \subseteq \p^{qc}$), we can in fact conclude that $\mathrm{im}(\k^{qc}(u)) \subseteq (\p^{co} \cap \g^{qc}) \subset (\p^{cr} \cap \g^{cr})$. This is a result of proposition \ref{dell of lowest component}. Since $\mathrm{im}(\k^{qc}) \subseteq \p^{qc}$, in particular we have $(\k^{qc})^{(1)} = 0$ and applying proposition \ref{dell of lowest component} to $(\k^{qc})^{(2)}$ for $X, Y \in \g^{qc}_{-1}$ and $Z \in \g^{qc}_{-2}$, by the definition of $\diff$ we have:
\begin{align*}
0 &= (\diff (\k^{qc})^{(2)})(Z,X,Y) \\
&= [Z, (\k^{qc})^{(2)}(X,Y)] - [X, (\k^{qc})^{(2)}(Z,Y)] + [Y, (\k^{qc})^{(2)}(Z,X)] \\
&\, \, - (\k^{qc})^{(2)}([Z,X],Y) + (\k^{qc})^{(2)}([Z,Y],X) - (\k^{qc})^{(2)}([X,Y],Z) \\
&= [Z, (\k^{qc})^{(2)}(X,Y)].
\end{align*}

\no Taking an arbitrary $Z \in \g^{qc}_{-2} \isom \mathrm{Im}(\H)$, we see that the last line will only vanish, in general, if $(\k^{qc})^{(2)}(X,Y)$ takes values in the subspace $\p^{co} \cap \g^{qc}_0$ of $\g^{qc}_0$. Thus the harmonic curvature $\k^{qc}_H = (\k^{qc})^{(2)}$, which is non-vanishing only on $\g^{qc}_{-1} \alt \g^{qc}_{-1}$, has values in $\p^{co} \cap \g^{qc}$. Since this is a $P^{qc}$-module, proposition \ref{harmonic curvature} implies that $\k^{qc}$ has values in this module.\\

\no Furthermore, we claim that $\k^{qc}$ satisfies $(\codiff_{\p^{qc}} \circ \k^{qc})_1 = (\codiff_{\p^{qc}} \circ \k^{qc})_2 = 0$ (i.e. both terms of $\codiff_{\p^{qc}} \circ \k^{qc}$ vanish, in addition to their sum). This is in fact a general feature of all torsion-free, normal parabolic geometries, cf. the proof of theorem 3.5 in \cite{Cap inf aut}. This is seen by looking at the corresponding identity for $\codiff$, considered as a map $\alt^2 \p_+ \tens \g \rightarrow \p_+ \tens \g$, which can be written for basis vectors as
\begin{align*}
\codiff_{\p}: Z_1 \wedge Z_2 \tens A \mapsto Z_1 \tens [Z_2,A] - Z_2 \tens [Z_1,A] - [Z_1,Z_2]\tens A,
\end{align*}
\no and we see that the operator $(\codiff_{\p})_2$ corresponds to the map $[\, ,]\tens \mathrm{id} : \Lambda^2 \p_+ \tens \g \rightarrow \p_+ \tens \g.$\\

\no The irreducible component $H^2_2(\g^{qc}_-,\g^{qc})$ (in which $\k^{qc}_H$ lives) corresponds to a $\g^{qc}_0$-submodule in $\Lambda^2 \p^{qc}_+ \tens \g^{qc}$. The map $[\, ,]\tens \mathrm{id}$ gives a homomorphism of $\g^{qc}_0$-submodules, so by Schur's Lemma it is either identically zero on the submodule corresponding to $H^2_2(\g^{qc}_-,\g^{qc})$, or maps it injectively into $\p^{qc}_+ \tens \g^{qc}$. But by Kostant's version of BBW, the submodule corresponding to $H^2_2(\g^{qc}_-,\g^{qc})$ has multiplicity $1$ in $\Lambda^*\p^{qc}_+ \tens \g^{qc}$, and hence $\k^{qc}_H \in \mathrm{ker}([\, ,]\tens \mathrm{id})$. Applying proposition \ref{harmonic curvature} ($\mathrm{ker}([\, ,] \tens \mathrm{id})$ is a $P^{qc}$-submodule), we see that the same holds for the full curvature $\k^{qc}$.\\

\no Thus we have $(\codiff_{\p^{cr}} \circ \k^{qc})_2 = 0$ by lemma \ref{del2 identity}, and for a fixed $X \in \g^{qc}$, we have (up to a constant) $$(\codiff_{\p^{cr}}\k^{qc})_1(X) = \mathrm{proj}_{(\g^{qc})^{\perp}}((\codiff_{\p^{cr}}\k^{qc})_1(X)) =: \psi \in \g^{cr},$$
\no by lemma \ref{del1 identity}. We now claim that $\psi$ must vanish. Denote with $\Omega$ the symplectic form defining $\sp(2(n+2),\C)$ as a complex subalgebra of $\so(2(n+2),\C)$. We have the standard identity
$$\sp(n+1,1) = \su(2(n+1),2) \cap \sp(2(n+2),\C),$$
and using the splitting $2A = (A + \Omega A \Omega) + (A - \Omega A \Omega)$, for $A$ any matrix in $\su(2(n+1),2)$, we can identify the subspace $(\g^{qc})^{\perp} \subset \g^{cr}$ as the set of those matrices which anti-commute with multiplication by $j$ (and hence also $k$) on $\H^{n+2} = \C^{2n+4}$. Since $\k^{qc}$ has values in $\p^{co} \cap \g^{qc} \subset \p^{cr}$, then from the formula for $(\codiff_{\p^{cr}} \k^{qc})_1$ (cf. (\ref{codiff 1 and 2 formula}) in section 2), we see that $\psi \in [\p^{cr},\p^{cr}_+] \subset \p^{cr}_+$. The subalgebra $\p^{cr}_+$ can be characterized as those maps in $\g^{cr}$ which map all vectors in the complex orthocomplement $(\C e_0)^{\perp_{\C}}$ into $\C e_0$, where $\C e_0$ is the complex light-like line stabilized by $P^{cr}$. But the subspace $(\H e_0)^{\perp_{\H}}$ is contained in the former subspace, and since $\psi$ anti-commutes with both $j$ and $k$, the image $\psi(\H e_0^{\perp_{\H}})$ is a quaternionic subspace, contained in the complex line $\C e_0$, and must be $\{0\}$.\\

\no Therefore, the map $\psi$ is determined on the quotient $\H^{n+2}/\H e_0^{\perp_{\H}}$. Let $v_0 \in \C e_0$ be a non-zero vector, and let $x_0 \in \H^{n+2}$ be its dual vector: $\prec v_0,x_0 \succ = 1$. Letting $w_0 := \psi(x_0)$, then $\{x_0,jx_0\}$ induce a complex basis of the quotient space, and the map $\psi$ is determined by $(x_0,jx_0) \mapsto (w_0,-jw_0)$. On the other hand, since $jx_0 \in \C e_0^{\perp_{\C}}$, we must have $w_0 \in \C e_0$, i.e. $w_0=j z_0 v_0$ for some $z_0 \in \C$. Therefore, the map $\psi$ in question is determined by $\psi : (x_0,jx_0) \mapsto (z_0 jv_0,z_0 v_0)$, which is easily seen to be hermitian-symmetric with respect to $Q_{2n+2,2}$. Thus, $\psi \in \g^{cr}$ only if it is identically zero.\\

\no Thus we have $\codiff_{\p^{cr}} \circ \k^{qc} = 0$. Applying lemma \ref{del1 identity} again, we see that (up to a constant) $\codiff_{\p^{co}} \k^{qc} (X) = (\codiff_{\p^{co}} \k^{qc})_1(X) = \mathrm{pr}_{(\g^{cr})^{\perp}}((\codiff_{\p^{co}} \k^{qc})_1(X)) \in \g^{co}$. Now the final  argument in the proof of theorem 2.5 of \cite{CG06a} (the preceding argument is just a symplectic variation on that argument), shows that this also vanishes, proving the claim (B).\\

\no To see claim (C), note that lemma \ref{del1 identity} implies that $(\codiff_{\p^{cr}} \circ \k^{co})_1 = (\codiff_{\p^{qc}} \circ \k^{co})_1 = 0$, since $0 = \codiff_{\p^{co}} \circ \k^{co} = (\codiff_{\p^{co}} \circ \k^{co})_1$, since $\g^{co}$ is $\vert 1 \vert$-graded. As for the terms $(\codiff_{\p^{cr}} \circ \k^{co})_2$ and $(\codiff_{\p^{qc}} \circ \k^{co})_2$, we see directly from the definition that these can only act non-trivially on elements of $\g^{cr}_{-2}$ and $\g^{qc}_{-2}$, respectively. But for $i \in \g^{cr}_{-2} \isom \mathrm{Im}(\C)$, and $z \in \g^{cr}_{+1}$, we have the commutator rule $[i,z] = i\overline{z}^t \in \g^{cr}_{-1}$ (and similar identities hold for $\g^{qc}$). Plugging into the definition, we get, e.g. $$(\codiff_{\p^{cr}} \circ \k^{co})_2 (i) = \sum_{a=1}^{2n+2}\k^{co}(iX_a,X_a),$$ for a basis $\{X_a\}$ of $\g^{cr}_{-1}$. This, and the analog terms for $\codiff_{\p^{qc}}$, are seen to vanish by lemma \ref{trace formulae}. \end{proof}

\section{Weyl structures and Fefferman metrics}

In section 4.2, we made use of the known expressions for the canonical conformal Cartan connection $\omega^{co}$ in terms of a fixed metric $f \in [f]$ in the conformal class. These expressions arise from the fact that a choice of metric $f$ determines an (exact) \emph{Weyl structure} for the conformal Cartan geometry. In \cite{CS03}, \v{C}ap and J. Slov\'ak developed a nice generalization of Weyl structures in conformal geometry to general parabolic geometries. In this section, we make use of this theory to study Fefferman spaces. We develop a procedure for inducing Weyl structures on the Fefferman space under certain algebraic assumptions. Together with the formula for a component of the Weyl structure of a qc manifold computed in \cite{alt2}, this determines an explicit expression for certain metrics in the conformal class $(F_{qc},[f_{qc}])$, which correspond to exact Weyl structures of the parabolic geometry $(\Gbdle^{qc} \rightarrow M,\omega^{qc})$ (these in turn correspond to metrics $g \in [g]$ on the qc distribution $\Dbdle$). The result is the equivalence of conditions (iv) and (i) of theorem A. To begin, we recall some of the fundamental notions and properties from \cite{CS03}.\\

\begin{Definition} [\cite{CS03}] Let $(\pi:\Gbdle \rightarrow M,\omega)$ be a parabolic geometry of type $(G,P)$ on a smooth manifold $M$, and consider the underlying principal $G_0$ bundle $\pi_0 :\Gbdle_0 \rightarrow M$ and the canonical projection $\pi_+:\Gbdle \rightarrow \Gbdle_0 := \Gbdle/P_+$. A \emph{Weyl structure} for $(\Gbdle,\omega)$ is a global, $G_0$-equivariant section $\sigma: \Gbdle_0 \rightarrow \Gbdle$ of $\pi_+$.
\end{Definition}

By proposition 3.2 of \cite{CS03}, global Weyl structures always exist for parabolic geometries in the real (smooth) category, and they exist locally in the holomorphic category. Evidently, a choice of Weyl structure $\sigma$ determines a reduction of $\Gbdle$ to the structure group $G_0$, and this may be used to decompose any associated vector bundle into irreducible components with respect to $G_0$. In particular, it determines an isomorphism of the adjoint tractor bundle with its associated graded bundle: $$\Abdle \isom^{\sigma} \mathrm{Gr}(\Abdle) = \Abdle_{-k} \dsum \ldots \dsum \Abdle_k.$$

Considering the pull-back of the Cartan connection, $\sigma^*\omega$, the $\vert k \vert$-grading of $\g$ gives a decomposition into $G_0$-invariant components, $$\sigma^*\omega = \sigma^*\omega_{-k} + \ldots + \sigma^*\omega_k,$$ and by the observation that $\sigma$ commutes with fundamental vector fields and the defining properties of the Cartan connection, it follows that $\sigma^*\omega_i$ is horizontal for all $i \neq 0$, and that $\sigma^*\omega_0$ defines a principal $G_0$ connection for $\Gbdle_0 \rightarrow M$ (cf. 3.3 of \cite{CS03}). In general we will be interested in the decomposition $\sigma^*\omega = \sigma^*\omega_- + \sigma^*\omega_0 + \sigma^*\omega_+.$ The negative component $\sigma^*\omega_- \in \Omega^1(M;\Abdle_{-})$ is called the \emph{soldering form} of $\sigma$, and defines an isomorphism $TM \isom \mathrm{Gr}(TM)$. The positive component, denoted by $\mathsf{P} := \sigma^*\omega_+ \in \Omega^1(M;\Abdle_+)$, is called the \emph{Rho-tensor} and generalizes the Schouten tensor of conformal geometry. The connection $\sigma^*\omega_0 \in \Omega^1(\Gbdle_0,\g_0)$ is called the \emph{Weyl connection}.\\

An element $E_{\lambda} \in \mathfrak{z}(\g_0)$ is called a \textit{scaling element} if it acts by a non-zero real scalar on each $G_0$-irreducible component of $\p_+$. A \textit{bundle of scales} is a principal $\R^+$ bundle $\mathcal{L}^{\lambda} \rightarrow M$ which is associated to $\Gbdle_0$ via a homomorphism $\lambda: G_0 \rightarrow \R^+$ whose derivative is given by $\lambda'(A) = B(E_{\lambda},A)$ for some scaling element and all $A \in \g_0$. Scaling elements always exist, they give rise to canonical bundles of scales, and these admit global smooth sections (cf. proposition 3.7 of \cite{CS03}); for example, taking as $E_{\lambda}$ the grading element gives, in the conformal, CR and qc cases, the $\R^+$ bundles one would expect: the ray bundle of metrics in the conformal class, of (oriented) pseudo-hermitian forms, and of metrics on $\Dbdle$ in the conformal class $[g]$, respectively. The Weyl connection $\sigma^*\omega_0$ of a Weyl structure determines a connection 1-form $\eta^{\sigma}$ on a fixed bundle of scales, which is induced by the 1-form $\lambda' \circ \sigma^*\omega_0 \in \Omega^1(\Gbdle_0)$. In fact, this correspondence is bijective: any Weyl structure is uniquely determined by the connection form $\eta^{\sigma}$, cf. theorem 3.12 of \cite{CS03}. In particular, this leads to distingushed Weyl structures characterised by the properties of $\eta^{\sigma}$: A Weyl structure $\sigma$ is \textit{closed} if the curvature of $\eta^{\sigma}$ vanishes; it is \textit{flat} if $\eta^{\sigma}$ has trivial holonomy. Exact Weyl structures correspond to global smooth sections of the scale bundle $\mathcal{L}^{\lambda}$; in our cases, to fixed metrics or pseudo-hermitian forms, respectively.\\

Now let $G \subset \tilde{G}$ be an inclusion of semi-simple Lie groups, and $P \subset G$, $\tilde{P} \subset \tilde{G}$ parabolic subgroups such that $G$ acts locally transitively on $\tilde{G}/\tilde{P}$ and $P \supseteq (G \cap \tilde{P})$ as in the abstract set-up for a generalized Fefferman construction on parabolic geometries. For the Lie algebras $\g$ and $\tilde{\g}$ of $G$ and $\tilde{G}$, respectively, we take as fixed the $\vert k \vert$-, respectively $\vert m \vert$-gradings associated to the parabolic subgroups. In particular, we have fixed subgroups $G_0 \subset P$ and $\tilde{G}_0 \subset \tilde{P}$ consisting of all elements preserving the gradings. The (normal) subgroups $P_+ := \mathrm{exp}(\p_+) \subset P$ and $\tilde{P}_+ := \mathrm{exp}(\tilde{\p}_+) \subset \tilde{P}$ are, as always, given and we assume that $G$ and $\tilde{G}$ are taken so that $P = G_0 \ltimes P_+$ and $\tilde{P} = \tilde{G}_0 \ltimes \tilde{P}_+$.\\

\begin{Proposition} \label{Fefferman Weyl structures} Let $(\pi: \Gbdle \rightarrow M,\omega)$ be a parabolic geometry of type $(G,P)$ and $(\tilde{\pi}:\tilde{\Gbdle} \rightarrow \tilde{M},\tilde{\omega})$ its induced Fefferman space of type $(\tilde{G},\tilde{P})$. Suppose that in addition to the standard conditions required for a Fefferman construction, we have the following: (i) $P_+ \subset \tilde{P}$ and (ii) $(G_0 \cap \tilde{P}) \subset \tilde{G}_0$. Then any Weyl structure $\sigma$ for $(\Gbdle,\omega)$ induces a unique Weyl structure $\tilde{\sigma}$ for $(\tilde{\Gbdle},\tilde{\omega})$. If we suppose, further, that scaling elements $E_{\lambda} \in \mathfrak{z}(\g_0)$ and $E_{\tilde{\lambda}} \in \mathfrak{z}(\tilde{\g}_0)$ exist, such that $B_{\g}(E_{\lambda},X) = c\cdot B_{\tilde{\g}}(E_{\tilde{\lambda}},X)$ for some constant $c$ and all $X \in \g$, then the Weyl structure $\tilde{\sigma}$ is closed (exact) whenever $\sigma$ is.
\end{Proposition}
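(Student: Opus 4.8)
The plan is to construct the induced Weyl structure $\tilde\sigma$ explicitly from the canonical inclusion $j:\Gbdle\hookrightarrow\tilde\Gbdle$ of the Fefferman construction (the reduction to $G\cap\tilde P$), and then to reduce both the closedness and the exactness assertions to the two facts that tie the upstairs data to the downstairs data: the identity $j^*\tilde\omega=\omega$, and the Killing-form compatibility $B_\g(E_\lambda,\cdot)=c\,B_{\tilde\g}(E_{\tilde\lambda},\cdot)$ on $\g$. First I set $\Gbdle':=\sigma(\Gbdle_0)\subseteq\Gbdle$, the $G_0$-reduction determined by $\sigma$, and put $\tilde\Sigma:=j(\Gbdle')\cdot\tilde G_0\subseteq\tilde\Gbdle$. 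Writing an arbitrary element of $G\cap\tilde P\subseteq P=G_0\ltimes P_+$ as $g_0p_+$, hypothesis (i) $P_+\subseteq\tilde P$ gives $p_+\in P_+\subseteq G\cap\tilde P$ and hypothesis (ii) $G_0\cap\tilde P\subseteq\tilde G_0$ gives $g_0\in G_0\cap\tilde P\subseteq\tilde G_0$; combining this with the $G_0$-equivariance of $\sigma$, a short computation shows that $\tilde\Sigma$ meets each $\tilde P$-fibre of $\tilde\Gbdle\to\tilde M$ in exactly one $\tilde G_0$-orbit, so $\tilde\Sigma$ is a $\tilde G_0$-reduction, i.e. the image of a unique $\tilde G_0$-equivariant section $\tilde\sigma$ of $\tilde\pi_+$. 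I record for later that, since $P_+\subseteq G\cap\tilde P$, the reduction $\Gbdle'$ surjects both onto $M$ as a principal $G_0$-bundle (call this $q_1$) and onto $\tilde M=\Gbdle/(G\cap\tilde P)$ via $q_2:\Gbdle'\hookrightarrow\Gbdle\to\tilde M$, with $p\circ q_2=q_1$ for the fibration $p:\tilde M\to M$.

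For the closed case the point is that on the common reduction $\Gbdle'$ all scale data is governed by restrictions of $\omega$. Since $E_\lambda\in\g_0$ and $B_\g(\g_i,\g_j)=0$ for $i+j\neq0$, the scale connection $\eta^\sigma$ is induced by $B_\g(E_\lambda,\sigma^*\omega_0)$, which under $\sigma:\Gbdle_0\cong\Gbdle'$ is $B_\g(E_\lambda,\omega)|_{\Gbdle'}$; since the structure group $\R^+$ is abelian, its curvature $\Theta^\sigma\in\Omega^2(M)$ satisfies $q_1^*\Theta^\sigma=d\big(B_\g(E_\lambda,\omega)|_{\Gbdle'}\big)$. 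The analogous description holds upstairs on $\tilde\Gbdle':=\tilde\sigma(\tilde\Gbdle_0)$, and pulling the upstairs curvature back along $q_2$ and using $j^*\tilde\omega=\omega$ (with $\omega$ viewed in $\tilde\g$) together with the compatibility of Killing forms gives
\[
q_2^*\Theta^{\tilde\sigma}=d\big(B_{\tilde\g}(E_{\tilde\lambda},\omega)|_{\Gbdle'}\big)=\tfrac1c\,d\big(B_\g(E_\lambda,\omega)|_{\Gbdle'}\big)=\tfrac1c\,q_1^*\Theta^\sigma=\tfrac1c\,q_2^*p^*\Theta^\sigma .
\]
As $q_2$ is a surjective submersion, $q_2^*$ is injective on forms, so $\Theta^{\tilde\sigma}=\tfrac1c\,p^*\Theta^\sigma$; hence $\Theta^\sigma=0$ forces $\Theta^{\tilde\sigma}=0$, and $\tilde\sigma$ is closed whenever $\sigma$ is.

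For the exact case I argue in the same spirit, but at the level of reductions of the bundle of scales. Exactness of $\sigma$ means $\mathcal{L}^\lambda$ carries a global $\eta^\sigma$-parallel section, equivalently a reduction $\Gbdle''\subseteq\Gbdle'$ to $\ker\lambda\subseteq G_0$ along which the scale form $B_\g(E_\lambda,\omega)$ vanishes. Differentiating the Killing-form compatibility gives $\lambda'=c\,\tilde\lambda'$ on $\g_0\cap\tilde\p$, whence $\ker\lambda\cap(G_0\cap\tilde P)\subseteq\ker\tilde\lambda$; the saturation $\tilde\Gbdle'':=j(\Gbdle'')\cdot\ker\tilde\lambda$ is then a $\ker\tilde\lambda$-reduction of $\tilde\Gbdle'$ by the very argument used to build $\tilde\Sigma$, and it still surjects onto $\tilde M$ because $P_+\subseteq G\cap\tilde P$. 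On $\tilde\Gbdle''$ the upstairs scale form $B_{\tilde\g}(E_{\tilde\lambda},\tilde\omega)$ vanishes on $j(\Gbdle'')$ (there it equals $\tfrac1c B_\g(E_\lambda,\omega)=0$) and on the added $\ker\tilde\lambda$-fundamental directions (where $\tilde\omega$ reproduces Lie algebra elements of $\ker\tilde\lambda$, on which $B_{\tilde\g}(E_{\tilde\lambda},\cdot)=\tilde\lambda'=0$); since these directions span $T\tilde\Gbdle''$, the reduction $\tilde\Gbdle''$ is horizontal for $\eta^{\tilde\sigma}$, i.e. $\eta^{\tilde\sigma}$ admits a global parallel section, so $\tilde\sigma$ is exact.

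The main obstacle, and the reason the two conditions (i)–(ii) and the compatibility of scaling elements are needed in exactly this form, is the bookkeeping across the two distinct base manifolds: the naive comparison map $\tilde\pi_+\circ j\circ\sigma:\Gbdle_0\to\tilde\Gbdle_0$ is only $(G_0\cap\tilde P)$-equivariant and does \emph{not} descend to a map $M\to\tilde M$. The device that resolves this is to perform every comparison on the single reduction $\Gbdle'=\sigma(\Gbdle_0)$, which by $P_+\subseteq G\cap\tilde P$ surjects onto both $M$ and $\tilde M$, and to transport the scale connection through $j^*\tilde\omega=\omega$ and $B_\g(E_\lambda,\cdot)=c\,B_{\tilde\g}(E_{\tilde\lambda},\cdot)$. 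The one genuinely delicate step — which uses both (i) and (ii) essentially — is verifying that $\tilde\Sigma$ (and its exact analogue $\tilde\Gbdle''$) really is a reduction, i.e. that it meets each fibre in a single orbit and is a smooth subbundle; this is where I would expect to spend the most care.
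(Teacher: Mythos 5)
Your construction of the induced Weyl structure and your proof of the closed case are correct, and they are essentially the paper's own argument in a different picture: where you build the $\tilde{G}_0$-reduction $\tilde{\Sigma}=j(\sigma(\Gbdle_0))\cdot\tilde{G}_0\subset\tilde{\Gbdle}$, the paper builds the $\tilde{G}_0$-bundle $\Gbdle_0\times_{G_0\cap\tilde{P}}\tilde{G}_0$ over $\tilde{M}$ and extends $\iota\circ\sigma$ equivariantly to a section $\tilde{\sigma}$; these are the reduction picture and the section picture of the same object, and hypotheses (i), (ii) enter in exactly the same way. Likewise your curvature identity $\Theta^{\tilde{\sigma}}=\tfrac{1}{c}\,p^*\Theta^{\sigma}$ is precisely the exterior derivative of the paper's one-form identity $\lambda_*\circ\sigma^*\omega_0=c\,(\iota_0)^*(\tilde{\lambda}_*\circ\tilde{\sigma}^*\tilde{\omega}_0)$, transported from $\Gbdle_0$ to $\Gbdle'=\sigma(\Gbdle_0)$.

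The exact case, however, contains a genuine gap: the claim that $\tilde{\Gbdle}''=j(\Gbdle'')\cdot\ker\tilde{\lambda}$ ``still surjects onto $\tilde{M}$ because $P_+\subseteq G\cap\tilde{P}$''. Since the fibres of $\tilde{\Gbdle}'\rightarrow\tilde{M}$ are $\tilde{G}_0$-orbits, hence $\ker\tilde{\lambda}$-invariant, $\tilde{\Gbdle}''$ meets the fibre over $x$ if and only if $x\in q_2(\Gbdle'')$, so what you actually need is surjectivity of $q_2|_{\Gbdle''}$. The argument that gave surjectivity for $\tilde{\Sigma}$ does not transfer: there, an arbitrary point of $\Gbdle$ is moved into $\Gbdle'$ by an element of $P_+$, which lies in $G\cap\tilde{P}$ by (i); but to move a point $u'\in\Gbdle'$ into the smaller set $\Gbdle''$ you must act by an element of $G_0$ with a prescribed scale value, and nothing puts such an element inside $\tilde{P}$. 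Concretely, the fibres of $q_2|_{\Gbdle'}$ are the $(G_0\cap\tilde{P})$-orbits (as your own uniqueness argument shows), and if $u'g_*\in\Gbdle''$ with $g_*\in G_0$, then the orbit $u'(G_0\cap\tilde{P})$ meets $\Gbdle''$ if and only if $\lambda(g_*)\in\lambda(G_0\cap\tilde{P})$; letting $u'$ range over a fibre of $q_1$, surjectivity of $q_2|_{\Gbdle''}$ is equivalent to $\lambda(G_0\cap\tilde{P})=\lambda(G_0)$, i.e.\ (up to connected components) to $\lambda'|_{\g_0\cap\tilde{\p}}\neq 0$. This is not among the hypotheses of the proposition: the compatibility $B_{\g}(E_{\lambda},\cdot)=c\,B_{\tilde{\g}}(E_{\tilde{\lambda}},\cdot)$ on $\g$ does not force $E_{\lambda}$, or any direction on which $\lambda'$ is nonzero, to lie in $\g_0\cap\tilde{\p}$. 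If $\g_0\cap\tilde{\p}\subseteq\ker\lambda'$, then $\tilde{\Gbdle}''$ lies over a proper subset of $\tilde{M}$ of positive codimension and is not a reduction, so the proof breaks at this step.

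The step can be repaired in two ways, neither needing extra hypotheses. Following the paper, argue on holonomy directly: a loop $\tilde{\gamma}$ in $\tilde{M}$ lifts along the principal $(G_0\cap\tilde{P})$-bundle $\Gbdle_0\rightarrow\tilde{M}$ to a path ending at $u(0)h$, $h\in G_0\cap\tilde{P}$, and comparing parallel transports via the one-form identity and the relation $\lambda=\tilde{\lambda}^c$ (valid on the identity component of $G_0\cap\tilde{P}$ by the infinitesimal compatibility) gives $\mathrm{hol}_{\eta^{\tilde{\sigma}}}(\tilde{\gamma})^c=\mathrm{hol}_{\eta^{\sigma}}(p\circ\tilde{\gamma})$; since $x^c=1$ has only the solution $x=1$ in $\R^+$, trivial holonomy passes upstairs. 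Alternatively, pull back the section itself rather than the reduction: $[(u,t)]\mapsto[(u,t^{1/c})]$ is a well-defined map $p^*\mathcal{L}^{\lambda}\rightarrow\mathcal{L}^{\tilde{\lambda}}$ of $\R^+$-bundles over $\tilde{M}$ (both are associated to $\Gbdle_0\rightarrow\tilde{M}$, via $\lambda$ and $\tilde{\lambda}$ restricted to $G_0\cap\tilde{P}$), and it carries the $\eta^{\sigma}$-parallel section of $\mathcal{L}^{\lambda}$ to an $\eta^{\tilde{\sigma}}$-parallel section of $\mathcal{L}^{\tilde{\lambda}}$. Your saturation construction is salvageable only under the additional assumption $\lambda(G_0\cap\tilde{P})=\lambda(G_0)$ --- which does hold in the paper's qc and CR applications, where $E_{\lambda}$ is the grading element and lies in $\g_0\cap\tilde{\p}$ --- but as written it proves less than the proposition asserts.
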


\begin{proof} Denote the submersion defining the manifold $\tilde{M}$ by $p:\Gbdle \rightarrow \tilde{M} := \Gbdle/(G \cap \tilde{P})$, and the resulting submersion $\bar{p}: \tilde{M} \rightarrow M$. Direct from the definitions, we have $\pi = \bar{p} \circ p: \Gbdle \rightarrow M$. We claim first of all that there is a unique submersion $p_0:\Gbdle_0 \rightarrow \tilde{M}$ such that $p_0 \circ \pi_+ = p: \Gbdle \rightarrow \tilde{M}$ (and hence it follows that $\bar{p} \circ p_0 = \pi_0:\Gbdle_0 \rightarrow M$). To see the claim, note that $G \cap \tilde{P} = P \cap \tilde{P}$ (since $P \supseteq G \cap \tilde{P}$), and from the condition $P_+ \subset \tilde{P}$ it follows that $P \cap \tilde{P} = (G_0 \cap \tilde{P})\ltimes P_+$. Hence $\tilde{M} := \Gbdle/(G \cap \tilde{P}) = \Gbdle_0/(G_0 \cap \tilde{P})$, identifying $\Gbdle_0$ as a $(G_0 \cap \tilde{P})$-principal bundle over $\tilde{M}$.\\

\no Now, by the requirement that $G_0 \cap \tilde{P} \subseteq \tilde{G}_0$, we get a natural inclusion $\iota_0 : \Gbdle_0 \hookrightarrow \Gbdle_0 \times_{G_0 \cap \tilde{P}} \tilde{G}_0$, of $\Gbdle_0$ into a $\tilde{G}_0$ principal bundle over $\tilde{M}$, the latter of which we denote by $\tilde{\pi}'_0: \tilde{\Gbdle}'_0 \rightarrow \tilde{M}$. From the definition of $\tilde{\Gbdle} := \Gbdle \times_{G \cap \tilde{P}} \tilde{P}$, we have a natural inclusion $\iota: \Gbdle \hookrightarrow \tilde{\Gbdle}$, and a $\tilde{G}_0$-equivariant submersion $\tilde{\pi}'_+ : \tilde{\Gbdle} \rightarrow \tilde{\Gbdle}'_0$ is uniquely defined by requiring $\tilde{\pi}_+ \circ \iota = \iota_0 \circ \pi_+$. Since $\tilde{\pi}'_0 \circ \tilde{\pi}'_+ = \tilde{\pi}: \tilde{\Gbdle} \rightarrow \tilde{M}$, by general considerations one sees that $\tilde{\pi}'_+ : \tilde{\Gbdle} \rightarrow \tilde{\Gbdle}'_0$ is a $\tilde{P}_+$ principal bundle, and that $\tilde{\Gbdle}'_0 \isom \tilde{\Gbdle}_0$. We get a uniquely determined, $\tilde{G}_0$-equivariant section $\tilde{\sigma}: \tilde{\Gbdle}'_0 \rightarrow \tilde{\Gbdle}$ by requiring that $\tilde{\sigma} \circ \iota_0 = \iota \circ \sigma$.\\

\no To see the final claim, let $u \in \Gbdle_0$ be an arbitrary point and $\xi \in T_u\Gbdle_0$. Then we have
\begin{align*}
(\lambda_* \circ \sigma^* \omega_0) (\xi) &= B_{\g}(E_{\lambda},\omega(\sigma_*(\xi))) = c \cdot B_{\tilde{\g}}(E_{\tilde{\lambda}},\tilde{\omega}(\iota_*(\sigma_*(\xi))))\\
&= c \cdot B_{\tilde{\g}}(E_{\tilde{\lambda}},\tilde{\omega}(\tilde{\sigma}_*((\iota_0)_*(\xi)))) = c \cdot (\iota_0)^*(\tilde{\lambda}_* \circ \tilde{\sigma}^* \tilde{\omega}_0)(\xi).
\end{align*}
\no This shows that the 1-form $\lambda_* \circ (\sigma^* \omega_0)$ on $\Gbdle_0$ which induces the connection 1-form $\eta^{\sigma}$ on the scale bundle $\mathcal{E}^{\lambda} = \Gbdle_0 \times_{\lambda} \R^*$, is up to a constant given by the pull-back of the 1-form on $\tilde{\Gbdle}$ inducing $\eta^{\tilde{\sigma}}$ on $\mathcal{E}^{\tilde{\lambda}}$. From this, it follows that $\eta^{\tilde{\sigma}}$ is flat (resp. has trivial holonomy) if $\eta^{\sigma}$ is.
\end{proof}

Now let us indicate how proposition \ref{Fefferman Weyl structures} leads, for the qc Fefferman space $(F_{qc},[f_{qc}])$ of a qc manifold $(M,\Dbdle,\mathbb{Q},[g])$, to a formula for certain metrics $f_g \in [f_{qc}]$ determined by a choice $g \in [g]$ and the qc Weyl connection associated to $g$.

\begin{Proposition}
Let $(M,\Dbdle,\mathbb{Q},[g])$ be a qc manifold of dimension $4n+3$ and $(F_{qc},[f_{qc}])$ its qc Fefferman space. For a choice of $g \in [g]$ and $\sigma^g : \Gbdle^{qc}_0 \rightarrow \Gbdle^{qc}$ the exact Weyl structure it determines, let $\overline{\sigma^{g}}: \overline{\Gbdle^{qc}}_0 \rightarrow \overline{\Gbdle^{qc}}$ denote the exact Weyl structure of the qc Fefferman space given by proposition \ref{Fefferman Weyl structures}. Then the metric $f_g \in [f_{qc}]$ corresponding to $\overline{\sigma^g}$ has the form:
\begin{align}
f_g = p^*\overline{g} - 2\sum_{s=1}^3 p^*\eta^s \sym (A^{\sigma^g}_s), \label{Feff metric algebraic formula}
\end{align}
\no where $p: F_{qc} \rightarrow M$ is the submersion given by the Fefferman construction, $\overline{g}$ is the extension of $g$ by zero on the complement of $\Dbdle$ determined by $g$, $\{\eta^s\}$ is any local choice of qc contact form, $\sym$ is the symmetric product, and $A^{\sigma^g} \in \Omega^1(F_{qc},\sp(1))$ is the principal connection induced from the qc Weyl connection $(\sigma^g)^*\omega^{qc}_0 \in \Omega^1(\Gbdle^{qc}_0,\g^{qc}_0)$ by projecting onto the $\sp(1)$-component of $\g^{qc}_0$, with the components $\{A^{\sigma^g}_s\}_{s=1,2,3}$ naturally induced by $\{\eta^s\}$.
\end{Proposition}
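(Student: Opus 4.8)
```latex
\textbf{Proof proposal.}

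The plan is to compute the pulled-back Cartan connection $(\overline{\sigma^g})^*\overline{\omega^{qc}}$ and read off the metric $f_g$ directly from its soldering-form and negative components, using the general recipe relating a Weyl structure downstairs to its induced Weyl structure upstairs from Proposition \ref{Fefferman Weyl structures}. The key structural input is that $f_g$, as a conformal metric associated to an exact Weyl structure of $(\overline{\Gbdle^{qc}},\overline{\omega^{qc}})$, is obtained by pairing the $\tilde{\g}_{-1}$-valued soldering form of $\overline{\sigma^g}$ with itself via the $\tilde{G}_0$-invariant inner product on $\tilde{\g}_{-1} \isom \R^{4n+3,3}$ coming from $Q_{4n+4,4}$. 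So the first step is to identify, at a point $u \in \Gbdle^{qc}_0 \hookrightarrow \overline{\Gbdle^{qc}}_0$, the tangent space $T_uF_{qc}$ together with the splitting into the horizontal lift of $TM$ (equivalently, $\g^{qc}_-\isom\mathrm{Im}(\H)\dsum\H^n$) and the vertical directions along the fiber $(\tilde{P}\cap G^{qc})/P^{qc}\isom S^3$ or $SO(3)$, which are identified with the $\sp(1)$-part of $\g^{qc}_0$.

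First I would use the matrix realizations from (\ref{matrix form}) and Appendix A to write the inclusion $\g^{qc}\hookrightarrow\g^{co}$ explicitly at the graded level, so as to locate how the grading of $\g^{co}=\tilde\g_{-1}\dsum\tilde\g_0\dsum\tilde\g_1$ restricts along $\varphi$. The decisive observation is that under the Fefferman inclusion the conformal $\tilde\g_{-1}$ receives contributions both from $\g^{qc}_{-1}=\H^n$ and from $\g^{qc}_{-2}=\mathrm{Im}(\H)$ together with the fiber directions $\sp(1)\subset\g^{qc}_0$: the three $\mathrm{Im}(\H)$-directions pair, via the conformal inner product, with the three $\sp(1)$-directions. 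This is exactly the algebraic source of the cross-term $-2\sum_s p^*\eta^s\sym(A^{\sigma^g}_s)$. The second step is then to evaluate the $\tilde{G}_0$-invariant quadratic form $Q_{4n+4,4}$ on $\overline{\sigma^g}{}^*\overline{\omega^{qc}}_-$: the $\H^n$-block contributes $p^*\overline{g}$ (the extension of $g$ by zero off $\Dbdle$), while the coupling between the $\mathrm{Im}(\H)$-block and the $\sp(1)$-block contributes the symmetric products $p^*\eta^s\sym A^{\sigma^g}_s$. By Proposition \ref{Fefferman Weyl structures} the induced Weyl structure $\overline{\sigma^g}$ is characterized by $\overline{\sigma^g}\circ\iota_0=\iota\circ\sigma^g$, so that on the subbundle $\Gbdle^{qc}_0$ the connection form $\overline{\sigma^g}{}^*\overline{\omega^{qc}}_0$ restricts to $(\sigma^g)^*\omega^{qc}_0$; projecting the latter onto its $\sp(1)$-component yields precisely $A^{\sigma^g}$, and its decomposition along a local qc contact form $\{\eta^s\}$ gives the components $A^{\sigma^g}_s$.

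The third step is to check that the scale normalizations match, i.e. that the exact Weyl structure $\sigma^g$ corresponding to $g\in[g]$ really produces the representative $f_g$ with $p^*\overline{g}$ (rather than some conformal multiple); this is where the compatibility hypothesis $B_{\g}(E_\lambda,X)=c\,B_{\tilde\g}(E_{\tilde\lambda},X)$ of Proposition \ref{Fefferman Weyl structures} is used, ensuring that the bundle of scales of $M$ (metrics on $\Dbdle$ in $[g]$) maps compatibly to the bundle of scales of $F_{qc}$ (metrics in $[f_{qc}]$), so that the exact section $g$ upstairs determines the exact section $f_g$.

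The main obstacle I expect is the bookkeeping in the second step: organizing the matrix entries of (\ref{matrix form}) so that the pairing of $\varphi_{-2}(\mathrm{Im}\,\H)$ with the fiber $\sp(1)$ under $Q_{4n+4,4}$ is seen to be nondegenerate and produces exactly the factor $-2$ in $-2\sum_s p^*\eta^s\sym(A^{\sigma^g}_s)$, with no spurious $\mathrm{Im}(\H)\times\mathrm{Im}(\H)$ term surviving. Getting the sign and the numerical coefficient right requires careful tracking of the identifications $\g^{qc}_{-2}\isom\mathrm{Im}(\H)$, $\g^{qc}_1\isom(\g^{qc}_{-1})^*$, $\g^{qc}_2\isom(\g^{qc}_{-2})^*$ through the Killing-form dualities, but it is otherwise a direct, if lengthy, linear-algebra computation; note that the curvature-dependent term $\frac{\scal}{32n(n+2)}p^*\eta^s$ in (\ref{qc Fefferman metric}) does not appear here because it is encoded in the Rho-tensor / curvature of the qc Weyl connection, whose explicit form is supplied separately in \cite{alt2} and only enters when comparing $f_g$ to Biquard's metric (\ref{qc Fefferman metric}).
```
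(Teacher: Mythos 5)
Your proposal is correct and follows essentially the same route as the paper: recover $f_g$ by pairing the $\g^{co}_-$-component of the pulled-back Cartan connection with itself via the invariant quadratic form (the paper's trace formula on the $O(4n+3,3)$- and $Sp(1)Sp(n)$-reductions), use the graded form of $\varphi_{qc}^{co}$ from Appendix A to see that $\g^{co}_{-1}$ is spanned by $\varphi_-(\g^{qc}_-)$ together with $\varphi_-(I),\varphi_-(J),\varphi_-(K)$ for $I,J,K\in\sp(1)\subset\g^{qc}_0$, and then the $\H^n$-block pairing gives $p^*\overline{g}$ while the $\mathrm{Im}(\H)$–$\sp(1)$ cross-pairings (each equal to $-2$, all others vanishing) produce $-2\sum_s p^*\eta^s\sym A^{\sigma^g}_s$. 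Your identification of $A^{\sigma^g}$ via $\overline{\sigma^g}\circ\iota_0=\iota\circ\sigma^g$, the scale-normalization step via the compatible scaling elements, and the remark that the $\scal$-term only enters when comparing with Biquard's formula through the qc Weyl connection of \cite{alt2} all match the paper's argument.
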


\begin{proof}
In general, given an exact Weyl structure $\overline{\sigma}: \overline{\Gbdle}_0 \rightarrow \overline{\Gbdle}$ of a Cartan geometry of conformal type, the corresponding metric $f^{\sigma^{co}} \in [f]$ in the conformal class induced by $(\overline{\Gbdle},\overline{\omega})$ may be recovered as follows. Since $\overline{\sigma}$ is exact, we have a further reduction (via holonomy of the Weyl connection) $\overline{\sigma}_{\overline{\lambda}} : \overline{\Gbdle}_{\overline{\lambda}} \rightarrow \overline{\Gbdle}$ to structure group $\mathrm{Ker}(\overline{\lambda}) = O(4n+3,3) \subset G^{co}_0$. Then for $x \in F$ and $v,w \in T_xF$, consider a point $u \in \overline{\Gbdle}_{\overline{\lambda}}$ in the fiber of $x$ and any vectors $\tilde{v},\tilde{w} \in T_u\overline{\Gbdle}_{\overline{\lambda}}$ projecting to $v,w$, respectively. Then we have
\begin{align*}
f^{\overline{\sigma}}(v,w) = \frac{1}{2}\mathrm{tr}((1,Q_{4n+3,3}) \circ \overline{\omega}_-((\overline{\sigma}_{\overline{\lambda}})_*\tilde{v}) \circ (1,Q_{4n+3,3}) \circ (\overline{\omega}_-((\overline{\sigma}_{\overline{\lambda}})_*\tilde{w}))^t).
\end{align*}
\no Now consider the case where the Cartan geometry of conformal type is a qc Fefferman space $(\overline{\Gbdle^{qc}} \rightarrow F_{qc},\overline{\omega^{qc}})$ and $\overline{\sigma} = \overline{\sigma^g}$ is the exact Weyl structure induced via proposition \ref{Fefferman Weyl structures} from an exact Weyl structure $\sigma^g$ of the qc manifold $(M,\Dbdle,\mathbb{Q},[g])$ corresponding to a choice of metric $g \in [g]$. Then as in the conformal case, we have a further reduction (via holonomy of the qc Weyl connection) to structure group $\mathrm{Ker}(\lambda) = Sp(1)Sp(n) \subset G^{qc}_0$, which we denote by $$\sigma^g_{\lambda}  = \sigma^g \circ r_{\lambda}: \Gbdle^{qc}_{\lambda} \rightarrow \Gbdle^{qc}.$$ From the proof of the final statement of proposition \ref{Fefferman Weyl structures}, we have that $\iota_0 \circ r_{\lambda}: \Gbdle^{qc}_{\lambda} \hookrightarrow \overline{\Gbdle^{qc}}_{\overline{\lambda}}$, and clearly any vector on $F_{qc}$ is locally given by the projection of vectors on $\overline{\Gbdle^{qc}}_{\overline{\lambda}}$ in the image of the differential of this inclusion, so it suffices to determine the form of $f^g := f^{\overline{\sigma^g}} \in [f_{qc}]$ via these vectors.\\

\no From the defining relations of the Weyl structure $\overline{\sigma^g}$ and the Cartan connection $\overline{\omega^{qc}}$ via $\sigma^g$ and $\omega^{qc}$, respectively, we compute: $$\overline{\omega^{qc}}_- \circ (\overline{\sigma^g} \circ \iota_0 \circ r_{\lambda})_* = \mathrm{pr}_{\g^{co}_-} \circ \omega^{qc} \circ (\sigma^g \circ r_{\lambda})_*.$$ Now, for a local basis $\{e_1,\ldots,e_{4n}\}$ of $\Dbdle$ and Reeb vector fields $\xi_1,\xi_2,\xi_3$ corresponding to a local qc contact form $\{\eta^s\}$ compatible with $g$ and with $\{I_s\}$ via (\ref{qc condition}), consider the horizontal lifts $e_a^*$ and $\xi_s^*$ to vectors on $\Gbdle^{qc}_{\lambda}$ via the principal connection $(\sigma^g_{\lambda})^*\omega^{qc}_0$. Then we have $(\sigma^g_{\lambda})^*\omega^{qc}_{\leq 0}(e_a^*) = (\sigma^g_{\lambda})^*\omega^{qc}_{-1}(e_a^*) =: X_a \in \g^{qc}_{-1}$ and $(\sigma^g_{\lambda})^*\omega^{qc}_{\leq 0}(\xi_s^*) = (\sigma^g_{\lambda})^*\omega^{qc}_{-2}(\xi_s^*) = i_s \in \g^{qc}_{-2}$, where $i_1 = i, i_2 =j, i_3 =k \in \mathrm{Im}(\H)$ and we use the notation of (\ref{matrix form}). Furthermore, we have a similar formula as in the conformal case relating $g$ and $\sigma^g$ for vectors $v,w \in \Dbdle$: $$g(v,w) = \frac{1}{2}\mathrm{tr}_{\R}(\omega^{qc}_{-1}((\sigma^g_{\lambda})_*v^*) \circ \overline{(\omega^{qc}_{-1}((\sigma^g_{\lambda})_*w^*))^t}).$$ (Details are given in \cite{alt2}.)\\

\no Letting $I := (i,0), J := (j,0), K := (k,0) \in \mathrm{Ker}(\lambda_*) \subset \g^{qc}_0$, and $\varphi = \varphi_{qc}^{co}: \g^{qc} \hookrightarrow \g^{co}$ as defined in appendix A, we have $\g^{co}_- = \ll \varphi_-(I),\varphi_-(J),\varphi_-(K),\varphi_-(\g^{qc}_-) \gg$. It follows that under the differential of $\iota_0 \circ r_{\lambda}$, the horizontal lifts $\{e_a^*,\xi_s^*\}$ together with the fundamental vector fields $\tilde{I},\tilde{J},\tilde{K} \in \mathfrak{X}(\Gbdle^{qc}_{\lambda})$ locally provide a basis which spans $TF_{qc}$. Now the form of the metric $f^g$ claimed in the propositon follows from the following algebraic identities for the inclusion $\varphi$, which may be calculated using the information given in the appendix: For $x,y \in \g^{qc}_{-1}, i,j,k \in \g^{qc}_{-2}$ and $I,J,K \in \g^{qc}_0$ as in (\ref{matrix form}), we have the following identites for pairings (and all others vanish):
\begin{align*}
\mathrm{tr}((1,Q_{4n+3,3}) \circ \varphi_-(x) \circ (1,Q_{4n+3,3}) \circ (\varphi_-(y))^t) &= \mathrm{tr}_{\R}(x \circ \overline{y}^t); \\
\mathrm{tr}((1,Q_{4n+3,3}) \circ \varphi_-(i) \circ (1,Q_{4n+3,3}) \circ (\varphi_-(I))^t) &= \mathrm{tr}((1,Q_{4n+3,3}) \circ \varphi_-(j) \circ (1,Q_{4n+3,3}) \circ (\varphi_-(J))^t)\\
&= \mathrm{tr}((1,Q_{4n+3,3}) \circ \varphi_-(k) \circ (1,Q_{4n+3,3}) \circ (\varphi_-(K))^t) \\
&= -2.
\end{align*}
\end{proof}

Finally, we have the following formula for the qc Weyl connection $(\sigma^g)^*\omega^{qc}_0$ (or the affine connection $\nabla^{qc}$ it induces), derived in \cite{alt2}:

\begin{Proposition} \label{QC Weyl connection} The Weyl connection $\nabla^{qc}$ of a qc manifold $(M,\Dbdle,\mathbb{Q},[g])$ with respect to a fixed metric $g \in [g]$ and a choice of local qc contact form $\eta = (\eta^1,\eta^2,\eta^3)$ with corresponding complex structures $I_1,I_2,I_3$, is given by $\nabla^{qc} = \nabla^{B} + \alpha^{qc}$, where $\nabla^{B}$ is the Biquard connection of $g$, and $\alpha^{qc} \in \Gamma(\Vbdle^* \tens \mathrm{End}(\Dbdle))$ is given by
\begin{align}
\alpha^{qc}(\xi_r) &= \frac{1}{4}(I_r \circ (T^0)^{\sharp} + (T^0)^{\sharp} \circ I_r) + \frac{\scal}{32n(n+2)} I_r. \label{QC Weyl correction}
\end{align}
\end{Proposition}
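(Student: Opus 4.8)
The plan is to treat both $\nabla^{qc}$ and $\nabla^B$ as principal connections for one and the same reduction of the frame bundle of $\Dbdle$, so that their difference is forced to be a tensor, and then to pin that tensor down by contrasting the two normalization conventions singling out each connection. First I would note that the choice of $g \in [g]$ together with the exact Weyl structure $\sigma^g$ reduces the structure group to $\mathrm{Ker}(\lambda) = Sp(1)Sp(n)$, and that Biquard's connection $\nabla^B$ -- preserving $\Dbdle$, the metric $g$, and the sphere bundle $\mathbb{Q}$ -- is itself a connection for this reduction. Hence $\alpha^{qc} := \nabla^{qc} - \nabla^B$ is a tensor in $\Gamma(T^*M \tens \mathrm{End}(\Dbdle))$ whose values lie in $\sp(1) \dsum \sp(n)$, i.e. are skew for $g$ and compatible with $\mathbb{Q}$; the entire content of the proposition is the explicit identification of this tensor.

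The second and central step is to compute the torsions of the two connections. Biquard's torsion $T^B$ is prescribed by his construction; its component along $\Vbdle \times \Dbdle$ is encoded by the torsion endomorphisms $T_{\xi_r} = T^B(\xi_r,\cdot)\vert_{\Dbdle}$, whose $Sp(1)Sp(n)$-decomposition isolates the distinguished piece $T^0$ occurring in (\ref{QC Weyl correction}). The torsion of the Weyl connection $\nabla^{qc}$, on the other hand, is governed by the normality condition $\codiff \circ \k^{qc} = 0$: using the Weyl-structure splitting of the normal Cartan connection from \cite{CS03}, together with the description of the harmonic curvature in homogeneity $2$ recalled in Example \ref{qc parabolics}, one translates $\codiff \circ \k^{qc} = 0$ into an explicit tensorial expression for the torsion $T^{qc}$ in terms of the underlying qc data and the qc-Rho tensor. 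Since $T^{qc}(X,Y) - T^B(X,Y) = \alpha^{qc}(X)Y - \alpha^{qc}(Y)X$ and both connections are metric and $\mathbb{Q}$-compatible, a Koszul-type argument recovers $\alpha^{qc}$ uniquely from the torsion difference.

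I would then read off the structure of $\alpha^{qc}$. On $\Dbdle$ the horizontal torsions of the two connections agree, since there both are determined by the $Sp(1)Sp(n)$-structure and the common Levi form (recall that $\omega^{qc}$ is torsion-free and carries no harmonic curvature in homogeneity $1$); hence the $\Dbdle^*$-component of $\alpha^{qc}$ vanishes and $\alpha^{qc}$ reduces to a section of $\Vbdle^* \tens \mathrm{End}(\Dbdle)$, exactly as asserted. Evaluating the Koszul formula on the Reeb directions, the skew contribution of the torsion difference yields the anticommutator term $\frac{1}{4}(I_r \circ (T^0)^{\sharp} + (T^0)^{\sharp} \circ I_r)$, while the trace part -- where normality forces a pure multiple of each $I_r$ in the $\sp(1)$-direction -- yields the curvature term $\frac{\scal}{32n(n+2)} I_r$.

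The hard part will be the second step: extracting the precise tensorial form of $T^{qc}$, and in particular the exact coefficient in the scalar-curvature term, from the abstract normalization. This amounts to carrying the codifferential (\ref{codiff formula}) through the $Sp(1)Sp(n)$-representation theory of $\g^{qc}$, tracking which irreducible components of $\alt^2(\g^{qc}_-)^* \tens \g^{qc}$ survive and matching them against Biquard's torsion decomposition; the scalar curvature enters through the trace part of the qc-Rho tensor, and fixing its constant $\frac{1}{32n(n+2)}$ (with the dimension count $\dim \Dbdle = 4n$) is the delicate endpoint of the computation, carried out in \cite{alt2}.
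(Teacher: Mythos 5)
The first thing to observe is that the paper contains no proof of this proposition at all: it is introduced with the words ``derived in \cite{alt2}'' and functions in this paper purely as an external input, which is combined with proposition \ref{Fefferman Weyl structures} and the formula (\ref{Feff metric algebraic formula}) to yield the equivalence of (i) and (iv) in theorem A. So there is no in-paper argument to compare your proposal against; it can only be judged as an attempt at an independent proof.

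Judged that way, it has a genuine gap, and you in fact name it yourself: every quantitative claim of the proposition lives in the step you defer. The formal part of your outline is sound --- $\alpha^{qc} := \nabla^{qc} - \nabla^{B}$ is a $g$-skew, $\mathbb{Q}$-compatible $\mathrm{End}(\Dbdle)$-valued $1$-form; agreement of the two torsions on $\Dbdle \times \Dbdle$ (which holds because $\omega^{qc}$ is torsion-free, so both torsions there reduce to the Levi bracket) kills the $\Dbdle^*$-component by the usual Levi-Civita symmetrization argument; and then the vertical component is literally the difference of vertical torsions, $\alpha^{qc}(\xi_r) = T^{qc}_{\xi_r} - T^{B}_{\xi_r}$. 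But this only reduces the proposition to the statement that needs proving: an explicit computation of $T^{qc}_{\xi_r}$ from the normalization $\codiff \circ \k^{qc} = 0$, matched against Biquard's decomposition of $T^{B}_{\xi_r}$ into $T^0$ and its $\sp(1)\dsum\sp(n)$-parts, producing the coefficients $\tfrac14$ and $\tfrac{\scal}{32n(n+2)}$. Your phrases ``one translates $\codiff\circ\k^{qc}=0$ into an explicit tensorial expression'' and ``the trace part \dots yields the curvature term'' assert the answer rather than derive it, and at exactly this point you cite \cite{alt2} --- which is precisely what the paper does. An argument that outsources its decisive computation to the reference being reproduced is a restatement of the paper's citation, not a proof. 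A further point your outline passes over silently: the Weyl structure $\sigma^g$ induces its own splitting of $TM \supset \Dbdle$ via $TM \isom \mathrm{Gr}(TM)$, and before you may compare $\nabla^{qc}$ and $\nabla^{B}$ as affine connections on $TM$ and equate ``vertical'' torsions evaluated on the Reeb fields $\xi_r$, you must show that this Weyl-theoretic complement of $\Dbdle$ coincides with the span of Biquard's Reeb fields; otherwise the torsion comparison acquires correction terms coming from the mismatch of splittings, and the right-hand side of (\ref{QC Weyl correction}) is being evaluated on the wrong lifts.
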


In the above formula, $T^0$ is a symmetric $(0,2)$-tensor associated to $g \in [g]$, defined in \cite{IMV07}. The precise formula is not important for the present purposes, but only that the endomorphism $I_r \circ (T^0)^{\sharp} + (T^0)^{\sharp} \circ I_r \in \mathrm{End}(\Dbdle)$ commutes with $I_1,I_2,I_3$. From this and (\ref{Feff metric algebraic formula}), we get as a corollary the formula (\ref{qc Fefferman metric}) for the metric $f^g \in [f_{qc}]$ induced by a choice of metric $g \in [g]$, and in particular the equivalence of (i) and (iv) in theorem A. Comparing this with the formula in theorem II.6.1 of \cite{B00} gives the equivalence to the conformal metrics defined directly by Biquard (note that the discrepancy by a factor of 2 is simply a result of different conventions chosen in the defining formula (\ref{qc condition})).

\begin{appendix}

\section{The graded inclusions}

Consider the inclusions, for any $N \geq 1$, given as follows:
\begin{align*}
\iota_{\H}: \gl(N,\H) &\hookrightarrow \gl(2N,\C)\\
\iota_{\H}: U + jV &\mapsto \left(\begin{array}{cc} U & -\overline{V}\\ V & \overline{U} \end{array}\right),
\end{align*}
\no where $U + jV$ is the decomposition of an arbitrary $N \times N$ quaternionic matrix in terms of $N \times N$ complex matrices $U$ and $V$. Similarly, we have
\begin{align*}
\iota_{\C}: \gl(N,\C) &\hookrightarrow \gl(2N,\R)\\
\iota_{\C}: A + iB &\mapsto \left(\begin{array}{cc} A & -B \\ B & A\end{array}\right).
\end{align*}

Then it is well-known that these restrict to give inclusions $\iota_{\H}: \sp(n+1,1) \hookrightarrow \su(2n+2,2)$ and $\iota_{\C}: \su(2n+2,2) \hookrightarrow \so(4n+4,4)$, which are of course isomorphic to the Lie algebras we're interested in. However, we want to see the explicit structure of the inclusions $\g^{qc} \subset \g^{cr} \subset \g^{co}$ as \emph{graded} algebras, and for this it's simplest to consider the composition of the inclusions $\iota_{\H}$ and $\iota_{\C}$, respectively, followed by automorphisms corresponding to a certain change of basis. In the following, we'll provide these details for the inclusion $\g^{qc} \subset \g^{cr}$, those for the inclusion $\g^{cr} \subset \g^{co}$ being completely analogous.\\

Recall the definitions $\g^{qc} = \sp(\H^{n+2},Q_{n+1,1})$ and $\g^{cr} = \su(\C^{2n+4},Q_{2n+2,2})$. In particular, this means that with respect to $Q_{n+1,1}$ and $Q_{2n+2,2}$, respectively, the standard ordered bases $\{d_1,d_2,\ldots,d_n,d_{n+2}\}$ of $\H^{n+2}$ and $\{e_1,\ldots,e_{2n+4}\}$ of $\C^{2n+4}$, respectively, are \emph{(modified) Witt bases}. That is, the vectors at the beginning and end of the ordered bases are light-like and dual to one another (working inward), and the vectors in the middle are orthonormal, and perpendicular to all the light-like basis vectors.\\

From this, we see that we get an inclusion $\varphi_{qc}^{cr}: \g^{qc} \hookrightarrow \g^{cr}$ by letting $\varphi_{qc}^{cr} = a(\Phi) \circ \iota_{\H}$, where $a(\Phi)$ denotes conjugation by the transformation matrix $\Phi$ sending the standard basis of $\C^{2n+4}$ to a modified Witt basis. With a little calculation, it is not hard to see that the following transformation matrix $\Phi$ will work:
$$\Phi = \left(\begin{array}{cc}
            \left(\begin{array}{ccc} 1 & 0 & 0\\ 0 & 0 & 0\\ 0 & I_n & 0 \end{array}\right)&
            \left(\begin{array}{ccc} 0 & 0 & 0\\ 1 & 0 & 0\\ 0 & 0 & 0 \end{array}\right)\\
            \left(\begin{array}{ccc} 0 & 0 & 0\\ 0 & 0 & 0\\ 0 & 0 & 1 \end{array}\right)&
            \left(\begin{array}{ccc} 0 & I_n & 0\\ 0 & 0 & 1\\ 0 & 0 & 0 \end{array}\right)
            \end{array}\right).$$
Then $\Phi^{-1} = \Phi^t$, and calculating, for example, the image under $\varphi_{qc}^{cr}$ of an element $x = u + jv \in \H^{n} \isom \g^{qc}_{-1}$, we get:
$$\Phi \circ \iota_{\H}(x) \circ \Phi^{-1} = \left(\begin{array}{cc}
            \left(\begin{array}{ccc} 0 & 0 & 0\\ 0 & 0 & 0\\ u & -\overline{v} & 0 \end{array}\right)&
            \left(\begin{array}{ccc} 0 & 0 & 0\\ 0 & 0 & 0\\ 0 & 0 & 0 \end{array}\right)\\
            \left(\begin{array}{ccc} v & \overline{u} & 0\\ 0 & 0 & -v^t\\ 0 & 0 & -\overline{u}^t \end{array}\right)&
            \left(\begin{array}{ccc} 0 & 0 & 0\\ -u^t & 0 & 0\\ \overline{v}^t & 0 & 0 \end{array}\right)
            \end{array}\right).$$
Similar forms can be computed for the elements of $\varphi_{qc}^{cr}(\g^{qc}_{+1})$, and one verifies directly that they split, for all $z \in \g^{qc}_{+1}, x \in \g^{qc}_{-1}$ as $\varphi_{qc}^{cr}(x) = (\varphi_{qc}^{cr})_{-1}(x) + (\varphi_{qc}^{cr})_0(x)$ and $\varphi_{qc}^{cr}(z) = (\varphi_{qc}^{cr})_0(z) + (\varphi_{qc}^{cr})_1(z)$, and they satisfy: $B_{\g^{cr}}((\varphi_{qc}^{cr})_{-1}(x),(\varphi_{qc}^{cr})_{1}(z)) = B_{\g^{cr}}((\varphi_{qc}^{cr})_0(x),(\varphi_{qc}^{cr})_0(z))$.\\

Writing any element $q = a + j b \in \mathrm{Im}(\H) \isom \g^{qc}_{-2}$ with $a \in \mathrm{Im}(\C), b \in \C$, we can likewise calculate:
$$\Phi \circ \iota_{\H}(q) \circ \Phi^{-1} = \left(\begin{array}{cc}
            \left(\begin{array}{ccc} 0 & 0 & 0\\ 0 & 0 & 0\\ 0 & 0 & 0 \end{array}\right)&
            \left(\begin{array}{ccc} 0 & 0 & 0\\ 0 & 0 & 0\\ 0 & 0 & 0 \end{array}\right)\\
            \left(\begin{array}{ccc} 0 & 0 & 0\\ b & \overline{a} & 0\\ a & -\overline{b} & 0 \end{array}\right)&
            \left(\begin{array}{ccc} 0 & 0 & 0\\ 0 & 0 & 0\\ 0 & 0 & 0 \end{array}\right)
            \end{array}\right).$$
In particular, we see that considering the elements $i,j,k \in \g^{qc}_{-2}$, we get $\varphi_{qc}^{cr}(i) = (\varphi_{qc}^{cr})_{-2}(i) + (\varphi_{qc}^{cr})_0(i)$ and $\varphi_{qc}^{cr}(j) = (\varphi_{qc}^{cr})_{-1}(j), \varphi_{qc}^{cr}(k) = (\varphi_{qc}^{cr})_{-1}(k)$. And corresponding splittings are seen for $\varphi_{qc}^{cr}(p)$, with $p \in \mathrm{Im}(\H) \isom \g^{qc}_2$, giving the identities: $B_{\g^{cr}}((\varphi_{qc}^{cr})_{-2}(i),(\varphi_{qc}^{cr})_2(i)) = B_{\g^{cr}}((\varphi_{qc}^{cr})_0(i),(\varphi_{qc}^{cr})_0(i))$. The remaining assumptions for lemmas \ref{del1 identity} and \ref{del2 identity}, and for proposition \ref{Fefferman Weyl structures}, are straightfroward, if tedious, to verify.\\

Likewise, we define $\varphi_{cr}^{co}: \g^{cr} \hookrightarrow \g^{co}$ by $\varphi_{cr}^{co} = a(\Phi) \circ \iota_{\C}$, and similar identities may be computed, and the hypotheses of these lemmas verified. Finally, let $\varphi_{qc}^{co} = \varphi_{cr}^{co} \circ \varphi_{qc}^{cr}$.

\end{appendix}

\no \begin{small}SCHOOL OF MATHEMATICS, UNIVERSITY OF THE WITWATERSRAND, P O WITS 2050, JOHANNESBURG, SOUTH AFRICA.\end{small}\\
\no E-mail: \verb"jesse.alt@wits.ac.za"

\end{document}